\definecolor{LinkColor}{rgb}{0,0,1}
\definecolor{LinkColor2}{rgb}{1,0,0}
\definecolor{lbcolor}{rgb}{0.85,0.85,0.85}
\definecolor{FrameColor}{rgb}{0.85,0.85,0.85}
\newcommand*\patchAmsMathEnvironmentForLineno[1]{%
	\expandafter\let\csname old#1\expandafter\endcsname\csname #1\endcsname
	\expandafter\let\csname oldend#1\expandafter\endcsname\csname end#1\endcsname
	\renewenvironment{#1}%
	{\linenomath\csname old#1\endcsname}%
	{\csname oldend#1\endcsname\endlinenomath}}% 
\newcommand*\patchBothAmsMathEnvironmentsForLineno[1]{%
	\patchAmsMathEnvironmentForLineno{#1}%
	\patchAmsMathEnvironmentForLineno{#1*}}%
\newcolumntype{L}[1]{>{\raggedright\arraybackslash}p{#1}} % linksb?ndig mit Breitenangabe
\newcolumntype{C}[1]{>{\centering\arraybackslash}p{#1}} % zentriert mit Breitenangabe
\newcolumntype{R}[1]{>{\raggedleft\arraybackslash}p{#1}} % rechtsb?ndig mit Breitenangabe
\newtheorem{theorem}{Theorem}
\newtheorem{lemma}[theorem]{Lemma}
\newtheorem{proposition}[theorem]{Proposition}
\newtheorem{corollary}[theorem]{Corollary}
\newtheorem{definition}[theorem]{Definition}
\newtheorem{remark}[theorem]{Remark}
\newcommand{\norm}[1]{\ensuremath\lVert #1 \rVert}
\newcommand{\norml}[2]{\ensuremath\lVert #2 \rVert_{L^{#1}}}
\newcommand{\normL}[2]{\ensuremath\lVert #2 \rVert_{\L^{#1}}}
\newcommand{\normw}[3]{\ensuremath\lVert #2 \rVert_{W^{#1,#2}}}
\newcommand{\normW}[3]{\ensuremath\lVert #2 \rVert_{\W^{#1,#2}}}
\newcommand{\normh}[2]{\ensuremath\lVert #2 \rVert_{H^{#1}}}
\newcommand{\normH}[2]{\ensuremath\lVert #2 \rVert_{\H^{#1}}}
\def\R{\mathbb R}
\def\CR{\mathcal R}
\def\N{\mathbb N}
\def\n{\mathbf n}
\def\a{\mathbf a}
\def\b{\mathbf b}
\def\U{\mathbb U}
\def\f{\mathbf{f}}
\def\UR{\mathbb{U}_R}
\def\u{{\bar u}}
\def\L{\mathbf L}
\def\H{\mathbf H}
\def\h{\mathds{h}}
\def\v{\mathbf{v}}
\def\w{\mathbf{w}}
\def\T{\mathbf{T}}
\def\I{\mathbf{I}}
\def\D{{\mathbf{D}}}
\def\V{{\mathcal V}}
\def\W{{\mathbf W}}
\def\F{{\mathbf{F}}}
\def\q{{\mathbf{q}}}
\def\tu{{\tilde u}}
\def\tv{{\tilde\v}}
\def\tvarphi{{\tilde\varphi}}
\def\tmu{{\tilde\mu}}
\def\tsigma{{\tilde\sigma}}
\def\tp{{\tilde p}}
\def\tw{{\tilde\w}}
\def\tphi{{\tilde\vartheta}}
\def\ttau{{\tilde\tau}}
\def\trho{{\tilde\rho}}
\def\OmegaT{{\Omega_T}}
\def\GammaT{{\Gamma_T}}
\def\intO{\int_\Omega}
\def\intOT{\int_\OmegaT}
\def\intG{\int_\Gamma}
\def\intGT{\int_\GammaT}
\def\ddt{\frac{\mathrm d}{\mathrm dt}}
\def\dx{\;\mathrm dx}
\def\dt{\;\mathrm dt}
\def\dxt{\;\mathrm d(x,t)}
\def\dS{\;\mathrm dS}
\def\del{\partial}
\def\delt{\partial_{t}}
\def\deln{\partial_\n}
\def\d{{\mathrm{d}}}
\def\grad{\nabla}
\def\laplace{\Delta}
\def\divergence{\textnormal{div}}
\def\sminus{\hspace{-2pt}-\hspace{-2pt}}
\def\scdot{\hspace{-2pt}\cdot\hspace{-2pt}}
\def\scolon{\hspace{-2pt}:\hspace{-2pt}}
\def\tand{\quad\text{and}\quad}
\def\twith{\quad\text{with}\quad}
\def\scdot{\hspace{-2pt}\cdot\hspace{-2pt}}
\def\wto{\rightharpoonup}
\def\wsto{\overset{*}{\rightharpoonup}}
\def\itema{\item[\textnormal{(a)}]}
\def\itemb{\item[\textnormal{(b)}]}
\def\itemc{\item[\textnormal{(c)}]}
\def\itemi{\item[\textnormal{(i)}]}
\def\itemii{\item[\textnormal{(ii)}]}
\def\itemiii{\item[\textnormal{(iii)}]}
\begin{document}
	
%
%	TITLE PAGE
%
	
\begin{center}	
	\LARGE{Optimal medication for tumors modeled by a Cahn-Hilliard-Brinkman equation}
\end{center}

\bigskip

\begin{center}	
	\normalsize{Matthias Ebenbeck and Patrik Knopf \footnote{orcid.org/0000-0003-4115-4885}}\\[1mm]
	\textit{Department of Mathematics, University of Regensburg, 93053 Regensburg, Germany}\\[1mm]
	\texttt{Matthias.Ebenbeck@ur.de, Patrik.Knopf@ur.de}\\[5mm]
	\textit{This is a preprint version of the paper. Please cite as:}\\
	M. Ebenbeck and P. Knopf, Calc. Var. (2019) 58: 131.\\
	\url{https://doi.org/10.1007/s00526-019-1579-z}
\end{center}

\bigskip

\begin{abstract}
	In this paper, we study a distributed optimal control problem for a diffuse interface model for tumor growth. The model consists of a Cahn-Hilliard type equation for the phase field variable coupled to a reaction diffusion equation for the nutrient and a Brinkman type equation for the velocity. The system is equipped with homogeneous Neumann boundary conditions for the tumor variable and the chemical potential, Robin boundary conditions for the nutrient and a \grqq no-friction" boundary condition for the velocity. The control acts as a medication by cytotoxic drugs and enters the phase field equation. The cost functional is of standard tracking type and is designed to track the variables of the state equation during the evolution and the distribution of tumor cells at some fixed final time. We prove that the model satisfies the basics for calculus of variations and we establish first-order necessary optimality conditions for the optimal control problem.
	\\[1ex]
	\noindent\textit{Keywords:} Optimal control with PDEs, calculus of variations, tumor growth, Cahn-Hilliard equation, Brinkman equation, first-order necessary optimality conditions.
	\\[1ex]
	\noindent\textit{MSC Classification:} 35K61, 76D07, 49J20, 49K20, 92C50.
\end{abstract}

\bigskip

%
%	INTRODUCTION
%

\section{Introduction}

The development of tumors involves many different biological and chemical factors. Since cancer arises due to disturbances in both cell growth and development, knowing the underlying processes will not only help to cure the disease but also provide an understanding of the mechanisms concerned with life itself. The usage of new techniques in cell and molecular biology applied to human tumors provides valuable insights. However, in order to study processes which cannot be easily observed by experiments, further methods have to be developed. In the recent past, mathematical models for tumor growth have turned out to be promising since some of them compare well with clinical experiments, see \cite{AgostiEtAl,BearerEtAl,FrieboesEtAl}. \\[1ex]
In particular, multiphase models, describing the tumor as a saturated medium, have gained much more interest. These models are typically based on mass and momentum balance equations and mass/momentum exchange between the different phases and can be closed by appropriate constitutive laws. Several biological mechanisms like chemotaxis, mitosis, angiogenesis or necrosis can be incorporated or effects due to stress, plasticity or viscoelasticity can be included, see \cite{AstaninPreziosi,GarckeLamNuernbergSitka,OdenTinsleyHawkins,PreziosiTosin}. If the mixture is assumed to consist of only two phases, many contributions in the literature cover Cahn-Hilliard type equations coupled to reaction-diffusion equations for an unknown species acting as a nutrient (e.g. oxygen or glucose), see \cite{ColliGilardiHilhorst,FrigeriGrasselliRocca,GarckeLam2,GarckeLam3,HawkinsZeeKristofferOdenTinsley,HilhorstKampmannNguyenZee}. Some of these models also include velocity via a Darcy law or a Stokes/Brinkman equation, see \cite{EbenbeckGarcke,GarckeLam1,GarckeLam4,JiangWuZheng}.\\[1ex]
In this paper, we consider the following model: Let $\Omega\subset \R^d$ with $d=2,3,$ be a bounded domain. For a fixed final time $T>0$, we write $\OmegaT:=\Omega\times(0,T)$. By $\n $ we denote the outer unit normal on $\del\Omega$ and $\deln g \coloneqq \grad g\cdot \n $ denotes the outward normal derivative of the function $g$ on $\Gamma$. Our state system is given by
\begin{subequations}
	\label{EQ:CHB}
	\begin{empheq}[left=\text{(CHB)}\empheqlbrace]{align}
		\label{EQ:CHB1}
		\divergence(\v) &= (P\sigma-A)\h(\varphi) &&\text{in}\; \OmegaT,\\
		\label{EQ:CHB2}
		-\divergence(\T(\v,p)) + \nu\v &= (\mu+\chi\sigma)\grad\varphi &&\text{in}\; \OmegaT,\\
		\label{EQ:CHB3}
		\delt\varphi + \divergence(\varphi\v) &= m\laplace\mu + (P\sigma-A-u)\h(\varphi) &&\text{in}\; \OmegaT,\\
		\label{EQ:CHB4}
		\mu &= -\epsilon\laplace\varphi + \epsilon^{-1}\psi'(\varphi) -\chi\sigma &&\text{in} \; \OmegaT,\\
		\label{EQ:CHB5}
		-\laplace\sigma + \h(\varphi)\sigma &= 0 &&\text{in}\; \OmegaT,\\[2mm]
		\label{EQ:CHB6}
		\deln\mu=\deln\varphi &= 0 &&\text{in}\; \GammaT,\\
		\label{EQ:CHB7}
		\deln\sigma &= K(1-\sigma) &&\text{in}\; \GammaT,\\
		\label{EQ:CHB8}
		\T(\v,p)\n &= 0 &&\text{in}\; \GammaT,\\[2mm]
		\label{EQ:CHB9}
		\varphi(0) &= \varphi_0	&&\text{in}\; \Omega,
	\end{empheq}
\end{subequations}
where the viscous stress tensor is defined by
\begin{equation}
\label{definition_stress_tensor}\T(\v,p) \coloneqq 2\eta \D\v+\lambda\divergence(\v)\I  - p\I ,
\end{equation}
and the symmetric velocity gradient is given by
\begin{equation*}
\D\v\coloneqq \tfrac{1}{2}(\grad\v+\grad\v^T).
\end{equation*}
In \eqref{EQ:CHB}, $\v$ denotes the volume-averaged velocity of the mixture, $p$ denotes the pressure and $\sigma$ denotes the concentration of an unknown species acting as a nutrient. The function $\varphi$ denotes the difference of the local relative volume fractions of tumor tissue and healthy tissue where $\{x\in\Omega:\varphi(x) = 1\}$ represents the region of pure tumor tissue, $\{x\in\Omega:\varphi(x) = -1\}$ stands for the surrounding pure healthy tissue and $\{x\in\Omega: -1< \varphi(x) <1\}$ represents the transition between these pure phases. Furthermore, $\mu$ denotes the chemical potential associated with $\varphi$. The positive constant $m$ represents the mobility for the phase variable $\varphi$. The thickness of the diffuse interface is modelled by a small parameter $\epsilon>0$ and the constant $\nu>0$ is related to the fluid permeability. Moreover, the constants $\eta$ and $\lambda$ are non-negative and represent the shear and the bulk viscosity, respectively. The proliferation rate $P$, the apoptosis rate $A$ and the chemotaxis parameter $\chi$ are non-negative constants, whereas $K$ is a positive permeability constant.
The term $-u\h(\varphi)$ in \eqref{EQ:CHB3} models the elimination of tumor cells by cytotoxic drugs and the function $u$ will act as our control. This specific control term has been investigated in \cite{GarckeLamRocca} where a simpler model was studied in which the influence of the velocity $\v$ is neglected. Since it does not play any role in the analysis, we set $\epsilon = 1$.
\\[1ex]
We investigate the following distributed optimal control problem:
\begin{align}
\notag\text{Minimize}\quad I(\varphi,\mu,\sigma,\v,p,u)&:= 
\frac{\upalpha_0} 2 \norml{2}{\varphi(T)-\varphi_f}^2
+ \frac{\upalpha_1} 2 \norm{\varphi-\varphi_d}_{L^2(\Omega_T)}^2
+ \frac{\upalpha_2} 2\norm{\mu-\mu_d}_{L^2(\Omega_T)}^2 \\
&\qquad + \frac{\upalpha_3} 2\norm{\sigma-\sigma_d}_{L^2(\Omega_T)}^2
+ \frac{\upalpha_4} 2\norm{\v-\v_d}_{L^2(\Omega_T)}^2
+ \frac{\kappa} 2\norm{u}_{L^2(\Omega_T)}^2
\end{align}
subject to the control constraint
\begin{align}
u\in \U:=\big\{ u\in L^2(L^2) \;\big\vert\; a(x,t) \le u(x,t) \le b(x,t) \;\;\text{for almost every}\; (x,t)\in\OmegaT \big\}
\end{align}
for box-restrictions $a,b\in L^2(L^2)$ with $a\le b$ almost everywhere in $\OmegaT$ and the state system (CHB). Here, $\upalpha_0,...,\upalpha_4$ and $\kappa$ are nonnegative constants. The optimal control problem can be interpreted as the search for a strategy how to supply a medication based on cytotoxic drugs such that
\begin{itemize}
	\itemi a desired evolution of the tumor cells, chemical potential, nutrient concentration and velocity (expressed by the target functions $\varphi_d$, $\mu_d$, $\sigma_d$ and $\v_d$) is realized as good as possible;
	\itemii a therapeutic target (expressed by the final distribution $\varphi_f$) is achieved in the best possible way;
	\itemiii the amount of supplied drugs does not cause harm to the patient (expressed by both the control constraint and the last term in the cost functional).
\end{itemize}
The ratio between the parameters $\upalpha_0,...,\upalpha_4$ and $\kappa$ can be adjusted according to the importance of the individual therapeutic targets. 
\\[1ex]
The uncontrolled version of \eqref{EQ:CHB} is a simplification of the model considered in \cite{EbenbeckGarcke2} when making the following choices in the notation used therein:
\begin{alignat*}{3}
\eta(\cdot)&\equiv \eta &\qquad \lambda(\cdot)&\equiv\lambda,\\
\Gamma_\v(\varphi,\sigma)& = (P\sigma-A)\h(\varphi)&\qquad\Gamma_{\varphi}(\varphi,\sigma)&= (P\sigma-A)\h(\varphi)\\
m(\cdot) &\equiv m&\sigma_{\infty}&=1.
\end{alignat*}
Typically, the function $\h(\cdot)$ is nonnegative and interpolates between $\h(-1)=0$ and $\h(1)=1$. Source terms of this type are, for instance, studied in \cite{GarckeLamSitkaStyles}. Typical choices for $\h$ are discussed in Remark \ref{REM:ASS}(c). \\[1ex] The choice $\sigma_\infty=1$ can be justified by a non-dimensionalization argument (see \cite{ChristiniLiLowengrubWise}).  
\\[1ex]
As \eqref{EQ:CHB7} is equivalent to $\sigma = 1 - K^{-1}\deln \sigma$, we obtain a Dirichlet condition in the limit $K\to\infty$. In \cite[Thm. 2.5]{EbenbeckGarcke2} it was rigorously shown that weak solutions of the uncontrolled version of \eqref{EQ:CHB} converge to a weak solution of the corresponding system with \eqref{EQ:CHB8} replaced by $\sigma = 1$ as $K$ tends to infinity. 
\\[1ex]
Before analyzing the optimal control problem we have to establish the fundamental requirements for calculus of variations. In Section 3, we prove the existence of a control-to-state operator that maps any admissible control $u\in\U$ onto a corresponding unique strong solution of the state equation (CHB). Furthermore, we show that this control-to-state operator is Lipschitz-continuous, Fréchet differentiable and satisfies a weak compactness property. 
\\[1ex]
Eventually, in Section 4, we investigate the above optimal control problem. First, we show that there exists at least one globally optimal solution. After that, we present necessary conditions for local optimality which can be formulated concisely by means of adjoint variables. These conditions are of great importance for possible numerical implementations as they provide the foundation for many computational optimization methods. 
\\[1ex]
Finally, we also want to refer to some further works where optimal control problems for tumor models are studied.
Results on optimal control problems for tumor models based on ODEs are investigated in \cite{SchaettlerLedzewicz2,Oke,SchaettlerLedzewicz,Swan}. In the context of PDE-based control problems we refer to \cite{Benosman} where a tumor growth model of advection-reaction-diffusion type is considered.
There are various papers analyzing optimal control problems for Cahn-Hilliard equations (e.g., \cite{ColliFarshbaf,HintermuellerWegner, Sprekels-Wu}). Furthermore, control problems for the convective Cahn-Hilliard equation where the control acts as a velocity were investigated in \cite{ColliGilardiSprekels,GilardiSprekels,ZhaoLiu1,ZhaoLiu2} whereas in \cite{Biswas} the control enters in the momentum equation of a Cahn-Hilliard-Navier-Stokes system. As far as control problems for Cahn-Hilliard-based models for tumor growth are considered, there are only a few contributions where an equation for the nutrient is included in the system. In \cite{ColliGilardiRoccaSprekels}, the authors investigated an optimal control problem consisting of a Cahn-Hilliard-type equation coupled to a time-dependent reaction-diffusion equation for the nutrient, where the control acts as a right-hand side in this nutrient equation. The model they considered was firstly proposed in \cite{HawkinsZeeKristofferOdenTinsley} and later well-posedness and existence of strong solutions were established in \cite{FrigeriGrasselliRocca}. However, effects due to velocity are not included in their model and mass conservation holds for the sum of tumor and nutrient concentrations. 
Moreover, we want to mention the papers \cite{GarckeLamRocca} and \cite{Cavaterra} where optimal control problems of treatment time are studied. In \cite{GarckeLamRocca} the control enters the phase field equation in the same way as ours whereas in \cite{Cavaterra} it enters the nutrient equation. Although the nutrient equation in both papers is non-stationary, some of the major difficulties do not occur since the velocity is assumed to be negligible ($\v=\mathbf 0$). 
A regularized version of the model in \cite{Cavaterra} was studied in \cite{Signori} for a logarithmic-type potential and in \cite{Signori2} for a broad class of regular potentials.

\section{Preliminaries}
We first want to fix some notation: For any (real) Banach space $X$, its corresponding norm is denoted by $\norm{\cdot}_X$. $X^\ast$ denotes the dual space of $X$ and $\langle \cdot{,}\cdot \rangle_X$ stands for the duality pairing between $X^\ast$ and $X$. If $X$ is an inner product space, its inner product is denoted by $(\cdot{,}\cdot)_X$. We define the scalar product of two matrices by
\begin{equation*}
\mathbf{A}:\mathbf{B}\coloneqq \sum_{j,k=1}^{d}a_{jk}b_{jk}\quad\text{for } \mathbf{A}=(a_{ij})_{1\le i,j\le d},\; \mathbf{B}=(b_{ij})_{1\le i,j\le d} \in\R^{d\times d}.
\end{equation*}
For the standard Lebesgue and Sobolev spaces with $1\leq p\leq \infty$, $k>0$,  we use the notation $L^p\coloneqq L^p(\Omega)$ and $W^{k,p}\coloneqq W^{k,p}(\Omega)$ with norms $\norml{p}{\cdot}$ and $\normw{k}{p}{\cdot}$ respectively.  In the case $p=2$ we write $H^k\coloneqq W^{k,2}$ and the norm $\normh{k}{\cdot}$. By $\mathbf{L}^p$, $\mathbf{W}^{k,p}$ and $\mathbf{H}^k$, we denote the corresponding spaces of vector or matrix valued functions. 
For any Banach space $X$, $p\in [1,\infty]$, $k\in\N$ and $T>0$ we use the notation
\begin{align*}
	L^p(X) \coloneqq L^p(0,T;X),\qquad W^{k,p}(X)\coloneqq W^{k,p}(0,T;X), \qquad C(X)\coloneqq C([0,T];X).
\end{align*}
For the dual space $X^\ast$ of a Banach space $X$, we introduce the (generalized) mean value by 
\begin{equation*}
v_{\Omega}\coloneqq \frac{1}{|\Omega|}\intO v\dx\quad\text{for } v\in L^1, \quad v_{\Omega}^\ast\coloneqq \frac{1}{|\Omega|}\langle v{,}1\rangle_X\quad\text{for } v\in X^\ast.
\end{equation*}
Moreover, we introduce the function spaces
\begin{equation*}
L_0^2\coloneqq \{w\in L^2\colon w_{\Omega}=0\},\quad (H^1)_0^\ast\coloneqq \{f\in (H^1)^\ast\colon f_{\Omega}^\ast =0\},\quad H_\n^2\coloneqq \{w\in H^2\colon \deln w = 0 \text{ on } \del\Omega\}.
\end{equation*}
Then, the Neumann-Laplace operator $-\laplace_N\colon H^1\cap L_0^2\to (H^1)_0^\ast$ is positive definite and self-adjoint. In particular, by the Lax-Milgram theorem and the Poincaré inequality,
%(see (\ref{Poincare}))
the inverse operator $(-\laplace_N)^{-1}\colon (H^1)_0^\ast\to H^1\cap L_0^2$ is well-defined, and we set $u\coloneqq (-\laplace_N)^{-1}f$ for ${f\in (H^1)_0^\ast}$ if $u_{\Omega}=0$ and 
\begin{equation*}
-\laplace u = f\text{ in }\Omega,\quad\deln u =0\text{ on }\del\Omega.
\end{equation*}
We have dense and continuous embeddings $H_\n^2\hookrightarrow H^1\hookrightarrow L^2\simeq (L^2)^\ast\hookrightarrow (H^1)^\ast\hookrightarrow (H_\n^2)^\ast$ and the identifications $\langle u{,}v\rangle_{H^1}=(u{,}v)_{L^2}$, $\langle u{,}w\rangle _{H^2} = (u{,}w)_{L^2}$ for all $u\in L^2,~ v\in H^1$ and $w\in H_\n^2$.\newline
Furthermore, we define the function spaces
\begin{align*}
\V_1 &\coloneqq \big(H^1(L^2)\cap L^{\infty}(H^2)\cap L^2(H^4)\big) \times \big(L^{\infty}(L^2)\cap L^2(H^2)\big)\times \big(H^1(H^1)\cap C(H^1)\cap L^{\infty}(H^2)\big)\\
&\quad \times  L^8(\H^2)\times L^8(H^1),\\
\V_2 &\coloneqq \big(H^1((H^1)^*)\cap L^{\infty}(H^1)\cap L^2(H^3)\big)\times L^2(H^1)\times L^2(H^1)\times L^2(\H^1)\times L^2(L^2),\\
\V_3&\coloneqq L^2(L^2)\times H^1\times L^2(H^1)\times L^2(L^2)\times L^{2}(\H^1),
\end{align*}
endowed with the standard norms.
\\[1ex]
Finally, we state the following lemma (see \cite{AbelsTerasawa} for a proof), which will be needed later for the construction of solutions by a Galerkin ansatz: 
\begin{lemma}\label{Stokes_Neumann result}
	Let $\Omega\subset\R^d, d=2,3,$ be a bounded domain with $C^{1,1}$-boundary and outer unit normal $\mathbf{n}$ and $1<q<\infty$. Furthermore, assume that $g\in W^{1,q}$ and $\mathbf{f}\in \mathbf{L}^q$ and $\eta,\nu,\lambda$ are constants fulfilling $\eta,\nu>0,~\lambda\geq 0$. Then, there exists a unique solution $(\v,p)\in \mathbf{W}^{2,q}\times W^{1,q}$ of the system 
	\begin{subequations}
		\label{Stokes_subsystem}
		\begin{alignat}{3}\
		\label{Stokes_subsystem_1}-\divergence(2\eta \D\v +\lambda\divergence(\v)\I)+\nu\v + \grad p &= \mathbf{f}&&\quad \text{a.e. in }\Omega,\\
		\label{Stokes_subsystem_2}\divergence(\v) &= g&&\quad\text{a.e. in }\Omega,\\
		\label{Stokes_subsystem_3}(2\eta \D\v +\lambda\divergence(\v)\I-p\I)\mathbf{n} &= \mathbf{0}&&\quad \text{a.e. on }\del \Omega,
		\end{alignat}
	\end{subequations}
	satisfying the following estimate
	\begin{equation}
	\label{Stokes_Neumann_estimate}\normW{2}{q}{\v} + \normw{1}{q}{p}\leq C\left(\normL{q}{\f}+\normw{1}{q}{g}\right),
	\end{equation}
	with a constant $C$ depending only on $\eta,\lambda,\nu,q$ and $\Omega$.
\end{lemma}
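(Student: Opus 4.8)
My plan is to establish well-posedness first in the Hilbert-space case $q=2$ by a saddle-point argument, and then to upgrade the solution to $\W^{2,q}\times W^{1,q}$ for general $q$ by elliptic regularity. As a preliminary observation, since $\divergence(2\eta\D\v)=\eta\laplace\v+\eta\grad\divergence\v$, the constraint \eqref{Stokes_subsystem_2} turns the principal part into $-\eta\laplace\v-(\eta+\lambda)\grad g$; this is exactly what forces the assumption $g\in W^{1,q}$, since then the extra term $(\eta+\lambda)\grad g$ lies in $\L^q$ and may be absorbed into the data of a Brinkman system $-\eta\laplace\v+\nu\v+\grad p=\f+(\eta+\lambda)\grad g$.

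For the weak formulation I would test \eqref{Stokes_subsystem_1} with $\w\in\H^1$ and integrate by parts; the resulting boundary integral equals $-\int_{\del\Omega}\big((2\eta\D\v+\lambda\divergence(\v)\I-p\I)\n\big)\cdot\w\dS$ and vanishes by the no-friction condition \eqref{Stokes_subsystem_3}. This produces the mixed problem of finding $(\v,p)\in\H^1\times L^2$ with $a(\v,\w)-\intO p\,\divergence\w\dx=\intO\f\cdot\w\dx$ for all $\w\in\H^1$ and $\intO r\,\divergence\v\dx=\intO r\,g\dx$ for all $r\in L^2$, where $a(\v,\w):=\intO 2\eta\,\D\v:\D\w+\lambda\,\divergence\v\,\divergence\w+\nu\,\v\cdot\w\dx$. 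The key structural feature is that $a$ is coercive on the whole space $\H^1$: by Korn's inequality together with the zeroth-order term one has $a(\v,\v)\ge 2\eta\norm{\D\v}_{L^2}^2+\nu\norm{\v}_{L^2}^2\ge c\norm{\v}_{\H^1}^2$. Combined with the inf-sup condition — which here amounts to surjectivity of $\divergence\colon\H^1\to L^2$, and where no mean-value restriction on $g$ is needed because the normal velocity on $\del\Omega$ is unconstrained — the Babuška--Brezzi theorem yields a unique $(\v,p)\in\H^1\times L^2$. Taking $\f=\mathbf 0$, $g=0$ and testing with $\v$ gives the energy identity $a(\v,\v)=0$, whence $\v=\mathbf 0$, and then $\grad p=\mathbf 0$ while \eqref{Stokes_subsystem_3} forces $p\n=\mathbf 0$ on $\del\Omega$, so $p\equiv 0$. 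Note that the traction condition pins the pressure completely rather than only up to an additive constant, so the homogeneous operator has trivial kernel.

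To reach general $q$ I would use that the system \eqref{Stokes_subsystem} with the stress boundary operator \eqref{Stokes_subsystem_3} is an Agmon--Douglis--Nirenberg elliptic boundary value problem: one checks that the principal symbol of the Brinkman operator together with the traction boundary operator satisfies the Lopatinskii--Shapiro complementing condition. Localizing by a partition of unity subordinate to a finite cover of $\overline\Omega$ and flattening the $C^{1,1}$-boundary, the interior contributions are controlled by standard interior $\L^q$-estimates for the Stokes system, while near the boundary the problem reduces to a half-space model whose solution operator can be written explicitly and estimated in $\L^q$ by the Mikhlin multiplier theorem. Summing the local bounds gives the a priori estimate $\norm{\v}_{\W^{2,q}}+\norm{p}_{W^{1,q}}\le C(\norm{\f}_{\L^q}+\norm{g}_{W^{1,q}}+\norm{\v}_{\L^q}+\norm{p}_{L^q})$ with $C$ depending only on $\eta,\lambda,\nu,q,\Omega$. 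This estimate renders the associated operator Fredholm of index zero; combined with the triviality of its kernel (a homogeneous $\W^{2,q}$-solution is regular enough to coincide with the $\H^1\times L^2$ solution, which is zero), the operator is an isomorphism for every $q\in(1,\infty)$. Existence, uniqueness and the clean estimate \eqref{Stokes_Neumann_estimate} — with the lower-order terms absorbed — then follow at once.

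I expect the genuine obstacle to be the boundary part of the regularity step: verifying the complementing condition for the stress boundary operator and producing the half-space $\L^q$ multiplier bounds with constants controlled solely by $\eta,\lambda,\nu,q$ and the geometry, together with the Fredholm/absorption argument that removes the lower-order terms. By comparison, the $q=2$ theory and the energy-based uniqueness are routine.
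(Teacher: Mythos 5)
You should first be aware that the paper does not prove this lemma at all: it is quoted from Abels and Terasawa \cite{AbelsTerasawa}, so there is no in-text argument to compare yours against. Your two-stage plan --- a saddle-point (Babuska--Brezzi) argument in the Hilbert case followed by Agmon--Douglis--Nirenberg localization and a Fredholm argument for general $q$ --- is the standard route to such a result and is consistent with how the classical literature on the Stokes/Lam\'e system with traction boundary conditions proceeds. The $q=2$ part is sound: coercivity of $a$ on all of $\H^1$ via Korn's second inequality together with the $\nu$-term, the inf-sup condition without any mean-value constraint on $g$ because the normal trace of the velocity is unconstrained, and the observation that the traction condition pins the pressure completely (trivial kernel, not merely a kernel of constants) are all correct and are exactly the structural features that make the result work.

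Two points in the $L^q$ stage are genuinely incomplete rather than merely routine. First, classical ADN theory for a second-order system asks for more boundary regularity than $C^{1,1}$; to obtain $\W^{2,q}\times W^{1,q}$ estimates on a $C^{1,1}$ domain one must freeze coefficients and treat the deviation from the flat half-space model as a small perturbation, and you should indicate how the perturbation constants are controlled by the $C^{1,1}$ modulus so that $C$ depends only on $\eta,\lambda,\nu,q,\Omega$. Second, your kernel-triviality argument (``a homogeneous $\W^{2,q}$-solution is regular enough to coincide with the $\H^1\times L^2$ solution'') fails as stated for small $q$: in dimension $3$ one has $\W^{2,q}\hookrightarrow\H^1$ only for $q\ge 6/5$, so for $1<q<6/5$ you need either a bootstrap (the lower-order terms of a homogeneous solution lie in a better $\L^{q'}$ by Sobolev embedding, so the local regularity statement raises the exponent in finitely many steps until the embedding into $\H^1$ applies) or a duality argument identifying the adjoint problem with one of the same form. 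Neither repair is difficult, but as written the isomorphism claim for all $q\in(1,\infty)$ does not yet follow; since the paper itself offers only the citation, supplying one of these two closing arguments is precisely what a self-contained proof would have to add.
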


\pagebreak[2]

\noindent Throughout this paper, we make the following assumptions:
\\[1ex]
\noindent\textbf{Assumptions.}
\begin{enumerate}
	\item[(A1)] The domain $\Omega\subset\R^d,~d=2,3,$ is bounded with $C^4-$boundary $\Gamma:=\del\Omega$ and the initial datum $\varphi_0\in H^2_\n$ is prescribed. 
	\item[(A2)] The constants $T$, $K$, $\eta$, $\nu$, $m$ are positive and the constants $P$, $A$, $\lambda$, $\chi$ are nonnegative.
	\item[(A3)] The nonnegative function $\h$ belongs to $C^2_b(\R)$, i.e., $\h$ is bounded, twice continuously differentiable and its first and second-order derivatives are bounded. Without loss of generality, we assume that $|\h|\le 1$.
	\item[(A4)] The function $\psi\in C^3(\R)$ is non-negative and can be written as
	\begin{equation}
	\label{assumption_on_psi_1}\psi(s) = \psi_1(s) + \psi_2(s)\quad\forall s\in \R,
	\end{equation}
	where $\psi_1, \psi_2\in C^3(\R)$ and 
	\begin{alignat}{3}
	\label{assumption_on_psi_2}R_1(1+|s|^{\rho-2})&\leq \psi_1''(s)\leq R_2(1+|s|^{\rho-2})&&\quad\forall s\in\R,\\
	\label{assumption_on_psi_3}|\psi_2''(s)|&\leq R_3&&\quad\forall s\in\R,\\
	\label{assumption_on_psi_4}|\psi'''(s)|&\leq R_4(1+|s|^3)&&\quad\forall s\in \R,
	\end{alignat} 
	where $R_i,~i=1,...,4$, are positive constants with $R_1<R_2$ and $\rho\in [2,6]$. Furthermore, if $\rho=2$, we assume that $2R_1>R_3$.
	\item[(A5)] The target functions satisfy $(\varphi_d,\varphi_f,\mu_d,\sigma_d,\v_d) \in \V_3$.
\end{enumerate}

\begin{remark} $\;$
	\label{REM:ASS}
	\begin{itemize}
		\itema 	Using \textnormal{(A4)}, it is straightforward to check that there exist positive constants $R_i,~i=5,...,9,$ such that 
		\begin{align}
		\label{assumption_on_psi_5}\psi(s)&\geq R_5|s|^{\rho} - R_6 &&\quad \forall s\in \R, \\
		\label{assumption_on_psi_6}|\psi'(s)|&\leq R_7(1+|s|^{\rho-1}) &&\quad \forall s\in \R, \\
		\label{assumption_on_psi_7}|\psi'(s_1)-\psi'(s_2)|&\leq R_8(1+|s_1|^4 + |s_2|^4)|s_1-s_2|&&\quad \forall s_1,s_2\in\R,\\
		\label{assumption_on_psi_8}|\psi''(s_1)-\psi''(s_2)|&\leq R_9(1+|s_1|^3 + |s_2|^3)|s_1-s_2|&&\quad \forall s_1,s_2\in\R.
		\end{align}
		\itemb  The assumptions \eqref{assumption_on_psi_2}-\eqref{assumption_on_psi_4} (and thus also \eqref{assumption_on_psi_5}-\eqref{assumption_on_psi_8}) are fulfilled by the classical double-well potential $\psi(s)=\frac 1 4 (s^2-1)^2$. For the splitting we can choose $\psi_1(s)=\frac 1 4 s^4$ and $\psi_2(s)=-\frac 1 2 s^2 + \frac 1 4 $.
		\itemc For the function $\h(\cdot)$, there are two choices which are quite popular in the literature. In, e.g., \cite{GarckeLam3, GarckeLamSitkaStyles}, the choice for $\h$ is given by
		\begin{equation*}
		\h(\varphi) = \max\Big\{0,\min\left\{1,\tfrac{1}{2}(1+\varphi)\right\}\Big\}\quad\forall \varphi\in\R,
		\end{equation*}
		satisfying $\h(-1)= 0$, $\h(1)=1$. Other authors preferred to assume that $\h$ is only active on the interface, i.e., for values of $\varphi$ between $-1$ and $1$, which motivates functions of the form
		\begin{equation*}
		\h(\varphi) = \max\Big\{0,\tfrac{1}{2}(1-\varphi^2)\Big\}\quad\text{or}\quad \h(\varphi) = \tfrac{1}{2}\big(\cos\big(\pi \min \big\{1,\max\{\varphi,-1\} \big\}\big) + 1\big),
		\end{equation*}
		see, e.g., \cite{HilhorstKampmannNguyenZee,KahleLam}. Surely, we would have to use regularized versions of these choices to fulfill \textnormal{(A3)}.
	\end{itemize}
\end{remark}

\section{The control-to-state operator}

In this section, we consider the equation (CHB) as presented in the introduction. First, we define a certain set of admissible controls that are suitable for the later approach. We will see that each of these admissible controls induces a unique strong solution (the so-called state) of the system (CHB). Therefore, we can define a control-to-state-operator which maps any admissible control onto its corresponding state. We show that this operator has some important properties that are essential for calculus of variations: It is Lipschitz-continuous, Fréchet-differentiable and satisfies a weak compactness property.

\subsection{The set of admissible controls}
The set of admissible controls is defined as follows:
\begin{definition}
	Let $a,b\in L^2(L^2)$ be arbitrary fixed functions with $a\le b$ almost everywhere in $\OmegaT$. Then the set
	\begin{align}
		\U:=\big\{ u\in L^2(L^2) \;\big\vert\; a(x,t) \le u(x,t) \le b(x,t) \;\;\text{for almost every}\; (x,t)\in\OmegaT \big\}
	\end{align}
	is referred to as the \textbf{set of admissible controls}. Its elements are called \textbf{admissible controls}.
\end{definition}
Note that this box-restricted set of admissible controls $\U$ is a non-empty, bounded subset of the Hilbert space $L^2(L^2)$ since for all $u\in\U$,
\begin{align}
	\norm{u}_{L^2(L^2)} < \norm{a}_{L^2(L^2)} + \norm{b}_{L^2(L^2)} + 1 =: R.
\end{align}
This means that 
\begin{align}
	\U \subsetneq \UR \twith \UR := \big\{ u\in L^2(L^2) \;\big\vert\; \norm{u}_{L^2(L^2)} <R \big\}.
\end{align}
Obviously, the set $\U$ is also convex and closed in $L^2(L^2)$. Therefore, it is weakly sequentially compact (see \cite[Thm.\,2.11]{troeltzsch}). 

\subsection{Strong solutions and uniform bounds}

We can show that the system (CHB) has a unique strong solution for every control $u\in\UR$:

\begin{theorem}
	\label{THM:EXS}
	Let $ u\in\UR$ and $\varphi_0\in H^2_\n(\Omega)$ be arbitrary. Then, the system \textnormal{(CHB)} has a unique strong solution $(\varphi_u,\mu_u,\sigma_u,\v_u,p_u)$ with
	\begin{gather*}
	\varphi_u \in H^1(L^2) \cap L^\infty(H^2)\cap L^2(H^4), \quad 	\mu_u \in L^\infty(L^2)\cap L^2(H^2), \\
	\sigma_u \in H^1(H^1)\cap C(H^1)\cap L^{\infty}(H^2),\quad \v_u \in L^8(\H^2)  , \quad p_u \in L^8(H^1). 
	\end{gather*}
	This unique solution is called the \textbf{state} of the control system. Moreover, the strong solution quintuple $(\varphi_u,\mu_u,\sigma_u,\v_u,p_u)$ satisfies the following bounds that are uniform in $u$: 
	\begin{gather*}
		\norm{\varphi_u}_{H^1(L^2)\cap L^\infty(H^2)\cap L^2(H^4)} \le C_1, \quad \norm{\mu_u}_{L^\infty(L^2)\cap L^2(H^2)} \le C_2, \\
		\norm{\sigma_u}_{H^1(H^1)\cap C(H^1)\cap L^{\infty}(H^2)} \le C_3,\quad \norm{\v_u}_{L^8(\H^2) } \le C_4 \quad \norm{p_u}_{L^2(H^1)} \le C_5,	
	\end{gather*}
	where $C_1,...,C_5>0$ are constants that depend only on the system paramters and on $R$, $\Omega$, $\Gamma$ and $T$.
\end{theorem}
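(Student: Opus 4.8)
The plan is to construct the solution via a Faedo--Galerkin approximation, derive a priori estimates that are uniform both in the discretization parameter and in the control $u\in\UR$, pass to the limit by compactness, and then bootstrap to the claimed high regularity. Uniqueness is established separately through an energy estimate on the difference of two solutions.

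First I would set up the Galerkin scheme for the Cahn--Hilliard subsystem \eqref{EQ:CHB3}--\eqref{EQ:CHB4} using the eigenfunctions of the Neumann--Laplacian $-\laplace_N$ as approximating basis. The quasi-stationary parts are resolved instantaneously: at each time I solve the elliptic nutrient equation \eqref{EQ:CHB5} with the Robin condition \eqref{EQ:CHB7} for $\sigma$ given $\varphi$ (a Lax--Milgram argument, where $\h\ge0$ from (A3) makes the bilinear form coercive), and the Brinkman system \eqref{EQ:CHB1}--\eqref{EQ:CHB2} with the no-friction condition \eqref{EQ:CHB8} for $(\v,p)$ given $(\varphi,\mu,\sigma)$ via Lemma \ref{Stokes_Neumann result}. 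This produces a system of ODEs for the Galerkin coefficients, locally solvable by the Carathéodory theory; global solvability then follows once the a priori bounds are in hand.

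The heart of the argument is the energy estimate. Testing \eqref{EQ:CHB3} with $\mu$ and \eqref{EQ:CHB4} with $\delt\varphi$ and adding yields the dissipation identity
\begin{align*}
\ddt\Big[\tfrac12\norml{2}{\grad\varphi}^2 + \intO\psi(\varphi)\dx\Big] + m\norml{2}{\grad\mu}^2
&= \chi\intO\sigma\,\delt\varphi\dx - \intO\divergence(\varphi\v)\,\mu\dx \\
&\quad + \intO(P\sigma-A-u)\h(\varphi)\,\mu\dx,
\end{align*}
while testing \eqref{EQ:CHB2} with $\v$ (integrating by parts, using \eqref{EQ:CHB8}, and substituting $\divergence\v=(P\sigma-A)\h(\varphi)$ from \eqref{EQ:CHB1}) controls $\norml{2}{\D\v}^2$ and $\norml{2}{\v}^2$ in terms of $(\mu+\chi\sigma)\grad\varphi$ and the pressure coupling, and testing \eqref{EQ:CHB5} with $\sigma$ gives an $H^1$ bound on $\sigma$ in terms of $\varphi$ and the data. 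Using $|\h|\le1$, the growth conditions (A4) on $\psi$, Korn's and Poincaré's inequalities, Gagliardo--Nirenberg interpolation in dimension $d\le3$, and Young's inequality to absorb critical terms into the dissipation, a Gronwall argument delivers uniform bounds on $\varphi$ in $H^1(L^2)\cap L^\infty(H^1)$, on $\mu$ in $L^2(H^1)$, on $\sigma$ in $L^\infty(H^1)$, and on $\v$ in $L^2(\H^1)$. Since $u$ enters only through $-u\h(\varphi)$ with $|\h|\le1$ and $\norm{u}_{L^2(L^2)}<R$, every constant depends on $u$ solely through $R$, which is exactly the asserted uniformity.

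With these bounds I would pass to the limit via the Aubin--Lions--Simon lemma, gaining the strong convergence of $\varphi$ needed to identify the nonlinearities $\psi'(\varphi)$ and $\h(\varphi)$. The remaining and most delicate step is the bootstrap to full regularity: testing with $-\laplace\mu$ (and exploiting the $\sigma$-equation) upgrades $\mu$ to $L^\infty(L^2)\cap L^2(H^2)$ and $\sigma$ to $C(H^1)\cap L^\infty(H^2)$; elliptic regularity applied to \eqref{EQ:CHB4} rewritten as $-\laplace\varphi = \mu - \psi'(\varphi) + \chi\sigma$ then lifts $\varphi$ first to $L^\infty(H^2)$ and, after using $\mu\in L^2(H^2)$, to $L^2(H^4)$. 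The velocity regularity $\v\in L^8(\H^2)$, $p\in L^8(H^1)$ follows by invoking the Stokes--Neumann estimate \eqref{Stokes_Neumann_estimate} pointwise in time with a suitable exponent $q$, once the temporal integrability of the data $(\mu+\chi\sigma)\grad\varphi$ and of $\divergence\v$ has been quantified by interpolation. I expect this bootstrap, together with the correct bookkeeping of the velocity's time integrability, to be the main obstacle: the Brinkman pressure feeding back into the phase-field energy and the high spatial regularity demanded of $\varphi$ force the estimates to be chained in precisely the right order. Uniqueness is then obtained by writing the system for the difference of two solutions, testing with the natural multipliers, and closing via Gronwall's inequality---here the quasi-stationary structure lets the differences in $\sigma$ and $\v$ be controlled directly by the difference in $\varphi$.
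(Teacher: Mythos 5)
Your overall strategy (Galerkin approximation, quasi-static resolution of the nutrient and Brinkman subsystems, energy estimates uniform in $u$ through $\norm{u}_{L^2(L^2)}<R$ and $|\h|\le 1$, Aubin--Lions, bootstrap, Gronwall for uniqueness) is exactly the machinery behind this theorem. Be aware, though, that the paper does not reprove existence from scratch: its proof consists of citing \cite[Thm.\,2.12]{EbenbeckGarcke2} for the uncontrolled system and then estimating only the two new terms produced by the control, namely $-\intO u\h(\varphi)(\mu+\chi\sigma)\dx$ in the energy estimate and $\intO u\h(\varphi)\delt\varphi\dx$ in the higher-order estimate. You correctly identify the one point that actually needs checking -- that $u$ enters only through $-u\h(\varphi)$ and that all constants depend on $u$ only through $R$ -- so in substance your proposal matches the (cited) proof.

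One step in your sketch would fail as literally written. In the basic energy identity you test \eqref{EQ:CHB3} with $\mu$ alone and are left with $\chi\intO\sigma\,\delt\varphi\dx$ on the right-hand side. At this stage of the argument you have no bound on $\delt\varphi$ (only on $\grad\mu$), and since $\sigma$ solves a quasi-static equation you cannot integrate this term by parts in time either; it cannot be absorbed. The standard remedy -- and what the paper does, see \eqref{proof_ex_1} -- is to test \eqref{EQ:CHB3} with $\mu+\chi\sigma$, so that the term $\chi\langle\delt\varphi,\sigma\rangle$ cancels exactly against the one produced by testing \eqref{EQ:CHB4} with $\delt\varphi$; the price is the extra terms $m\chi\intO\grad\mu\cdot\grad\sigma\dx$ and the mean-value estimate \eqref{proof_ex_2}, both of which are harmless. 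A second, smaller soft spot: in the velocity estimate the pressure does not disappear, since $\intO p\,\divergence(\v)\dx=\intO p\,(P\sigma-A)\h(\varphi)\dx$, so you need a bound on $p$ (via Lemma \ref{Stokes_Neumann result} or a Bogovskii-type construction) already at the energy level; your phrase ``the pressure coupling'' glosses over this, and it is one of the genuine difficulties of the Brinkman setting that the cited reference is handling for you.
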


\begin{proof}
	The assertion follows with slight modifications in the proof of \cite[Thm.\,2.12]{EbenbeckGarcke2}. Indeed, testing (\ref{EQ:CHB3}) with $\mu+\chi\sigma$, it turns out that we have to estimate an additional term given by $-\intO u\h(\varphi)(\mu+\chi\sigma)\dx$. Using Hölder's, Young's and Poncaré's inequalities, we obtain
	\begin{align}
	\label{proof_ex_1}
	&\nonumber\left|\intO u\h(\varphi)(\mu+\chi\sigma)\dx\right| = \left|\intO u\h(\varphi)\big((\mu + \chi\sigma-(\mu_{\Omega}-\chi\sigma_{\Omega})\big)\dx + (\mu_{\Omega}+\chi\sigma_{\Omega})\intO u\h(\varphi)\dx\right|\\
	&\quad \leq \frac{1}{4\delta}\norml{\infty}{\h(\varphi)}^2\norml{2}{u}^2 + \delta C_P^2\normL{2}{\grad(\mu+\chi\sigma)}^2
		+ |\mu_{\Omega}+\chi\sigma_{\Omega}|\,\norml{\infty}{\h(\varphi)}\norml{2}{1}\norml{2}{u},
	\end{align}
	for $\delta>0$ arbitrary, where $C_P$ is the constant arising in Poincaré's inequality. Testing (\ref{EQ:CHB4}) with $1$, using (\ref{EQ:CHB6}) and the assumptions on $\psi(\cdot)$, we obtain
	\begin{equation}
	\label{proof_ex_2}|\mu_{\Omega}+\chi\sigma_{\Omega}|\leq C(1+\norml{1}{\psi(\varphi)}).
	\end{equation}
	Plugging in (\ref{proof_ex_2}) into (\ref{proof_ex_1}), using the boundedness of $\h(\cdot)$ and Young's inequality, we obtain
	\begin{equation}
	\label{proof_ex_3}\left|\intO u\h(\varphi)(\mu+\chi\sigma)\dx\right| \leq C_{\delta}(1+\norml{2}{u}^2)(1+\norml{1}{\psi(\varphi)}) + 2\delta C_P^2\left(\normL{2}{\grad\mu}^2 + \chi^2\normL{2}{\grad\sigma}^2\right).
	\end{equation}
	Then, the first term on the right-hand side of this equation can be controlled via a Gronwall argument, whereas the last two terms can be absorbed into the left-hand side of an energy identity. To obtain strong solutions, (\ref{EQ:CHB3}) is tested by $\delt\varphi$, leading to an additional term $\intO u\h(\varphi)\laplace^2\varphi\dx$. Applying Hölder's and Young's inequalities yields
	\begin{equation}
	\label{proof_ex_4}\left|\intO u\h(\varphi)\delt\varphi\dx\right|\leq \norml{2}{u}\norml{\infty}{\h(\varphi)}\norml{2}{\delt\varphi}\leq \frac{1}{4\delta}\norml{2}{u}^2\norml{\infty}{\h(\varphi)}^2 + \delta \norml{2}{\delt\varphi}^2,
	\end{equation}
	for $\delta>0$ arbitrary. Hence, the last term on the right-hand side of this inequality can again be absorbed when choosing $\delta>0$ sufficiently small, whereas the first term can be controlled due to the assumptions on $\h(\cdot)$ and since $\norm{u}_{L^2(L^2)}\leq R$ for all $u\in\UR$. Apart from the estimates needed to deduce \eqref{proof_ex_3} and \eqref{proof_ex_4}, the remaining arguments are exactly the same as in the proof of \cite[Thm.\,2.12]{EbenbeckGarcke2}.
\end{proof}	

\begin{corollary}
	\label{COR:INF}
	Let $u\in\UR$ and $\varphi_0\in H^2_\n(\Omega)$ be arbitrary and let $(\varphi_u,\mu_u,\sigma_u,\v_u,p_u)$ denote the strong solution of the system \textnormal{(CHB)}. Then $\varphi_u$ has the following additional properties:
	\begin{gather*}
		 \varphi_u\in C(H^2), \quad \varphi_u\in C(\overline{\OmegaT}) \twith \norm{\varphi_u}_{C(H^2)\cap C(\overline{\OmegaT})} \le C_6 %, \quad \varphi_u(T) \in H^2_\n(\Omega)
	\end{gather*}
	for some constant $C_6>0$ that depends only on the system parameters and on $R$, $\Omega$, $\Gamma$ and $T$.
\end{corollary}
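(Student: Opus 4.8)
The plan is to derive both additional properties purely from the regularity already secured in Theorem \ref{THM:EXS}, namely $\varphi_u\in H^1(L^2)\cap L^2(H^4)$, together with the uniform bound $\norm{\varphi_u}_{H^1(L^2)\cap L^\infty(H^2)\cap L^2(H^4)}\le C_1$. No new estimate on the PDE is required; the corollary is a functional-analytic refinement of the time-continuity of $\varphi_u$.

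First I would establish $\varphi_u\in C(H^2)$. Since $\varphi_u\in L^2(0,T;H^4)$ and $\delt\varphi_u\in L^2(0,T;L^2)$, the function $\varphi_u$ belongs to the space $W:=\{w\in L^2(H^4):\delt w\in L^2(L^2)\}$. By the classical interpolation-in-time embedding (a Lions--Magenes type result), $W$ embeds continuously into $C([0,T];[H^4,L^2]_{1/2})$, with an embedding constant depending only on $\Omega$ and $T$. On a domain with $C^4$-boundary the interpolation space satisfies $[H^4,L^2]_{1/2}=H^2$ with equivalent norms, since $2=\tfrac12\cdot 4+\tfrac12\cdot 0$. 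Hence $\varphi_u\in C([0,T];H^2)=C(H^2)$, and
\begin{equation*}
\norm{\varphi_u}_{C(H^2)}\le C\,\norm{\varphi_u}_{W}\le C\,\big(\norm{\varphi_u}_{L^2(H^4)}+\norm{\delt\varphi_u}_{L^2(L^2)}\big)\le C\,C_1,
\end{equation*}
which is uniform in $u$ because $C_1$ is.

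Next I would pass to $C(\overline{\OmegaT})$. Because $d\in\{2,3\}$ we have $2>d/2$, so the Sobolev embedding $H^2(\Omega)\hookrightarrow C(\overline\Omega)$ holds with a fixed constant $C_{\mathrm{Sob}}$. Composing with the previous step gives $\varphi_u\in C([0,T];C(\overline\Omega))$, and this latter space is isometrically identified with $C(\overline{\OmegaT})$: continuity in $t$ with values in $C(\overline\Omega)$ (uniform in $x$) is equivalent to joint continuity on $\overline\Omega\times[0,T]$. Therefore $\varphi_u\in C(\overline{\OmegaT})$ with $\norm{\varphi_u}_{C(\overline{\OmegaT})}=\norm{\varphi_u}_{C(C(\overline\Omega))}\le C_{\mathrm{Sob}}\norm{\varphi_u}_{C(H^2)}\le C_6$, and setting $C_6$ as the product of all these fixed constants with $C_1$ yields the claimed uniform bound.

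I expect the only genuinely delicate point to be the precise functional-analytic setup of the first step: one must invoke the \emph{continuous} (not merely compact) embedding of $W$ into $C([0,T];[H^4,L^2]_{1/2})$ and correctly identify the trace interpolation space as $H^2$; the regularity of $\Gamma$ in assumption \textnormal{(A1)} guarantees both the interpolation identity and the Sobolev embedding. Everything else, including the identification $C(C(\overline\Omega))\cong C(\overline{\OmegaT})$, is routine.
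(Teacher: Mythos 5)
Your proposal is correct and follows essentially the same route as the paper: both deduce $\varphi_u\in C(H^2)$ from $\varphi_u\in H^1(L^2)\cap L^2(H^4)$ via the interpolation-in-time embedding into $C\big([0,T];(L^2,H^4)_{\frac12,2}\big)$ together with the identification of that interpolation space with $H^2$, and then conclude $\varphi_u\in C(\overline{\OmegaT})$ from the Sobolev embedding $H^2\hookrightarrow C(\overline\Omega)$ for $d\le 3$. The only cosmetic difference is that the paper cites the real interpolation space $(L^2,H^4)_{\frac12,2}$ (Amann/Triebel) where you write $[H^4,L^2]_{1/2}$, but these coincide for Hilbert spaces, so the argument is the same.
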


\begin{proof}
	First, recall that $H^4$ is continuously embedded in $L^2$. Using a result from interpolation theory \cite[c.\,III,\,Thm.\,4.10.2]{amann} we can conclude that $H^1(L^2) \cap L^2(H^4) \hookrightarrow C\big([0,T];(L^2,H^4)_{\frac 1 2,2}\big)$ where $(L^2,H^4)_{\frac 1 2,2}$ denotes the real interpolation space between $L^2$ and $H^4$ of type $\frac 1 2$. According to \cite[s.\,4.3.1,\,Thm.\,1]{triebel}, it holds that $(L^2,H^4)_{\frac 1 2,2} = H^2$ which proves the first assertion. As $H^2$ is continuously embedded in $C(\overline{\Omega})$, it directly follows that $\varphi_u\in C(\overline{\OmegaT})$ with
	\begin{align*}
		\norm{\varphi_u}_{C(\overline\OmegaT)} = \norm{\varphi_u}_{L^\infty(L^\infty)} \le C_0 \, \norm{\varphi_u}_{L^\infty(H^2)} \le C_0C_1=:C_6
	\end{align*}
	for some constant $C_0\ge 0$ that depends only on $R$, $\Omega$, $\Gamma$ and $T$. This means that the second assertion is established. 
\end{proof}

Due to Theorem \ref{THM:EXS}, we can define an operator that maps any admissible control onto its corresponding state:
\begin{definition}
	\label{DEF:CSO}
	For any $u\in\UR$ we write $(\varphi_u,\mu_u,\sigma_u,\v_u,p_u)$ to denote its induced strong solution of \textnormal{(CHB)} given by Theorem \ref{THM:EXS}.
	Then the operator
	\begin{align*}
		S:\;\UR \to \V_2,\quad u\mapsto S(u):=(\varphi_u,\mu_u,\sigma_u,\v_u,p_u)
	\end{align*}
	is called the \textbf{control-to-state} operator. To be precise, it holds that $S(\UR)\subset \V_1\subset \V_2$.
\end{definition}

\noindent\textbf{Comment.} The control-to-state operator is defined not only for admissible controls but for all controls in $\UR$. This will be especially important in subsection 3.4 because Fréchet differentiability is merely defined for open subsets of $L^2(L^2)$. Unlike the open ball $\UR$, the set $\U$ is closed and its interior is empty. Therefore it makes sense to investigate the control-to-state operator on the open superset $\UR$ instead. 
\\[1ex]
In the following subsections, some properties of the control-to-state operator will be established that are essential for the treatment of optimal control problems.

\subsection{Lipschitz continuity}

The first important property of the control-to-state operator is Lipschitz continuity with respect to the control $u\in\U$. This is given by the following lemma:
\begin{lemma}
	\label{LEM:LIP}
	The control-to-state operator $S$ is Lipschitz continuous in the following sense: 
	\begin{enumerate}
		\itema There exists a constant $L_1>0$ depending only on the system parameters and on $R$, $\Omega$, $\Gamma$ and $T$ such that for all $u,\tu\in\U$:
		\begin{align}
		\label{IEQ:L1}
		\norm{S(u)-S(\tu)}_{\V_2}\leq L_1\norm{u-\tu}_{L^2(L^2)}. 
		\end{align}
%		\begin{align*}
%		\norm{\varphi_u-\varphi_\tu}_{H^1((H^1)^*)\cap L^\infty(H^1)\cap L^2(H^3)}+
%		\norm{\mu_u-\mu_\tu}_{L^2(H^1)} +\norm{\sigma_u-\sigma_\tu}_{L^2(H^1)}\quad & \\
%		+\norm{\v_u-\v_\tu}_{L^2(\H^1)}  +\norm{p_u-p_\tu}_{L^2(L^2)}&\leq L_1\norm{u-\tu}_{L^2(L^2)}. 
%		\end{align*}
		\itemb There exists a constant $L_2> 0$ depending only on $R$, $\Omega$, $\Gamma$ and $T$ such that for all $u,\tu\in\U$:
		\begin{align}
		\label{IEQ:L2}
			\norm{\varphi_u-\varphi_\tu}_{L^\infty(\OmegaT)} &\le L_2\norm{u-\tu}_{L^2(L^2)}^{1/2}.
		\end{align}
	\end{enumerate}
\end{lemma}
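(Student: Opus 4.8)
The plan is to fix two controls $u,\tu\in\U$ with associated states $S(u)=(\varphi_u,\mu_u,\sigma_u,\v_u,p_u)$ and $S(\tu)=(\varphi_\tu,\mu_\tu,\sigma_\tu,\v_\tu,p_\tu)$, and to work with the differences $\varphi:=\varphi_u-\varphi_\tu$, $\mu:=\mu_u-\mu_\tu$, $\sigma:=\sigma_u-\sigma_\tu$, $\v:=\v_u-\v_\tu$, $p:=p_u-p_\tu$, which solve the system obtained by subtracting the two copies of (CHB). The structural point is that, after subtraction, every nonlinearity becomes a difference $F(\varphi_u,\dots)-F(\varphi_\tu,\dots)$ that splits into a bounded coefficient times one of the differences plus a remainder; here the uniform bounds of Theorem \ref{THM:EXS}, the $L^\infty$-bound of Corollary \ref{COR:INF}, the growth estimate \eqref{assumption_on_psi_7} for $\psi'$, and the global Lipschitz property of $\h$ from (A3) are what make these coefficients controllable. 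The target is the relatively weak $\V_2$-regularity (only $L^\infty(H^1)\cap L^2(H^3)$ for $\varphi$, $L^2(H^1)$ for $\mu,\sigma$, $L^2(\H^1)$ for $\v$, $L^2(L^2)$ for $p$), which keeps the estimates below the full state-space norms.

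First I would treat the nutrient. Since \eqref{EQ:CHB5} is stationary and elliptic, rewriting the difference as $-\laplace\sigma+\h(\varphi_u)\sigma=-(\h(\varphi_u)-\h(\varphi_\tu))\sigma_\tu$ with Robin data $\deln\sigma=-K\sigma$ and testing with $\sigma$ yields, for almost every $t$, a pointwise-in-time bound $\normh{1}{\sigma(t)}\le C\norml{2}{\varphi(t)}$; in particular $\sigma$ is controlled in $L^2(H^1)$ by $\varphi$ in $L^2(L^2)$. Next I would handle the Brinkman subsystem \eqref{EQ:CHB1}--\eqref{EQ:CHB2} together with \eqref{EQ:CHB8}: the pair $(\v,p)$ solves a Stokes--Neumann problem of the form \eqref{Stokes_subsystem} with $\F=(\mu_u+\chi\sigma_u)\grad\varphi+(\mu+\chi\sigma)\grad\varphi_\tu$ and divergence datum equal to the corresponding difference of the right-hand sides of \eqref{EQ:CHB1}. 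Applying Lemma \ref{Stokes_Neumann result} for a.e.\ $t$ and integrating in time then bounds $\v$ and $p$ in terms of $\F$ and the divergence datum, and hence ultimately in terms of $\varphi$, $\mu$ and $\sigma$ in suitable norms (using the uniform bounds to estimate the products).

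The heart of the argument is the Cahn--Hilliard part \eqref{EQ:CHB3}--\eqref{EQ:CHB4}. I would derive an energy estimate by testing the difference of \eqref{EQ:CHB3} with $\mu$ and the difference of \eqref{EQ:CHB4} with suitable multipliers (e.g.\ $\varphi$ and $-\laplace\varphi$, with the spatial means of $\mu$ and $\varphi$ handled via the inverse Neumann--Laplacian as in the proof of Theorem \ref{THM:EXS}), so that the leading terms produce $\tfrac12\ddt\normL{2}{\grad\varphi}^2+m\normL{2}{\grad\mu}^2$ together with the dissipation $\normL{2}{\laplace\varphi}^2$. The potential contribution is absorbed using \eqref{assumption_on_psi_7}; the convective term is split as $\divergence(\varphi_u\v_u-\varphi_\tu\v_\tu)=\divergence(\varphi\v_u)+\divergence(\varphi_\tu\v)$, where the second part carries the velocity difference and is estimated by the Brinkman bound from the previous step; and the source difference yields the term $-(u-\tu)\h(\varphi_u)$, which produces exactly the right-hand side $\norm{u-\tu}_{L^2(L^2)}$, the remaining source terms being of the absorbable/Gronwall-admissible type. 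Feeding in the $\sigma$- and $(\v,p)$-estimates, choosing the Young-inequality parameters small enough to absorb gradient terms into the dissipation, and applying Gronwall's lemma gives $\varphi\in L^\infty(H^1)$ and $\mu\in L^2(H^1)$; a final elliptic bootstrap, reading \eqref{EQ:CHB4} as $-\laplace\varphi=\mu-(\psi'(\varphi_u)-\psi'(\varphi_\tu))+\chi\sigma$ with homogeneous Neumann data, then delivers the $L^2(H^3)$-regularity of $\varphi$. Collecting all pieces yields the $\V_2$-estimate \eqref{IEQ:L1} of part (a). I expect the main obstacle to be closing this coupled loop: the velocity difference enters the convection while being itself controlled by $\grad\mu$ and $\grad\varphi$, so the constants and the order of insertion of the nutrient and Brinkman bounds must be arranged carefully to ensure that all velocity, pressure and nutrient contributions are either absorbed into the dissipation or left Gronwall-admissible.

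Finally, part (b) follows from part (a) by interpolation. Using an Agmon-type inequality $\norml{\infty}{w}\le C\normh{1}{w}^{1/2}\normh{2}{w}^{1/2}$ for a.e.\ $t$, I would estimate $\norml{\infty}{\varphi(t)}\le C\normh{1}{\varphi(t)}^{1/2}\normh{2}{\varphi(t)}^{1/2}$, bound the first factor by $\norm{\varphi}_{L^\infty(H^1)}^{1/2}\le\big(L_1\norm{u-\tu}_{L^2(L^2)}\big)^{1/2}$ via part (a), and bound the second factor by the uniform estimate $\normh{2}{\varphi(t)}\le\norm{\varphi_u}_{L^\infty(H^2)}+\norm{\varphi_\tu}_{L^\infty(H^2)}\le 2C_1$ from Theorem \ref{THM:EXS}. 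Taking the supremum over $t\in[0,T]$ combines the small factor with the bounded factor and produces exactly the exponent $1/2$ in \eqref{IEQ:L2}.
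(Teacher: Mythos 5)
Your proposal is correct and follows essentially the same route as the paper: the paper handles part (a) by citing the continuous-dependence estimate of \cite[Thm.\,2.8]{EbenbeckGarcke2} for the uncontrolled system and only estimating the two new terms arising from $u\h(\varphi_u)-\tu\h(\varphi_\tu)$ (tested against $\bar\varphi$ and $\laplace\bar\varphi$), which is exactly the energy/elliptic/Gronwall machinery on the difference system that you reconstruct in detail. Part (b) is proved in the paper by the identical interpolation argument, namely $\norm{\varphi_u-\varphi_\tu}_{L^\infty(L^\infty)}\le C\,\norm{\varphi_u-\varphi_\tu}_{L^\infty(H^1)}^{1/2}\norm{\varphi_u-\varphi_\tu}_{L^\infty(H^2)}^{1/2}$ combined with part (a) and the uniform $L^\infty(H^2)$ bound from Theorem \ref{THM:EXS}.
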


\begin{proof}
	Let $C$ denote a generic nonnegative constant that depends only on $R$, $\Omega$, $\Gamma$ and $T$ and may change its value from line to line. For brevity, we set
	\begin{align*}
	\u := u-\tu
	\tand
	(\bar\varphi,\bar\mu,\bar\v,\bar\sigma,\bar p):=(\varphi_{u},\mu_{u},\v_{u},\sigma_{u},p_{u})-(\varphi_\tu,\mu_\tu,\v_\tu,\sigma_\tu,p_\tu).
	\end{align*}
	In \cite[Thm.\,2.8]{EbenbeckGarcke2} it has already been established that a solution of the Cahn-Hilliard-Brinkman equation without the control term $u\h(\varphi)$ depends Lipschitz continuously on the initial and boundary data. The proof of item (a) proceeds similarly since the following inequalities hold: For any $\delta>0$ and all $u,\tu\in\U$,
	\begin{align}
		\label{EQ:LIP1}
		\big|\big( u\h(\varphi_u) - \tu\h(\varphi_\tu){,}\bar\varphi \big)_{L^2}\big| 
			&\le C\,\norml{2}{\bar u}^2 + C\, \norml{2}{\bar \varphi}^2 + C\,\norml{2}{\tu}\normh{1}{\bar\varphi}^2,\\
		\label{EQ:LIP2}
		\big|\big( u\h(\varphi_u) - \tu\h(\varphi_\tu){,}\laplace\bar\varphi \big)_{L^2}\big| 
			&\le C\delta^{-1}\, \norml{2}{\bar u}^2 + C\delta^{-1}\, \norml{2}{\tu}^2 \normh{1}{\bar\varphi}^2
				+ 2\delta \norml{2}{\laplace\bar\varphi}^2 + \delta \normL{2}{\grad\laplace\bar\varphi}^2.
	\end{align}
	To prove \eqref{EQ:LIP1} and \eqref{EQ:LIP2} we use that $u\h(\varphi_u) - \tu\h(\varphi_\tu) = \u\h(\varphi_u) + \tu\big(\h(\varphi_u) - \h(\varphi_\tu)\big)$. From $\norm{\h(\varphi_u)}_{L^\infty} \le C$ we deduce that
	\begin{align}
		\label{EQ:LIP3}
		|( \u\h(\varphi_u) {,} \bar\varphi )_{L^2}| &\le C\,\left( \norm{\bar u}_{L^2}^2\, +  \norm{\bar \varphi}_{L^2}^2\right)
	\end{align}
	and, by Young's inequality with $\delta>0$,
	\begin{align}
		\label{EQ:LIP4}
		|( \u\h(\varphi_u) {,} \laplace\bar\varphi )_{L^2}| &\le C\delta^{-1}\, \norml{2}{\bar u}^2 + \delta \norml{2}{\laplace\bar\varphi}^2.
	\end{align}
	Moreover, we have
	\begin{align}
		\label{EQ:LIP5}
		\big|\big( \tu\big(\h(\varphi_u) - \h(\varphi_\tu)\big){,} \bar\varphi \big)_{L^2}\big| 
			\le C\, \norml{2}{\tu} \, \norml{4}{\bar\varphi}^2
			\le C\, \norml{2}{\tu} \, \normh{1}{\bar\varphi}^2
	\end{align}
	and, using interpolation and Young's inequality with $\delta>0$, we obtain that
	\begin{align}
		&\big|\big( \tu\big(\h(\varphi_u) - \h(\varphi_\tu)\big){,} \laplace\bar\varphi \big)_{L^2}\big| 
			\le C\, \norml{2}{\tu} \, \norml{6}{\bar\varphi} \, \norml{3}{\laplace\bar\varphi} \notag\\
		&\quad \le C\, \norml{2}{\tu} \, \normh{1}{\bar\varphi} \, \normh{1}{\laplace\bar\varphi} 
			\le C\delta^{-1}\, \norml{2}{\tu}^2 \normh{1}{\bar\varphi}^2 
			+ \delta \norml{2}{\laplace\bar\varphi}^2 + \delta \normL{2}{\grad\laplace\bar\varphi} ^2.
		\label{EQ:LIP6}
	\end{align}
	Now, \eqref{EQ:LIP1} follows from \eqref{EQ:LIP3} and \eqref{EQ:LIP5} while \eqref{EQ:LIP2} follows from \eqref{EQ:LIP4} and \eqref{EQ:LIP6}. For the proof of (b) recall that $\norm{\varphi_u-\varphi_\tu}_{L^\infty(H^2)} \le C$. Hence, by the Gagliardo-Nirenberg inequality,
	\begin{align*}
		&\norm{\varphi_u-\varphi_\tu}_{L^\infty(L^\infty)} 
			\le C\,\norm{\varphi_u-\varphi_\tu}_{L^\infty(L^6)}^{1/2} \, \norm{\varphi_u-\varphi_\tu}_{L^\infty(H^2)}^{1/2} \\
		&\quad\le C\,\norm{\varphi_u-\varphi_\tu}_{L^\infty(H^1)}^{1/2} \, \norm{\varphi_u-\varphi_\tu}_{L^\infty(H^2)}^{1/2} 
			\le L_2\,\norm{u-\tu}_{L^2(L^2)}^{1/2} 
	\end{align*}
	if the constant $L_2$ is chosen suitably.
\end{proof}	

\subsection{A weak compactness property}

As the control-to-state operator is nonlinear, the following result will be essential to prove existence of an optimal control (see Section 4.1):
\begin{lemma}
	\label{LEM:COMP}
	Suppose that $(u_k)_{k\in\N}\subset\U$ is converging weakly in $L^2(L^2)$ to some weak limit $\u\in\U$. Then
		\begin{alignat*}{6}
		&\varphi_{u_k} &&\wto \varphi_\u\quad &&\text{in}\; H^1(L^2)\cap L^2(H^4),\quad
		&&\varphi_{u_k} &&\to \varphi_\u &&\text{in}\; C(W^{1,r})\cap C(\overline{\OmegaT}),~r\in [1,6),\\
		&\mu_{u_k}&&\wto\mu_\u\quad &&\text{in}\; L^2(H^2),
		&&\v_{u_k}&&\wto\v_\u\quad &&\text{in}\; L^2(H^2),\\
		&\sigma_{u_k}&&\wto\sigma_\u\quad &&\text{in}\; L^2(H^2), \quad  
		&&p_{u_k}&&\wto p_\u\quad &&\text{in}\; L^2(H^1),
		\end{alignat*}
after extraction of a subsequence, where the limit $(\varphi_\u,\mu_\u,\sigma_\u,\v_\u,p_\u)$ is the strong solution of \textnormal{(CHB)} to the control $\u \in\U$.
\end{lemma}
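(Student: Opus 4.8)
The plan is to combine the uniform a-priori estimates of Theorem \ref{THM:EXS} and Corollary \ref{COR:INF} with a compactness argument and then to pass to the limit in the weak formulation of (CHB). Since $(u_k)_{k\in\N}\subset\U\subset\UR$, the whole sequence of states $(\varphi_{u_k},\mu_{u_k},\sigma_{u_k},\v_{u_k},p_{u_k})$ is bounded in $\V_1$ with a bound independent of $k$, and $\varphi_{u_k}$ is additionally bounded in $C(H^2)\cap C(\overline{\OmegaT})$. By the Banach--Alaoglu theorem together with the reflexivity of the relevant spaces (and weak-$*$ compactness in the $L^\infty$-components), I would extract a (non-relabeled) subsequence such that each component converges weakly, respectively weakly-$*$, in the spaces listed in the assertion. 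Note that the weak limit $\u$ of $(u_k)$ indeed lies in $\U$, since $\U$ is convex and closed, hence weakly sequentially closed.

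The decisive step is to upgrade the convergence of $\varphi_{u_k}$ to a strong one. Since $\varphi_{u_k}$ is bounded in $L^\infty(H^2)$ while $\delt\varphi_{u_k}$ is bounded in $L^2(L^2)$, the Aubin--Lions--Simon lemma applies: exploiting the compact embeddings $H^2\hookrightarrow\hookrightarrow W^{1,r}$ for $r\in[1,6)$ and $H^2\hookrightarrow\hookrightarrow C(\overline\Omega)$, I obtain relative compactness of $(\varphi_{u_k})$ in $C(W^{1,r})$ and in $C(\overline{\OmegaT})$. Passing to a further subsequence yields $\varphi_{u_k}\to\varphi_\u$ strongly in these spaces. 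Because $\h\in C^2_b(\R)$ and $\psi'\in C^2(\R)$, the uniform $L^\infty$-bound on $\varphi_{u_k}$ from Corollary \ref{COR:INF} together with the uniform convergence just obtained then gives $\h(\varphi_{u_k})\to\h(\varphi_\u)$ and $\psi'(\varphi_{u_k})\to\psi'(\varphi_\u)$ strongly, e.g. in $L^\infty(\OmegaT)$ and in $L^2(L^2)$, respectively.

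With these strong convergences at hand, I would pass to the limit in the weak formulation of each equation of (CHB). The linear contributions (the Laplacians and the viscous and pressure terms of $\T(\v,p)$) pass directly under weak convergence. For the nonlinear terms I rely on the principle that the product of a weakly convergent sequence with a strongly convergent one converges weakly: in particular $u_k\h(\varphi_{u_k})\wto\u\,\h(\varphi_\u)$ in $L^2(L^2)$ (weak convergence of $u_k$ times strong $L^\infty$-convergence of $\h(\varphi_{u_k})$), while $(P\sigma_{u_k}-A)\h(\varphi_{u_k})\wto(P\sigma_\u-A)\h(\varphi_\u)$ and $\h(\varphi_{u_k})\sigma_{u_k}\wto\h(\varphi_\u)\sigma_\u$, and $\varphi_{u_k}\v_{u_k}\wto\varphi_\u\v_\u$ as well as $(\mu_{u_k}+\chi\sigma_{u_k})\grad\varphi_{u_k}\wto(\mu_\u+\chi\sigma_\u)\grad\varphi_\u$ follow from the strong convergence of $\varphi_{u_k}$ in $C(W^{1,r})$. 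The initial condition is preserved since $\varphi_{u_k}\to\varphi_\u$ in $C(\overline{\OmegaT})$ forces $\varphi_\u(0)=\varphi_0$, and the Robin condition for $\sigma$ passes using the weak convergence of the boundary traces induced by the continuity of the trace operator on $H^2$.

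Finally, the limit $(\varphi_\u,\mu_\u,\sigma_\u,\v_\u,p_\u)$ is thereby shown to be a strong solution of (CHB) associated with the control $\u$, and by the uniqueness part of Theorem \ref{THM:EXS} it coincides with $S(\u)$. I expect the main obstacle to be the treatment of the nonlinear coupling terms — above all the product $u_k\h(\varphi_{u_k})$, where only weak convergence of the controls is available and one must exploit the strong $L^\infty$-convergence of $\h(\varphi_{u_k})$ — and, more generally, securing enough strong convergence of $\varphi_{u_k}$ via Aubin--Lions--Simon to handle all the quadratic products appearing in the Brinkman and phase-field equations.
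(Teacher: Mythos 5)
Your proposal follows essentially the same route as the paper: uniform bounds from Theorem \ref{THM:EXS} give weak(-$*$) limits, the Aubin--Lions--Simon lemma (via boundedness in $H^1(L^2)\cap L^\infty(H^2)$) upgrades $\varphi_{u_k}$ to strong convergence in $C(W^{1,r})\cap C(\overline{\OmegaT})$, the nonlinear terms pass to the limit by weak--strong product convergence (in particular $u_k\h(\varphi_{u_k})\wto\u\,\h(\varphi_\u)$), and uniqueness identifies the limit as $S(\u)$. This is correct and matches the paper's argument.
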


\noindent\textbf{Comment.} This result actually means weak compactness of the control-to-state operator restricted to $\U$ since any bounded sequence in $\U$ has a weakly convergent subsequence according to the Banach-Alaoglu theorem. However, this property can not be considered as weak continuity as the extraction of a subsequence is necessary.

\begin{proof}
	Using the uniform bounds that were established in Theorem \ref{THM:EXS} and standard compactness arguments, we can conclude that there exist functions $\varphi$, $\v$, $\mu$, $\sigma$ and $p$ having the desired regularity such that 
	\begin{gather*}
	\varphi_{u_k} \wsto \varphi\quad \text{in}\; H^1(L^2)\cap L^{\infty}(H^2)\cap L^2(H^4),\quad \varphi_{u_k} \wto \varphi\quad \text{in}\; H^1(L^2)\cap L^2(H^4),\quad
	\mu_{u_k}\wto\mu\quad \text{in}\; L^2(H^2),\\
	\sigma_{u_k}\wto\sigma\quad \text{in}\; L^2(H^2),\quad
	\v_{u_k}\wto\v\quad\text{in}\; L^2(H^2), \quad  
	p_{u_k}\wto p\quad \text{in}\; L^2(H^1)
	\end{gather*}
	up to a subsequence. The Aubin-Lions lemma (cf. \cite[Sec.\,8, Cor. 4]{Simon}) implies that $H^1(L^2)\cap L^\infty(H^2)$ is compactly embedded in the space $C(W^{1,r}),~r\in[1,6),$ and thus the convergence $\varphi_{u_k} \to \varphi$ in $C(W^{1,r}),~r\in[1,6),$ directly follows after subsequence extraction. In particular, by the Sobolev embedding $W^{1,r}\subset C(\bar{\Omega}),~r\in (3,6)$, we obtain that $\varphi_{u_k}\to \varphi$ in $C(\overline{\OmegaT})$, whence
	\begin{equation}
	\label{lemma_9_conv}\psi'(\varphi_{u_k})\to\psi'(\varphi)\quad\text{and}\quad\h(\varphi_{u_k}) \to \h(\varphi) \quad \text{in}\; C(\overline{\Omega_T}), \quad k\to\infty.
	\end{equation}
	It remains to show that the quintuple $(\varphi,\v,\mu,\sigma,p)$ is a strong solution of the system (CHB) according to the control $u$ which means it is equal to $(\varphi_\u,\v_\u,\mu_\u,\sigma_\u,p_\u)$. Due to the above convergence result, all linear terms in (CHB) are converging weakly in $L^2(L^2)$ to their respective limit. The nonlinear terms must be treated individually. 
	From \eqref{lemma_9_conv}, we can easily conclude that
	\begin{align*}
	\sigma_{u_k}\h(\varphi_{u_k}) \wto \sigma\h(\varphi) \tand u_k\h(\varphi_{u_k}) \wto \u\h(\varphi) \quad \text{in}\; L^2(\OmegaT), \quad k\to\infty,
	\end{align*}
	since $\norm{\h(\varphi)}_{L^\infty(\OmegaT)}\le C$, $\norm{u_k}_{L^2(L^2)}<R$ and $\norm{\sigma_{u_k}}_{L^2(L^2)}\le C_4$. Recalling that $\varphi_{u_k}\to\varphi$ in $C(W^{1,r})$ $\cap C(\overline{\OmegaT})$ and $\v_{u_k}\wto \v$ in $L^2(H^2)$ as $k\to\infty$, by the product of weak-strong convergence it follows that
	\begin{align*}
	\divergence(\varphi_{u_k}\v_{u_k}) \wto \divergence(\varphi\v) \quad \text{in}\; L^2(\OmegaT), \quad k\to\infty.
	\end{align*}
	Now, let $\zeta\in C_0^\infty(\OmegaT)$ be arbitrary. Then, since $C_0^{\infty}(\OmegaT)\subset L^2(\OmegaT)$, we obtain
	\begin{align*}
	&\intOT \big(\delt\varphi - \divergence(\varphi\v) -m\laplace\mu - (P\sigma-A-\u)\h(\varphi)\big)\,\zeta \dxt \\
	&\quad = \underset{k\to\infty}{\lim} \intOT \big(\delt\varphi_{u_k} - \divergence(\varphi_{u_k}\v_{u_k}) -m\laplace\mu_{u_k} - (P\sigma_{u_k}-A-u_k)\h(\varphi_{u_k})\big)\,\zeta \dxt = 0
	\end{align*}
	and consequently, 
	\begin{align*}
	\delt\varphi - \divergence(\varphi\v) = m\laplace\mu + (P\sigma-A-\u)\h(\varphi)\quad \text{almost everywhere in}\;\OmegaT.
	\end{align*}
	We proceed analogously with the remaining equations of (CHB). This proves that $(\varphi,\mu,\sigma,\v,p)$ is a strong solution of the system (CHB) to the control $\u$ and thus, because of uniqueness, we have $(\varphi,\mu,\sigma,\v,p)=(\varphi_\u,\mu_\u,\sigma_\u,\v_\u,p_\u)$ almost everywhere in $\OmegaT$. 
\end{proof}	
\subsection{The linearized system}  

We want to show that the control-to-state operator is also Fréchet differentiable on the open ball $\UR$ (and therefore especially on its strict subset $\U$). Since the Fréchet derivative is a linear approximation of the control-to-state operator at some certain point $u\in\UR$, it will be given by a linearized version of (CHB):
\begin{subequations}
	\label{EQ:LIN}
\begin{empheq}[left=\textnormal{(LIN)}\empheqlbrace]{align}
\label{EQ:LIN1}
\divergence(\v) &= P\sigma\h(\varphi_u) + (P\sigma_u-A)\h'(\varphi_u)\varphi + F_1&&\text{in}\; \OmegaT,\\
\label{EQ:LIN2}
-\divergence(\T(\v,p)) + \nu\v &= (\mu_u+\chi\sigma_u)\grad\varphi + (\mu+\chi\sigma)\grad\varphi_u + \F &&\text{in}\; \OmegaT,\\
\label{EQ:LIN3}
\delt\varphi + \divergence(\varphi_u\v) + \divergence(\varphi\v_u) &= m\laplace\mu + (P\sigma_u - A - u)\h'(\varphi_u)\varphi \notag\\
&\qquad + P\sigma\h(\varphi_u) + F_2&&\text{in}\; \OmegaT,\\
\label{EQ:LIN4}
\mu &= -\laplace\varphi + \psi''(\varphi_u)\varphi - \chi\sigma + F_3 &&\text{in}\; \OmegaT,\\
\label{EQ:LIN5}
-\laplace\sigma + \h'(\varphi_u)\varphi\sigma_u + \h(\varphi_u)\sigma &=  F_4 &&\text{in}\; \OmegaT,\\[2mm]
\label{EQ:LIN6}
\deln\mu=\deln\varphi &= 0 &&\text{in}\; \GammaT,\\
\label{EQ:LIN7}
\deln\sigma &= -K\sigma &&\text{in}\; \GammaT,\\
\label{EQ:LIN8}
\T(\v,p)\n &= 0 &&\text{in}\; \GammaT,\\[2mm]
\label{EQ:LIN9}
\varphi(0) &= 0	&&\text{in}\; \Omega,
\end{empheq}
\end{subequations}
where $F_i:\OmegaT\to\R,\,1\le i\le 4$ and $\F:\OmegaT\to\R^3$ are given functions that will be specified later on. A weak solution of this linearized system is defined as follows:
\begin{definition}
	Let $u\in\UR$ be arbitrary. Then a quintuple $(\varphi,\mu,\sigma,\v,p)$ is called a weak solution of \textnormal{(LIN)} if
	it lies in $\V_2$ and satisfies the equations
		\begin{alignat}{2}
			\label{WF:LIN0}
			\varphi(0) &= 0 &&\quad \text{a.e. in}\; \Omega, \\
			\label{WF:LIN1}
			\divergence(\v) &= P\sigma\h(\varphi_u) + (P\sigma_u-A)\h'(\varphi_u)\varphi + F_1 &&\quad \text{a.e. in}\; \OmegaT
		\end{alignat}
		and
		\begin{align}
			\label{WF:LIN2}
			&\intO \T(\v,p):\grad\tv + \nu\v\cdot\tv \dx = \intO \big[(\mu_u+\chi\sigma_u)\grad\varphi + (\mu+\chi\sigma)\grad\varphi_u + \F\big]\cdot\tv\dx, \\
			\nonumber
			&\big\langle \delt\varphi , \tvarphi \big\rangle_{H^1} = \intO \hspace{-5pt}  -m\grad\mu\scdot\grad\tvarphi 
				+ \big[(P\sigma_u - A - u)\h'(\varphi_u)\varphi+ P\sigma\h(\varphi_u) + F_2\big]\tvarphi \,\dx \\
			\label{WF:LIN3} &\qquad\qquad\qquad -\intO \big(\divergence(\varphi_u\v) +\divergence(\varphi\v_{u})\big)\tilde{\varphi}\d x,\\
			\label{WF:LIN4}
			&\intO \mu \tmu \dx = \intO \grad\varphi\cdot\grad\tmu + \big[\psi''(\varphi_u)\varphi - \chi\sigma + F_3\big]\tmu \dx, \\
			\label{WF:LIN5}
			&\intO \grad\sigma\cdot\grad\tsigma \dx + K \intG \sigma\tsigma \dS = -\intO \big[\h'(\varphi_u)\varphi\sigma_u + \h(\varphi_u)\sigma\big]\tsigma\dx,
		\end{align}
		for almost every $t\in(0,T)$ and all $\tvarphi,\tmu,\tsigma \in H^1,~\tv \in \H^1$.
\end{definition}

\begin{lemma}
	\label{LEM:EST}
	Let $u\in\UR$ be any control and let $(\varphi_u,\mu_u,\sigma_u,\v_u,p_u)$ denote its corresponding state. Moreover, let $\F\in L^2(\L^2)$, $F_1,F_3\in L^2(H^1)$ and $F_2,F_4\in L^2(L^2)$ be arbitrary and let us assume that the quintuple $(\varphi,\mu,\sigma,\v,p)$ is any weak solution of the system \textnormal{(LIN)}. Then, there exists some constant $C>0$ depending only on the system parameters and on $R$, $\Omega$, $\Gamma$ and $T$ such that:
	\begin{align}
	\label{EST:LIN}
	\norm{(\varphi,\mu,\sigma,\v,p)}_{\V_2} \le C\left(\norm{\grad F_3}_{L^2(L^2)} 
	+ \norm{\F}_{L^2(\L^2)}  + \sum_{i=1}^4 \norm{F_i}_{L^2(L^2)}  \right).
	\end{align}
\end{lemma}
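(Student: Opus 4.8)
Denote by $\mathcal D:=\norm{\grad F_3}_{L^2(L^2)}+\norm{\F}_{L^2(\L^2)}+\sum_{i=1}^4\norm{F_i}_{L^2(L^2)}$ the right-hand side of \eqref{EST:LIN}. The plan is to estimate the five components in a carefully chosen order and to close the argument with a Gronwall inequality whose coefficients are integrable in time thanks to the uniform state bounds of Theorem \ref{THM:EXS}; here $C$ denotes a generic constant depending only on the system parameters and on $R,\Omega,\Gamma,T$, and I freely use the embeddings $H^1\hookrightarrow L^6$, $H^2\hookrightarrow L^\infty$ (valid for $d\le3$), the bounds on $\h,\h'$ from \textnormal{(A3)} and on $\psi''(\varphi_u),\psi'''(\varphi_u)$ from \textnormal{(A4)} together with $\varphi_u\in C(\overline{\OmegaT})$ from Corollary \ref{COR:INF}. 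First I would treat \eqref{WF:LIN5} as an elliptic problem at almost every fixed time by testing with $\tsigma=\sigma$. Since $K>0$, the combination $\normL{2}{\grad\sigma}^2+K\norm{\sigma}_{L^2(\Gamma)}^2$ controls $\normh{1}{\sigma}^2$ via a Poincaré--trace inequality, while the right-hand side is bounded through $\norml{\infty}{\h(\varphi_u)}\le1$, the boundedness of $\h'$ and $\sigma_u\in L^\infty(L^\infty)$; after Young's inequality and integration in time this yields
\begin{align}
\label{PLAN:sigma}
\norm{\sigma}_{L^2(H^1)}\le C\big(\norm{\varphi}_{L^2(L^2)}+\norm{F_4}_{L^2(L^2)}\big),
\end{align}
so that $\sigma$ is slaved to $\varphi$ and the data.

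The heart of the proof is a combined energy estimate for the Cahn--Hilliard and Brinkman parts. I would test \eqref{WF:LIN3} with $\tvarphi=\mu$ and substitute $\mu$ from \eqref{EQ:LIN4} into the pairing $\langle\delt\varphi,\mu\rangle_{H^1}$, which produces $\tfrac12\ddt\normL{2}{\grad\varphi}^2$ together with the dissipation $m\normL{2}{\grad\mu}^2$. Simultaneously I would test \eqref{WF:LIN2} with $\v$ and add the resulting identity; the left-hand side is coercive, $\intO\big(2\eta|\D\v|^2+\nu|\v|^2\big)\dx\ge c\,\normH{1}{\v}^2$ by Korn's inequality since $\eta,\nu>0$, and the pressure is removed by subtracting a divergence lifting $\v_\sharp$ with $\divergence\v_\sharp=g$ (the right-hand side of \eqref{EQ:LIN1}) and $\normH{1}{\v_\sharp}\le C\norml{2}{g}$, so that testing with $\v-\v_\sharp$ kills $\intO p\,\divergence(\v-\v_\sharp)\dx$. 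The dangerous coupling is the convective term, which after integration by parts reads $\intO\varphi_u\v\cdot\grad\mu+\intO\varphi\v_u\cdot\grad\mu$ and is of top order in both $\v$ and $\mu$. The decisive point is the structural cancellation of the Cahn--Hilliard--Brinkman system: summing the two identities, the convective contribution and the Korteweg force combine to
\begin{align}
\label{PLAN:cancel}
\intO\varphi_u\v\cdot\grad\mu+\intO\mu\grad\varphi_u\cdot\v
=-\intO\mu\varphi_u\,\divergence\v+\intG\varphi_u\mu\,\v\cdot\n,
\end{align}
which is of lower order because $\divergence\v=g$ carries no derivatives of $\mu$. The residual terms $\intO\varphi\v_u\cdot\grad\mu\le\delta\normL{2}{\grad\mu}^2+C_\delta\norm{\v_u}_{L^\infty}^2\norml{2}{\varphi}^2$ (with $\norm{\v_u}_{L^\infty}^2\in L^4(0,T)$ by $\v_u\in L^8(\H^2)$), the reaction terms (treated as in the proof of Theorem \ref{THM:EXS}, with the mean of $\mu$ controlled as below), and the $\v_\sharp$-corrections are all absorbed into the two dissipations for $\delta$ small or moved into the Gronwall coefficient. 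Inserting \eqref{PLAN:sigma} and recalling $\varphi(0)=0$, Gronwall's lemma delivers
\begin{align}
\label{PLAN:main}
\norm{\varphi}_{L^\infty(H^1)}+\norm{\v}_{L^2(\H^1)}+\normL{2}{\grad\mu}\le C\,\mathcal D.
\end{align}

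It remains to recover the missing norms. The full $L^2(H^1)$-bound on $\mu$ follows once its mean is controlled: integrating \eqref{EQ:LIN4} over $\Omega$ and using the Neumann condition removes $\laplace\varphi$, so $|\mu_\Omega|\le C(\norml{2}{\varphi}+\norml{2}{\sigma}+\norml{2}{F_3})$, and Poincaré's inequality together with \eqref{PLAN:main} closes the estimate. The pressure is recovered by the inf-sup condition applied to \eqref{WF:LIN2}, bounding $\norm{p}_{L^2(L^2)}$ by $\norm{\v}_{L^2(\H^1)}$ and the $L^2(\L^2)$-norm of the right-hand side of \eqref{EQ:LIN2}, both already controlled. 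Reading \eqref{EQ:LIN4} as the elliptic problem $-\laplace\varphi=\mu-\psi''(\varphi_u)\varphi+\chi\sigma-F_3$ with homogeneous Neumann data and invoking elliptic regularity on the $C^4$-domain gives $\varphi\in L^2(H^3)$, where $\psi''(\varphi_u)\varphi\in L^2(H^1)$ is checked via $\psi'''(\varphi_u)\grad\varphi_u\,\varphi\in L^2$ (using $\grad\varphi_u\in L^\infty(L^6)$, $\varphi\in L^6$) and $\psi''(\varphi_u)\grad\varphi\in L^2$. Finally, $\delt\varphi\in L^2((H^1)^\ast)$ is read off directly from \eqref{WF:LIN3} by testing against an arbitrary $\tvarphi\in H^1$ and bounding each term with the norms already obtained. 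Collecting all contributions yields \eqref{EST:LIN}.

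The main obstacle is exactly the two-way top-order coupling between the Brinkman velocity and the chemical potential: the convective term feeds $\v$ into the $\varphi$-equation at the level of $\grad\mu$, while $\v$ itself is controlled only through $\mu$ and $\grad\varphi$, so a naive separate bound for $\v$ followed by substitution would produce a non-absorbable $\normL{2}{\grad\mu}^2$. The estimate survives solely because of the exact cancellation \eqref{PLAN:cancel}, which degrades this coupling to lower order; making it rigorous --- including the correct treatment of the boundary integral under the no-friction condition \eqref{EQ:LIN8} and the verification that the Gronwall coefficients built from $\norm{\v_u}_{L^\infty}$, $\norm{\mu_u}$ and $\norm{\sigma_u}$ genuinely lie in $L^1(0,T)$ by Hölder in time --- is the delicate part of the argument.
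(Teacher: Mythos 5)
Your route differs genuinely from the paper's. The paper (proof of Proposition \ref{PROP:LIN}, Steps 2--4, to which Lemma \ref{LEM:EST} is deferred) never uses $\mu$ as a test function: it tests the $\varphi$-equation with $\varphi-\laplace\varphi$ and the $\mu$-equation with $-m\laplace(\varphi-\laplace\varphi)$, so the $\grad\mu$ contributions cancel and the dissipation is $m(\norml{2}{\laplace\varphi}^2+\normL{2}{\grad\laplace\varphi}^2)$, giving $L^2(H^3)$ for $\varphi$ directly; $\mu$ is then recovered afterwards by comparison in \eqref{approx_problem_eq_1b} (see \eqref{lin_eq_10}, \eqref{lin_eq_18}). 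You instead run the classical Cahn--Hilliard energy method (test with $\mu$, dissipation $m\normL{2}{\grad\mu}^2$) and recover $L^2(H^3)$ a posteriori by elliptic regularity. Both handle the Brinkman coupling identically (divergence lifting plus the cancellation of $\intO\mu\,\grad\varphi_u\cdot\v\dx$ against the convective term --- note that if you expand $\divergence(\varphi_u\v)=\grad\varphi_u\cdot\v+\varphi_u\divergence\v$ directly in \eqref{WF:LIN3} no boundary integral ever appears, so the point you flag as delicate is actually vacuous), and your $\sigma$-, pressure- and $\delt\varphi$-steps match the paper's.

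The gap is in your central energy identity. Substituting \eqref{EQ:LIN4} into $\langle\delt\varphi,\mu\rangle_{H^1}$ does \emph{not} just produce $\tfrac12\ddt\normL{2}{\grad\varphi}^2$ plus the dissipation: it also produces $\langle\delt\varphi,\psi''(\varphi_u)\varphi\rangle-\chi\langle\delt\varphi,\sigma\rangle+\langle\delt\varphi,F_3\rangle$, none of which you address, and this is precisely where the linearized problem differs from the nonlinear one. The first term equals $\tfrac12\ddt\intO\psi''(\varphi_u)\varphi^2\dx-\tfrac12\intO\psi'''(\varphi_u)\delt\varphi_u\,\varphi^2\dx$; since $\psi''$ changes sign, the resulting energy $\tfrac12\normL{2}{\grad\varphi}^2+\tfrac12\intO\psi''(\varphi_u)\varphi^2\dx$ is not coercive on $H^1$, so you must additionally test with $\varphi$ to control $\norml{2}{\varphi}$ --- a quantity your argument uses throughout (in \eqref{PLAN:sigma}, in the mean of $\mu$, in the convective term) but which a Gronwall bound on $\normL{2}{\grad\varphi}^2$ alone does not deliver (the mean of $\varphi$ is not conserved here because of the source terms). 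The second and third terms cannot be integrated by parts in time, since $\sigma$ solves a purely elliptic problem and $F_3$ has no time regularity; they must be bounded by $\norm{\delt\varphi}_{(H^1)^\ast}\big(\normh{1}{\sigma}+\normh{1}{F_3}\big)$ after a preliminary pointwise-in-time comparison estimate for $\norm{\delt\varphi}_{(H^1)^\ast}$ in terms of $\normL{2}{\grad\mu}$, $\normH{1}{\v}$ and lower-order quantities, whose square must then be absorbed into the dissipation. All of this is repairable, but it is exactly the obstruction that the paper's choice of test functions ($\varphi-\laplace\varphi$ and $-m\laplace(\varphi-\laplace\varphi)$) is designed to avoid, and as written your proof does not go through without it.
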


We do not give a detailed proof of this lemma but merely refer to the proof of Proposition \ref{PROP:LIN} where we show that the approximate solutions constructed by a Galerkin scheme satisfy the inequality \eqref{EST:LIN} (see \eqref{lin_eq_24}). The estimates established in this approach can be carried out in the same way for any weak solution $(\varphi,\mu,\sigma,\v,p)$ of (LIN) which proves the above lemma. 
\\[1ex]
Existence and uniqueness of weak solutions to the system (LIN) is established by the following proposition:
\begin{proposition}
	\label{PROP:LIN}
	Let $u\in\UR$ be any control and let $(\varphi_u,\mu_u,\sigma_u,\v_u,p_u)$ denote its corresponding state. Moreover, let $\F\in L^2(\L^2)$, $F_1,F_3\in L^2(H^1)$ and $F_2,F_4\in L^2(L^2)$ be arbitrary. Then the system \textnormal{(LIN)}
	has a unique weak solution $(\varphi,\mu,\sigma,\v,p)$. 
%	Moreover, there exists some constant $C>0$ depending only on the system parameters and on $R$, $\Omega$, $\Gamma$ and $T$ such that:
%	\begin{align}
%	%\label{EST:LIN}
%		\norm{(\varphi,\mu,\sigma,\v,p)}_{\V_2} \le C\left(\norm{\grad F_3}_{L^2(L^2)} 
%		+ \norm{\F}_{L^2(\L^2)}  + \sum_{i=1}^4 \norm{F_i}_{L^2(L^2)}  \right).
%	\end{align}
\end{proposition}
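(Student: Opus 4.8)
The plan is to prove existence by a Faedo--Galerkin approximation combined with the a priori estimate of Lemma \ref{LEM:EST}, and to obtain uniqueness directly from that same estimate. Since the system \textnormal{(LIN)} is linear in the unknowns $(\varphi,\mu,\sigma,\v,p)$ (the coefficients $\varphi_u,\mu_u,\sigma_u,\v_u,p_u$ being fixed functions of the prescribed regularity supplied by Theorem \ref{THM:EXS} and Corollary \ref{COR:INF}), uniqueness is immediate: the difference of two weak solutions is itself a weak solution of \textnormal{(LIN)} with $F_1=\dots=F_4=0$ and $\F=\mathbf 0$, so \eqref{EST:LIN} forces $\norm{(\varphi,\mu,\sigma,\v,p)}_{\V_2}=0$. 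Hence the entire effort lies in the existence part, and in fact in the uniform estimates that also establish Lemma \ref{LEM:EST} (which is why its proof is deferred to here).

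For existence I would first discretize only the Cahn--Hilliard pair. Let $\{w_i\}_{i\in\N}$ be the eigenfunctions of $-\laplace_N$, forming an orthonormal basis of $L^2$ and an orthogonal basis of $H^1$ and $H^2_\n$, and seek $\varphi^n(t)=\sum_{i=1}^n a_i^n(t)w_i$ and $\mu^n(t)=\sum_{i=1}^n b_i^n(t)w_i$. The variables $\sigma$ and $(\v,p)$ are slaved to $(\varphi^n,\mu^n)$: given $\varphi^n(t)$, the nutrient $\sigma^n(t)\in H^1$ is defined as the unique solution of the elliptic problem \eqref{WF:LIN5} with $\varphi$ replaced by $\varphi^n$, which exists by Lax--Milgram since the bilinear form $\intO \grad\sigma\scdot\grad\tsigma + \h(\varphi_u)\sigma\tsigma \dx + K\intG \sigma\tsigma \dS$ is coercive on $H^1$ (using $\h(\varphi_u)\ge 0$, $K>0$ and a trace/Poincaré argument); note this equation decouples from $\mu$ and $\v$. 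Given $(\varphi^n,\mu^n,\sigma^n)$, the pair $(\v^n,p^n)$ is defined as the unique solution of the Stokes--Brinkman system \eqref{Stokes_subsystem} provided by Lemma \ref{Stokes_Neumann result}, with divergence datum $g$ read off from \eqref{WF:LIN1} and body force $\f$ from the right-hand side of \eqref{WF:LIN2}. Inserting this ansatz into the discrete versions of \eqref{WF:LIN3}--\eqref{WF:LIN4} with test functions $w_j$, $j=1,\dots,n$, and using that $\sigma^n,\v^n,p^n$ depend linearly and boundedly on $(a^n,b^n)$, reduces the problem to a linear first-order system of ODEs for $a^n$ (after eliminating $b^n$ via \eqref{WF:LIN4}) whose coefficients are integrable in time; Carathéodory's theorem then yields a unique global solution on $[0,T]$ with $a^n(0)=0$, matching \eqref{WF:LIN0}.

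The heart of the argument is the derivation of bounds on $(\varphi^n,\mu^n,\sigma^n,\v^n,p^n)$ uniform in $n$. Here I would test \eqref{WF:LIN5} with $\sigma^n$ to control $\sigma^n$ in $L^2(H^1)$ by $\varphi^n$ and $F_4$; test \eqref{WF:LIN3} with $\mu^n$ and \eqref{WF:LIN4} with $\delt\varphi^n$ and add the two (the standard Cahn--Hilliard pairing) to control $\grad\varphi^n$ in $L^\infty(L^2)$ and $\grad\mu^n$ in $L^2(L^2)$; control the mean values $\varphi^n_\Omega$ and $\mu^n_\Omega$ separately by testing with the constant eigenfunction; recover $\varphi^n$ in $L^2(H^3)$ from elliptic regularity applied to the relation \eqref{WF:LIN4}; bound $\delt\varphi^n$ in $L^2((H^1)^*)$ directly from \eqref{WF:LIN3}; and use the Stokes estimate \eqref{Stokes_Neumann_estimate} to bound $\v^n$ and $p^n$ by the remaining quantities. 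The main obstacle is absorbing the coupling terms, in particular the convective contributions $\divergence(\varphi_u\v^n)+\divergence(\varphi^n\v_u)$ and the products involving $\mu_u,\sigma_u$, whose coefficients have only the limited regularity $\varphi_u\in L^\infty(H^2)\cap C(\overline{\OmegaT})$, $\mu_u\in L^\infty(L^2)\cap L^2(H^2)$, $\sigma_u\in C(H^1)\cap L^\infty(H^2)$, $\v_u\in L^8(\H^2)$ from Theorem \ref{THM:EXS}. These must be handled by Hölder, Gagliardo--Nirenberg and Young's inequality with a small parameter $\delta$ so that the highest-order norms are shifted to the left-hand side, after which Gronwall's lemma closes the estimate and yields \eqref{EST:LIN} uniformly in $n$.

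Finally, the uniform bounds permit the extraction of a subsequence with $\varphi^n\wto\varphi$, $\mu^n\wto\mu$, $\sigma^n\wto\sigma$, $\v^n\wto\v$ and $p^n\wto p$ in the spaces dictated by $\V_2$ (weakly-$\ast$ where appropriate). Because every term of \textnormal{(LIN)} is linear in the unknowns, passage to the limit in the weak formulation \eqref{WF:LIN1}--\eqref{WF:LIN5} is routine and needs no compactness beyond weak convergence; an Aubin--Lions argument provides strong convergence of $\varphi^n$ sufficient to pass to the limit in the initial condition and confirm $\varphi(0)=0$. The limit $(\varphi,\mu,\sigma,\v,p)\in\V_2$ is then a weak solution of \textnormal{(LIN)}, which together with the uniqueness established above completes the proof.
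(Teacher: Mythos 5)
Your overall architecture coincides with the paper's: a semi-Galerkin scheme in which only $(\varphi,\mu)$ are discretized by Neumann--Laplace eigenfunctions while $\sigma$ and $(\v,p)$ are slaved to the discrete pair via Lax--Milgram and Lemma \ref{Stokes_Neumann result}, reduction to a linear ODE system, uniform bounds, weak passage to the limit, and uniqueness from linearity together with the estimate \eqref{EST:LIN}. All of that is sound. The gap is in the one step you rightly identify as ``the heart of the argument'': the choice of test functions for the energy estimate. The standard Cahn--Hilliard pairing (test \eqref{WF:LIN3} with $\mu^n$ and \eqref{WF:LIN4} with $\delt\varphi^n$) produces only $\ddt\normL{2}{\grad\varphi^n}^2$ and $m\normL{2}{\grad\mu^n}^2$ on the left-hand side. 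It does \emph{not} put $\normH{1}{\v^n}^2$ or $\norml{2}{\laplace\varphi^n}^2+\normL{2}{\grad\laplace\varphi^n}^2$ there, and without those terms the coupling cannot be closed. Concretely: testing with $\mu^n$ produces $\intO \divergence(\varphi_u\v^n)\,\mu^n\dx$, which you must bound by $C\normH{1}{\v^n}\norml{2}{\mu^n}$; the Stokes estimate gives $\normH{1}{\v^n}\le C\big(\norml{2}{\mu^n}+\normh{1}{\varphi^n}+\dots\big)$ because the body force contains $(\mu^n+\chi\sigma^n)\grad\varphi_u$ at full strength, and $\norml{2}{\mu^n}\le C\normL{2}{\grad\mu^n}+C(\dots)$ by Poincaré plus a mean-value computation. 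The resulting term $C\normL{2}{\grad\mu^n}^2$ carries a constant built from $\norm{\varphi_u}_{H^2}$ and the Stokes constant, which is in no way smaller than $m$, so it cannot be absorbed into $m\normL{2}{\grad\mu^n}^2$. There is no small parameter anywhere in this loop.

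The paper escapes this by a different combination: it tests the discrete $\varphi$-equation with $(1-\laplace)\varphi_k$, the discrete $\mu$-equation with $m(\laplace^2-\laplace)\varphi_k$, and the Brinkman equation with $\v_k-\w_k$, where $\w_k$ is a Bogovskii-type lifting of the (nonzero) divergence datum. This places $\ddt\normh{1}{\varphi_k}^2$, $m\big(\norml{2}{\laplace\varphi_k}^2+\normL{2}{\grad\laplace\varphi_k}^2\big)$ \emph{and} $2\eta\normL{2}{\D\v_k}^2+\nu\normL{2}{\v_k}^2$ simultaneously on the left. The convective terms are then absorbed via $\delta\normH{1}{\v_k}^2$ and the interpolation $\norml{2}{\laplace\varphi_k}\le\normL{2}{\grad\varphi_k}^{1/2}\normL{2}{\grad\laplace\varphi_k}^{1/2}$, and the dangerous quantity $\norml{2}{\mu_k+\chi\sigma_k}$ is estimated (see \eqref{lin_eq_10}) by $\delta_6\normL{2}{\grad\laplace\varphi_k}$ plus lower-order terms --- which is only possible because $\normL{2}{\grad\laplace\varphi_k}^2$ is available on the left with the free constant $m$. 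Your plan also omits testing the Brinkman equation in the energy identity altogether (you invoke \eqref{Stokes_Neumann_estimate} only a posteriori), and it is precisely the combination of the Brinkman test with the higher-order Cahn--Hilliard tests that breaks the circularity. A secondary, repairable issue: testing \eqref{WF:LIN4} with $\delt\varphi^n$ produces $\intO\psi''(\varphi_u)\varphi^n\delt\varphi^n\dx$, which is not a perfect time derivative for the linearized equation and forces you to control $\intO\psi'''(\varphi_u)\delt\varphi_u|\varphi^n|^2\dx$ and the sign-indefinite quantity $\intO\psi''(\varphi_u)|\varphi^n|^2\dx$; this can be handled, but it is another reason the paper's test functions are the natural ones here.
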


\begin{proof}
\textit{Step 1: Galerkin approximation.}
We will construct approximate solutions by applying a Galerkin approximation with respect to $\varphi$ and $\mu$ and at the same time solve for $\sigma,\,\v$ and $p$ in the corresponding whole function spaces. As Galerkin basis for $\varphi$ and $\mu$, we will use the eigenfunctions of the Neumann-Laplace operator $\{w_i\}_{i\in\N}$ that form a Schauder basis of $L^2$. We will choose $w_1 = 1$.
By elliptic regularity, we see that $w_i\in H_\n^2\cap H^4$ and for every $g\in H_\n^2$ with $g_k\coloneqq\sum_{i=1}^{k}(g{,}w_i)_{L^2}w_i$ we obtain
\begin{equation*}
\laplace g_k = \sum_{i=1}^{k}(g{,}w_i)_{L^2}\laplace w_i = -\sum_{i=1}^{k}(g{,}\lambda_i w_i)_{L^2}w_i = \sum_{i=1}^{k}(g{,}\laplace w_i)_{L^2}w_i = \sum_{i=1}^{k}(\laplace g{,}w_i)_{L^2}w_i,
\end{equation*}
where $\lambda_i$ is the corresponding eigenvalue to $w_i$. Therefore, $\laplace g_k$ converges strongly to $\laplace g$ in $L^2$. Again using elliptic regularity theory, we obtain that $g_k$ converges strongly to $g$ in $H_\n^2$. Thus the eigenfunctions $\{w_i\}_{i\in \N}$ of the Neumann-Laplace operator form an orthonormal Schauder basis in $L^2$ which is also a Schauder basis of $H_\n^2$. 
We fix $k\in\N$ and define $\mathcal{W}_k\coloneqq \text{span}\{w_1,...,w_k\}$. Our aim is to find functions of the form 
\begin{equation*}
\varphi_k(t,x)=\sum_{i=1}^{k}a_i^k(t)w_i(x),~~\mu_k(t,x)=\sum_{i=1}^{k}b_i^k(t)w_i(x)
\end{equation*}
satisfying for all $v\in\mathcal{W}_k$ the following approximation problem
\begin{subequations}
	\begin{align}
	\nonumber\intO\delt\varphi_k v\dx &= \intO-m\grad\mu_k\cdot\grad v +\big((P\sigma_u-A-u)\h'(\varphi_u)\varphi_k + P\sigma_k\h(\varphi_u) + F_2\big)v\dx\\
	\label{approx_problem_eq_1a}&\quad -\intO \big(\divergence(\varphi_u\v_k) + \divergence(\varphi_k\v_u)\big)v\dx,\\
	\label{approx_problem_eq_1b}\intO \mu_kv\dx &= \intO \grad\varphi_k\cdot \grad v+ \big(\psi''(\varphi_u)\varphi_k - \chi\sigma_k + F_3\big)v\dx,\\
	\label{approx_problem_eq_initial}\varphi_{k}(0,\cdot) &= 0,
	\end{align}
	where the nutrient concentration $\sigma_k$ is defined as the unique solution of
	\begin{alignat}{3}
	\label{approx_problem_eq_1c}0&=-\laplace\sigma_k + \h'(\varphi_u)\varphi_k\sigma_u + \h(\varphi_u)\sigma_k -  F_4 &&\quad\text{in }\Omega,\\
	\label{approx_problem_eq_1cc}\del_\n \sigma_k &= -K\sigma_k&&\quad \text{on }\Gamma,
	\end{alignat}
	 and the velocity $\v_k$ and the pressure $p_k$ are defined as the unique solution of (\ref{Stokes_subsystem}) with 
	\begin{alignat}{3}
	\label{approx_problem_eq_1d}-\divergence(\T(\v_k,p_k))+\nu\v_k &= (\mu_u + \chi\sigma_u)\grad\varphi_k + (\mu_k+\chi\sigma_k)\grad\varphi_u + \F &&\quad\text{a.e. in }\Omega,\\
	\label{approx_problem_eq_1e}\divergence(\v_k) &= P\sigma_k\h(\varphi_u) + (P\sigma_u-A)\h'(\varphi_u)\varphi_k + F_1&&\quad\text{a.e. in }\Omega,\\
	\label{approx_problem_eq_1f}\T(\v_k,p_k)\mathbf{n} &= \mathbf{0}&&\quad\text{a.e. on }\del\Omega.
	\end{alignat}
\end{subequations}
Using the continuous embedding $H_\n^2\hookrightarrow L^{\infty}$, the assumptions on $\h(\cdot), \F, F_1$ and Theorem \ref{THM:EXS}, it is straightforward to verify that 
\begin{align*}
&\big((\mu_u + \chi\sigma_u)\grad\varphi_k + (\mu_k+\chi\sigma_k)\grad\varphi_u + \F \big) \in L^2 \tand
&\big(P\sigma_k\h(\varphi_u) + (P\sigma_u-A)\h'(\varphi_u)\varphi_k + F_1\big)\in H^1,
\end{align*}
almost everywhere in $(0,T)$. Therefore, by Lemma \ref{Stokes_Neumann result}, we obtain that $(\v_k,p_k)\in \H^2\times H^1$ for almost every $t\in (0,T)$ and (\ref{approx_problem_eq_1d})-(\ref{approx_problem_eq_1f}) is fulfilled almost everywhere in the respective sets. After some straightforward calculations, it can be verified that (\ref{approx_problem_eq_1a})-(\ref{approx_problem_eq_1f}) is equivalent to a linear system of $k$ ODEs in the $k$ unknown $(a_1^k,...,a_k^k)^T\eqqcolon \a_k$. Due to the assumptions on $\psi(\cdot),~\h(\cdot)$, the stability of \eqref{approx_problem_eq_1c}-\eqref{approx_problem_eq_1cc} and (\ref{approx_problem_eq_1d})-(\ref{approx_problem_eq_1f}) under perturbations and Theorem \ref{THM:EXS}, the theory of ODEs (see e.g. \cite[Chap. 3, Problem 1]{CoddingtonLevinson}) yields the existence of a unique $\a_k\in W^{1,1}((0,T);\R^k)$ for each $k\in\N$. 
\\[1ex]
Using elliptic regularity theory, we obtain $\sigma_k\in L^2(H^2)$ as the unique strong solution of \eqref{approx_problem_eq_1c}-\eqref{approx_problem_eq_1cc} and then we define $\b_k\coloneqq (b_1^k,...,b_k^k)^T$ using (\ref{approx_problem_eq_1b}). Hence, the Galerkin scheme yields the existence of a unique solution triple $(\varphi_k,\mu_k,\sigma_k)\in (W^{1,1}(0,T;H_\n^2\cap H^4))^2 \times L^2(H^2)$. Finally, we can define $(\v_k,p_k)$ as the unique solution of the subsystem (\ref{approx_problem_eq_1d})-(\ref{approx_problem_eq_1f}) and, with similar arguments as above, it follows that $\v_k(t) \in\H^2$ and $p_k(t)\in H^1$ for almost every $t\in [0,T]$. We remark that $(\varphi_k,\mu_k)\in C(H_\n^2\cap H^4)^2$ and (\ref{approx_problem_eq_1a})-(\ref{approx_problem_eq_1b}), (\ref{approx_problem_eq_1c})-(\ref{approx_problem_eq_1cc}), (\ref{approx_problem_eq_1d})-(\ref{approx_problem_eq_1f}) are fulfilled almost everywhere in $(0,T)$. 
\\[1ex]
In the following, we will show a-priori-estimates for the solutions of the approximating system (\ref{approx_problem_eq_1a})-(\ref{approx_problem_eq_1f}). Therefore, we use a generic constant $C$ which may change it's value from one line to another, but has to be independent of $k\in \N$.
\\[1ex]
\textit{Step 2:}
Applying \cite[Sec.\,III.3, Ex. 3.16]{Galdi}, there exists a solution $\w_k\in \H^1$ of
\begin{alignat*}{3}
\divergence(\w_k) &= P\sigma_k \h(\varphi_u) + (P\sigma_u - A) \h'(\varphi_u)\varphi_k + F_1&&\quad\text{in }\Omega,\\
\mathbf{w_k} &= \left(\frac{1}{|\partial \Omega|}\int_{\Omega}P\sigma_k \h(\varphi_u) + (P\sigma_u - A) \h'(\varphi_u)\varphi_k + F_1\dx\right)\mathbf{n}&&\quad\text{on }\del \Omega,
\end{alignat*}
satisfying
\begin{equation}
\label{lin_eq_0} \normH{1}{\w_k}\leq C\norml{2}{P\sigma_k \h(\varphi_u) + (P\sigma_u - A) \h'(\varphi_u)\varphi_k + F_1}.
\end{equation}
Then, multiplying (\ref{approx_problem_eq_1d}) with $\v_k-\w_k$, choosing $v= a_i^k(\lambda_iw_i + w_i)$ in (\ref{approx_problem_eq_1a}), $v= ma_i^k\lambda_i(\lambda_iw_i + w_i)$ in (\ref{approx_problem_eq_1b}), summing the resulting identities over $i=1,...,k$, integrating by parts and adding the resulting equations, we obtain
\begin{align}
\nonumber&\ddt\frac{1}{2}\intO |\varphi_k|^2 + |\grad\varphi_k|^2\dx + m\intO |\laplace\varphi_k|^2+|\grad\laplace\varphi_k|^2\dx + \intO 2\eta|\D\v_k|^2+\nu|\v_k|^2\dx \\
\nonumber &\quad= m\intO \grad (\psi''(\varphi_u)\varphi_k -\chi\sigma_k+ F_3)\cdot\grad\laplace\varphi_k\dx+ \intO (\divergence(\varphi_u\v_k)+\divergence(\varphi_k\v_u)-F_2)\laplace\varphi_k\dx\\
\nonumber &\qquad - \intO \big((P\sigma_u-A-u)\h'(\varphi_u)\varphi_k + P\sigma_k\h(\varphi_u)\big)\laplace\varphi_k \dx + m\intO (\psi''(\varphi_u)\varphi_k-\chi\sigma_k + F_3)\laplace\varphi_k\dx\\
\nonumber &\qquad + \intO \big((P\sigma_u-A-u)\h'(\varphi_u)\varphi_k + P\sigma_k\h(\varphi_u) + F_2-\divergence(\varphi_u\v_k)-\divergence(\varphi_k\v_u)\big)\varphi_k\dx\\
\label{lin_eq_1} &\qquad + \intO \big((\mu_u+\chi\sigma_u)\grad\varphi_k + (\mu_k+\chi\sigma_k)\grad\varphi_u+\F\big)\cdot(\v_k-\w_k)+  2\eta\D\v_k\colon\grad\w_k + \nu\v_k\cdot\w_k\dx,
\end{align} 
where we used (\ref{approx_problem_eq_1e})-(\ref{approx_problem_eq_1f}).
In what follows, we will estimate the terms on the right-hand side of (\ref{lin_eq_1}) individually and frequently use Hölder's and Young's inequalities. \newline Due to the boundedness of $\psi''(\varphi_u),\psi'''(\varphi_u)\in L^{\infty}(\OmegaT)$ and $\grad\varphi_u\in L^{\infty}(\L^6)$ and the Sobolev embedding $H^1\subset L^3(\Omega)$, we calculate
\begin{align}
\nonumber &\left|m\intO \grad(\psi''(\varphi_u)\varphi_k)\cdot\grad\laplace\varphi_k\dx\right| = \left|\intO(\psi'''(\varphi_u)\varphi_k\grad\varphi_u+\psi''(\varphi_u)\grad\varphi_k)\cdot\grad\laplace\varphi_k\dx\right|\\
&\quad \leq C\left(\norml{3}{\varphi_k}\normL{6}{\grad\varphi_u} + \normL{2}{\grad\varphi_k}\right)\normL{2}{\grad\laplace\varphi_k}
\label{lin_eq_2} \leq C\normh{1}{\varphi_k}^2+\tfrac{m}{16}\normL{2}{\grad\laplace\varphi_k}^2.
\end{align}
For the next two terms, we obtain
\begin{equation}
\label{lin_eq_3} \left|\intO m \grad(F_3-\chi\sigma_k)\cdot\grad\laplace\varphi_k\dx\right|\leq C\left(\normL{2}{\grad F_3}^2 + \normL{2}{\grad\sigma_k}^2\right) + \tfrac{m}{16}\normL{2}{\grad\laplace\varphi_k}^2.
\end{equation}
Since $\deln\varphi_k = 0$ on $\GammaT$, we know that
\begin{equation}
\label{lin_eq_3a}\norml{2}{\laplace\varphi_k}\leq \normL{2}{\grad\varphi_k}^{1/2}\normL{2}{\grad\laplace\varphi_k}^{1/2}.
\end{equation}
Applying the boundedness of $\varphi_u\in L^{\infty}(H^2)$, we conclude that
\begin{align}
\nonumber &\left|\intO \divergence(\varphi_u\v_k)\laplace\varphi_k\dx\right|
	= \left|\intO (\grad\varphi_u\cdot\v_k+ \varphi_u\divergence(\v_k))\laplace\varphi_k\dx\right|
	\leq C\normh{2}{\varphi_u}\normH{1}{\v_k}\norml{2}{\laplace\varphi_k}\\
&\quad \leq \delta_1\normH{1}{\v_k}^2 + C\normL{2}{\grad\varphi_k}\normL{2}{\grad\laplace\varphi_k}
	\label{lin_eq_4} \leq \delta_1\normH{1}{\v_k}^2 + \tfrac{m}{16}\normL{2}{\grad\laplace\varphi_k}^2 +C\normL{2}{\grad\varphi_k}^2,
\end{align}
with $\delta_1>0$ to be chosen later. Using the Sobolev embeddings $H^1\subset L^3,~\H^1\subset \L^6$ and $\H^2\subset \L^{\infty}$, we infer that
\begin{align}
\nonumber &\left|\intO \divergence(\varphi_k\v_u)\laplace\varphi_k\dx\right|
	%= \left|\intO (\grad\varphi_k\cdot\v_u + \varphi_k\divergence(\v_u))\laplace\varphi_k\dx\right|\\
	\leq C\left(\normL{2}{\grad\varphi_k}\normL{\infty}{\v_u} + \norml{3}{\varphi_k}\norml{6}{\divergence(\v_u)}\right)\norml{2}{\laplace\varphi_k}\\
&\quad\leq C\normH{2}{\v_u}\normh{1}{\varphi_k}\norml{2}{\laplace\varphi_k}
	\label{lin_eq_5} \leq C\normH{2}{\v_u}^2\normh{1}{\varphi_k}^2 + \tfrac{m}{16}\norml{2}{\laplace\varphi_k}^2.
\end{align}
Since $\h(\varphi_u),\h'(\varphi_u),\psi''(\varphi_u)\in L^{\infty}(\OmegaT),\,\sigma_u\in L^{\infty}(L^6)$ with bounded norm, we easily obtain that
\begin{align}
\nonumber &\left|\intO \big(-F_2 - (P\sigma_u-A)\h'(\varphi_u)\varphi_k - P\sigma_k\h(\varphi_u) +m(\psi''(\varphi_u)\varphi_k-\chi\sigma_k+F_3)\big)\laplace\varphi_k\dx\right|\\
\label{lin_eq_6}&\quad \leq C\left(\normh{1}{\varphi_k}^2 + \norml{2}{\sigma_k}^2 + \norml{2}{F_2}^2 + \norml{2}{F_3}^2\right) + \tfrac{m}{16}\norml{2}{\laplace\varphi_k}^2.
\end{align}
With similar arguments and using the Sobolev embedding $H^1\subset L^4$, it follows that
\begin{align}
\nonumber\left|\intO \big((P\sigma_u-A-u)\h'(\varphi_u)\varphi_k + P\sigma_k\h(\varphi_u) + F_2\big)\varphi_k\dx\right| &\leq C(1+\norml{2}{u}^2)\normh{1}{\varphi_k}^2 \\
\label{lin_eq_6a}&\quad + C\left(\norml{2}{\sigma_k}^2 + \norml{2}{F_2}^2\right).
\end{align}
Again using the boundedness of $\h'(\varphi_u)\in L^{\infty}(\OmegaT)$, the Sobolev embedding $H^1\subset L^6$ and Gagliardo-Nirenberg's inequality, we calculate
\begin{align*}
\left|\intO u\h'(\varphi_u)\varphi_k\laplace\varphi_k\dx\right|&\leq C\norml{2}{u}\norml{6}{\varphi_k}\norml{3}{\laplace\varphi_k}\\
	&\leq C\norml{2}{u}\norml{6}{\varphi_k}\norml{2}{\laplace\varphi_k}^{1/2}\left(\norml{2}{\laplace\varphi_k}^{\frac{1}{2}} + \normL{2}{\grad\laplace\varphi_k}^{1/2}\right)\\
& \leq C_{\delta_2,\delta_3}\norml{2}{u}^2\norml{6}{\varphi_k}^2 + \delta_2 \norml{2}{\laplace\varphi_k}^2 + 	\delta_3\norml{2}{\laplace\varphi_k}\normL{2}{\grad\laplace\varphi_k}\\
&\leq C_{\delta_2,\delta_3}\norml{2}{u}^2\norml{6}{\varphi_k}^2 + (\delta_2 + \delta_3)\norml{2}{\laplace\varphi_k}^2 +  \tfrac{\delta_3}{4}\normL{2}{\grad\laplace\varphi_k}^2,
\end{align*}
with $\delta_2,\delta_3>0$ arbitrary. Then, choosing $\delta_2,\delta_3$ sufficiently small, we conclude that
\begin{equation}
\label{lin_eq_7} \left|\intO u\h'(\varphi_u)\varphi_k\laplace\varphi_k\dx\right|\leq C\norml{2}{u}^2\norml{6}{\varphi_k}^2 + \tfrac{m}{16}\left( \norml{2}{\laplace\varphi_k}^2 +  \normL{2}{\grad\laplace\varphi_k}^2\right).
\end{equation}
Due to the Sobolev embeddings $H^1\subset L^p,~\H^1\subset \L^p,~p\in [1,6],$ and the boundedness of $\varphi_u\in L^{\infty}(H^1)$, we obtain
\begin{align}
\nonumber &\left|\intO \big(\divergence(\varphi_u\v_k)+\divergence(\varphi_k\v_u)\big)\varphi_k\dx\right| = \left|\intO \big(\grad\varphi_u\cdot\v_k+ \varphi_u\divergence(\v_k)+\grad\varphi_k\cdot\v_u + \varphi_k\divergence(\v_u)\big)\varphi_k\dx\right|\\
&\quad\leq C\left(\normh{1}{\varphi_u}\normH{1}{\v_k}\normh{1}{\varphi_k} + \normH{1}{\v_u}\normh{1}{\varphi_k}^2\right)
	\label{lin_eq_8}\leq \delta_4\normH{1}{\v_k}^2 + C_{\delta_4}(1+\normH{1}{\v_u}^2)\normh{1}{\varphi_k}^2,
\end{align}
with $\delta_4>0$ to be chosen later. Next, we apply the Sobolev embeddings $H^1\subset L^p,~\H^1\subset \L^p$ for $p\in [1,6]$, $H^2\subset L^{\infty}(\Omega)$ and the boundedness of $\varphi_u\in L^{\infty}(H^2)$ to get
\begin{align}
\nonumber &\left|\intO \big((\mu_u+\chi\sigma_u)\grad\varphi_k + (\mu_k+\chi\sigma_k)\grad\varphi_u+\mathbf{F}_2\big)\cdot(\v_k-\w_k)\right|\\
\nonumber &\quad\leq C\left(\normh{2}{\mu_u+\chi\sigma_u}\normh{1}{\varphi_k} + \norml{2}{\mu_k+\chi\sigma_k} + \normL{2}{\F}\right)\normH{1}{\v_k-\w_k}\\
\label{lin_eq_9}&\quad\leq C_{\delta_5}\left(\normh{2}{\mu_u+\chi\sigma_u}^2\normh{1}{\varphi_k}^2 + \norml{2}{\mu_k+\chi\sigma_k}^2 + \normL{2}{\F}^2+\normH{1}{\w_k}^2\right)  + \delta_5\normH{1}{\v_k}^2,
\end{align}
with $\delta_5>0$ to be chosen later. We recall that the $L^2$-orthogonal projection $\mathbb{P}_k$ onto $\mathcal{W}_k$ is continuous on $H^1$. Consequently, choosing $v= (b_i^k + \chi (\sigma_k{,}w_i)_{L^2})w_i$ in (\ref{approx_problem_eq_1b}), summing the resulting identities over $i=1,...,k$, using the boundedness of $\psi''(\varphi_u)\in L^{\infty}(\OmegaT)$ and applying (\ref{lin_eq_3a}), it follows that
\begin{align*}
\norml{2}{\mu_k+\chi\sigma_k}^2&\leq \norml{2}{\laplace\varphi_k}\norml{2}{\mu_k+\chi\sigma_k} + C(\norml{2}{\varphi_k}+\norml{2}{F_2})\norml{2}{\mu_k+\chi\sigma_k}\\
&\leq \normL{2}{\grad\varphi_k}^{1/2}\normL{2}{\grad\laplace\varphi_k}^{1/2}\norml{2}{\mu_k+\chi\sigma_k} + C(\norml{2}{\varphi_k}+\norml{2}{F_2})\norml{2}{\mu_k+\chi\sigma_k}\\
&\leq \left(\delta_6\normL{2}{\grad\laplace\varphi_k} + \tfrac{1}{4\delta_6}\normL{2}{\grad\varphi_k} + C\left(\norml{2}{\varphi_k}+\norml{2}{F_2}\right)\right)\norml{2}{\mu_k+\chi\sigma_k},
\end{align*}
meaning
\begin{equation}
\label{lin_eq_10} \norml{2}{\mu_k+\chi\sigma_k}\leq \left(\delta_6\normL{2}{\grad\laplace\varphi_k} + \tfrac{1}{4\delta_6}\normL{2}{\grad\varphi_k} + C\big(\norml{2}{\varphi_k}+\norml{2}{F_2}\big)\right)
\end{equation}
for $\delta_6>0$ arbitrary.
For the last term on the right-hand side of (\ref{lin_eq_1}), we obtain
\begin{equation}
\label{lin_eq_11}\left|\intO 2\eta\D\v_k\colon\grad\w_k + \nu\v_k\cdot\w_k\dx\right|\leq C\normH{1}{\w_k}^2 + \delta_7\normH{1}{\v_k}^2,
\end{equation}
for $\delta_7>0$ to be chosen. Plugging in (\ref{lin_eq_2})-(\ref{lin_eq_3}), (\ref{lin_eq_4})-(\ref{lin_eq_11}) into (\ref{lin_eq_1}), using Korn's inequality and chosing $\delta_i,~i\in \{1,4,5,6,7\}$ sufficiently small, we obtain that
\begin{align}
\nonumber&\ddt \normh{1}{\varphi_k}^2 + \norml{2}{\laplace\varphi_k}^2 + \normL{2}{\grad\laplace\varphi_k}^2 + \normH{1}{\v_k}^2\\
\label{lin_eq_12}&\quad\leq \beta(t)\normh{1}{\varphi_k(t)}^2+ C\Bigg(\normh{1}{\sigma_k}^2 + \normH{1}{\w_k}^2 + \normL{2}{\F}^2+\sum_{i=2}^{3}\norml{2}{F_i}^2 \Bigg),
\end{align}
where $\beta(t)\coloneqq C\big(1+\normH{2}{\v_u(t)}^2 +\normh{2}{\mu_u(t)+\chi\sigma_u(t)}^2+ \norml{2}{u(t)}^2 \big)$.
\\[1ex]
Due to the definition of $\UR$ and using Theorem \ref{THM:EXS}, it follows that $\beta\in L^1(0,T)$.
From the boundedness of $\h(\varphi_u),\h'(\varphi_u)\in L^{\infty}(\OmegaT),\,\sigma_u\in L^{\infty}(L^6)$ and due to (\ref{lin_eq_0}), we infer that
\begin{equation}
\label{lin_eq_13}\normH{1}{\w_k}\leq C\left(\norml{2}{\sigma_k} + \normh{1}{\varphi_k} + \norml{2}{F_1}\right).
\end{equation}
Multiplying \eqref{approx_problem_eq_1c} with $\sigma_k$, integrating by parts and using \eqref{approx_problem_eq_1cc}, the boundedness of $\h'(\varphi_u)\in L^{\infty}(\OmegaT),\,\sigma_u\in L^{\infty}(L^6)$ and the non-negativity of $\h(\cdot)$ yields
\begin{equation*}
\normL{2}{\grad\sigma_k}^2 + K\norm{\sigma_k}_{L^2(\del\Omega)}^2 = \left|\intO \big(F_4-\h'(\varphi_u)\varphi_k\sigma_u\big)\sigma_k\dx\right| \leq\delta_8\normh{1}{\sigma_k}^2 + C_{\delta_8}\left(\norml{2}{\varphi_k}^2 + \norml{2}{F_4}^2\right)
\end{equation*}
for $\delta_8>0$ arbitrary. Choosing $\delta_8$ sufficiently small and using Poincaré's inequality, this implies that
\begin{equation}
\label{lin_eq_14}\normh{1}{\sigma_k}\leq C(\norml{2}{\varphi_k} + \norml{2}{F_4}).
\end{equation}
Plugging in (\ref{lin_eq_13})-(\ref{lin_eq_14}) into (\ref{lin_eq_12}), we end up with
\begin{align}
\ddt \normh{1}{\varphi_k}^2 + \norml{2}{\laplace\varphi_k}^2 + \normL{2}{\grad\laplace\varphi_k}^2 + \normH{1}{\v_k}^2
\label{lin_eq_15}\leq \beta(t)\normh{1}{\varphi_k}^2 + C\left(\normL{2}{\grad F_3}^2 + \normL{2}{\F}^2 + \sum_{i=1}^{4}\norml{2}{F_i}^2\right).
\end{align}
Recalling \eqref{approx_problem_eq_initial} and using elliptic regularity theory, an application of Gronwall's Lemma to (\ref{lin_eq_15}) gives
\begin{equation}
\label{lin_eq_16} \norm{\varphi_k}_{L^{\infty}(H^1)\cap L^2(H^3)} + \norm{\v_k}_{L^2(\H^1)}\leq C\left(\norm{\grad F_3}_{L^2(\OmegaT)} + \norm{\F}_{L^2(\OmegaT;\R^3)} + \sum_{i=1}^{4}\norm{F_i}_{L^2(\OmegaT)}\right).
\end{equation}

\textit{Step 3:} Using (\ref{lin_eq_10}) and (\ref{lin_eq_14}), from (\ref{lin_eq_16}) we obtain
\begin{equation}
\label{lin_eq_17}\norm{\sigma_k}_{L^2(H^1)} + \norm{\mu_k}_{L^2(L^2)}\leq C\left(\norm{\grad F_3}_{L^2(\OmegaT)} + \norm{\F}_{L^2(\OmegaT;\R^3)} + \sum_{i=1}^{4}\norm{F_i}_{L^2(\OmegaT)}\right).
\end{equation}
Now, choosing $v=\lambda_i b_i^kw_i$ in (\ref{approx_problem_eq_1b}), summing the resulting identities over $i=1,...,k,$ and integrating by parts, we have
\begin{align*}
\normL{2}{\grad\mu_k}^2 &= \left|\intO \grad\big(-\laplace\varphi_k + \psi''(\varphi_u)\varphi_k -\chi\sigma_k + F_3\big)\cdot\grad\mu_k\dx\right|\\
&\leq \frac{1}{2}\left(\normL{2}{\grad\big(-\laplace\varphi_k + \psi''(\varphi_u)\varphi_k -\chi\sigma_k + F_3\big)}^2 + \normL{2}{\grad\mu_k}^2\right),
\end{align*}
which implies
\begin{equation*}
\normL{2}{\grad\mu_k}^2\leq \left(\normL{2}{\grad\big(-\laplace\varphi_k + \psi''(\varphi_u)\varphi_k -\chi\sigma_k + F_3\big)}^2\right).
\end{equation*}
Integrating this inequality in time from $0$ to $T$ and using (\ref{lin_eq_16})-(\ref{lin_eq_17}), we obtain
\begin{equation}
\label{lin_eq_18}\norm{\grad\mu_k}_{L^2(\L^2)}\leq C\left(\norm{\grad F_3}_{L^2(\OmegaT)} + \norm{\F}_{L^2(\OmegaT;\R^3)} + \sum_{i=1}^{4}\norm{F_i}_{L^2(\OmegaT)}\right).
\end{equation}
%Finally, using elliptic regularity theory and the non-negativity of $\h(\cdot)$, from \eqref{approx_problem_eq_1c}-\eqref{approx_problem_eq_1cc} we obtain
%\begin{equation*}
%\normh{2}{\sigma_k}^2\leq C\left(\norml{2}{F_4}^2+\normh{1}{\varphi_k}^2 + \norml{2}{\sigma_k}^2\right),
%\end{equation*}
%where we used the boundedness of $\h'(\varphi_u)\in L^{\infty}(\OmegaT),\,\sigma_u\in L^{\infty}(H^1)$. Integrating this inequality in time from $0$ to $T$ and using (\ref{lin_eq_16})-(\ref{lin_eq_17}), we conclude that
%\begin{equation}
%\label{lin_eq_19}\norm{\sigma_k}_{L^2(H^2)}\leq C\left(\norm{\grad F_3}_{L^2(\OmegaT)} + \norm{\F}_{L^2(\OmegaT;\R^3)} + \sum_{i=1}^{4}\norm{F_i}_{L^2(\OmegaT)}\right).
%\end{equation}

\textit{Step 4:} To get an estimate for the pressure, we test \eqref{approx_problem_eq_1d} with $\q_k\in \H^1$, where $\q_k$ satisfies
\begin{equation}
\divergence(\q_k) = p_k\quad\text{in }\Omega,\quad \q_k =\frac{|\Omega|}{|\Gamma|}(p_k)_\Omega\, \n\quad\text{on }\Gamma
%\q_k = \left(\frac{1}{|\Omega|}\intO p_k\dx\right)\n\quad\text{on }\del\Omega, 
\tand
\label{lin_eq_21}\normH{1}{\q_k}\leq C\norml{2}{p_k}.
\end{equation}
Therefore, using the boundedness of $\mu_u+\chi\sigma_u\in L^{\infty}(L^2),~\grad\varphi_u\in L^{\infty}(\H^1)$, we obtain that
\begin{align*}
\norml{2}{p_k}^2 &\leq C\left(\normH{1}{\v_k}^2 + \normL{3}{\grad\varphi_k}^2 + \normh{1}{\mu_k+\chi\sigma_k}^2 + \normL{2}{\F}^2\right).
\end{align*}
Integrating this inequality in time from $0$ to $T$ and using (\ref{lin_eq_16})-(\ref{lin_eq_18}), we get
\begin{equation}
\label{lin_eq_20}\norm{p_k}_{L^2(L^2)}\leq C\left(\norm{\grad F_3}_{L^2(\OmegaT)} + \norm{\F}_{L^2(\OmegaT;\R^3)} + \sum_{i=1}^{4}\norm{F_i}_{L^2(\OmegaT)}\right).
\end{equation}
Using a comparison argument in (\ref{WF:LIN3}) together with (\ref{lin_eq_16})-(\ref{lin_eq_18}), it follows that
\begin{equation}
\label{lin_eq_22}\norm{\varphi_k}_{H^1((H^1)^*)}\leq C\left(\norm{\grad F_3}_{L^2(\OmegaT)} + \norm{\F}_{L^2(\OmegaT;\R^3)} + \sum_{i=1}^{4}\norm{F_i}_{L^2(\OmegaT)}\right).
\end{equation}
Summarising (\ref{lin_eq_16})-(\ref{lin_eq_22}) gives
\begin{align}
\label{lin_eq_24}
\norm{(\varphi_k,\mu_k,\sigma_k,\v_k,p_k)}_{\V_2}  
 \leq C\left(\norm{\grad F_3}_{L^2(\OmegaT)} + \norm{\F}_{L^2(\OmegaT;\R^3)} + \sum_{i=1}^{4}\norm{F_i}_{L^2(\OmegaT)}\right).
\end{align}
\textit{Step 5:} Due to (\ref{lin_eq_24}), we can pass to the limit in (\ref{approx_problem_eq_1a})-(\ref{approx_problem_eq_1c}) and in the weak formulation of (\ref{approx_problem_eq_1d})-(\ref{approx_problem_eq_1f}) to deduce that (\ref{WF:LIN0})-(\ref{WF:LIN5}) and (\ref{EST:LIN}) hold. The initial condition is attained due to the compact embedding $H^1((H^1)^*)\cap L^{\infty}(H^1)\hookrightarrow C(L^2)$ (see \cite[sect.\,8, Cor.\,4]{Simon}). Moreover, the estimate \eqref{EST:LIN} results from the weak-star lower semicontinuity of the $\V_2$-norm. Recall that, according to Lemma \ref{LEM:EST}, any weak solution of (LIN) satisfies the inequality \eqref{EST:LIN}. Hence, uniqueness of the weak solution immediately follows due to linearity of the system.
\end{proof}

\subsection{Fréchet differentiability}

Now, this result can be used to prove Fréchet differentiability of the control-to-state operator:
\begin{lemma}
	\label{LEM:FRE}
	The control-to-state operator $S$ is Fréchet differentiable on $\UR$, i.e. for any $u\in\UR$ there exists a unique bounded linear operator 
	\begin{align*}
		S'(u): L^2(L^2) \to \V_2, \quad h \mapsto S'(u)[h]=\big(\varphi'_u,\mu'_u,\v'_u,\sigma'_u,p'_u\big)[h],
	\end{align*}
	where $\V_2$ is the space that was introduced in Definition \ref{DEF:CSO}, such that
	\begin{align*}
		\frac{\norm{S(u+h)-S(u)-S'(u)[h]}_{\V_2}}{\norm{h}_{L^2(L^2)}} \to 0 \qquad\text{as}\; \norm{h}_{L^2(L^2)}\to 0.
	\end{align*}
	Moreover, for any $u\in\U$ and $h\in L^2(L^2)$, the Fréchet derivative $\big(\varphi'_u,\mu'_u,\v'_u,\sigma'_u,p'_u\big)[h]$ is the unique weak solution of the system \textnormal{(LIN)} with 
	\begin{align*}
		F_1,F_3,F_4=0,\quad \F=\mathbf{0} \tand F_2= -h\,\h(\varphi_u).
	\end{align*}
\end{lemma}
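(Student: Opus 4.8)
The plan is to exhibit the candidate derivative explicitly and then bound the Taylor remainder of $S$ by means of the linear estimate in Lemma~\ref{LEM:EST}. For fixed $u\in\UR$ and $h\in L^2(L^2)$ I would define $S'(u)[h]$ to be the unique weak solution of (LIN) with $F_1=F_3=F_4=0$, $\F=\mathbf 0$ and $F_2=-h\,\h(\varphi_u)$, which exists by Proposition~\ref{PROP:LIN}. Linearity in $h$ is immediate from the linearity of (LIN), and boundedness $L^2(L^2)\to\V_2$ follows from Lemma~\ref{LEM:EST} together with $\norm{h\,\h(\varphi_u)}_{L^2(L^2)}\le\norm{h}_{L^2(L^2)}$ (using $|\h|\le1$ from (A3)). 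Thus $S'(u)$ is the natural candidate for the Fréchet derivative, and it remains only to control the remainder.

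Since $\UR$ is open, for $\norm{h}_{L^2(L^2)}$ small enough that $u+h\in\UR$ I would set $(\varphi^\rho,\mu^\rho,\sigma^\rho,\v^\rho,p^\rho):=S(u+h)-S(u)-S'(u)[h]$. Subtracting the equations satisfied by $S(u+h)$, $S(u)$ and $S'(u)[h]$ and expanding each nonlinearity about the state $S(u)$, one checks that this remainder solves, as a weak solution by construction, a system of exactly the form (LIN) linearised at $u$, whose right-hand sides are the second-order remainders of the expansions. The main source terms are
\begin{align*}
F_3^\rho &= \psi'(\varphi_{u+h})-\psi'(\varphi_u)-\psi''(\varphi_u)(\varphi_{u+h}-\varphi_u),\\
\F^\rho &= \big(\mu_{u+h}-\mu_u+\chi(\sigma_{u+h}-\sigma_u)\big)\grad(\varphi_{u+h}-\varphi_u),
\end{align*}
together with the convection remainder $-\divergence\big((\varphi_{u+h}-\varphi_u)(\v_{u+h}-\v_u)\big)$, the quadratic remainders $\h(\varphi_{u+h})-\h(\varphi_u)-\h'(\varphi_u)(\varphi_{u+h}-\varphi_u)$ weighted by $\sigma_u$, by the constant $A$ and by the control $u$, and a term $-h\big(\h(\varphi_{u+h})-\h(\varphi_u)\big)$; analogous bilinear remainders form $F_1^\rho$ and $F_4^\rho$. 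Applying Lemma~\ref{LEM:EST} reduces everything to showing that $\norm{\grad F_3^\rho}_{L^2(L^2)}$, $\norm{\F^\rho}_{L^2(\L^2)}$ and all $\norm{F_i^\rho}_{L^2(L^2)}$ are $o(\norm{h}_{L^2(L^2)})$ as $\norm{h}_{L^2(L^2)}\to0$.

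Most of these are routine. Each such term is a product of two state differences, so combining the Lipschitz bound $\norm{S(u+h)-S(u)}_{\V_2}\le L_1\norm{h}_{L^2(L^2)}$ from Lemma~\ref{LEM:LIP}(a) with $\norm{\varphi_{u+h}-\varphi_u}_{L^\infty(\OmegaT)}\le L_2\norm{h}_{L^2(L^2)}^{1/2}$ from Lemma~\ref{LEM:LIP}(b), Hölder's inequality and Sobolev/interpolation embeddings produces a bound of order $\norm{h}_{L^2(L^2)}^{3/2}$; the convection remainder is handled the same way after writing it as $\grad(\varphi_{u+h}-\varphi_u)\cdot(\v_{u+h}-\v_u)+(\varphi_{u+h}-\varphi_u)\,\divergence(\v_{u+h}-\v_u)$.

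I expect two genuine obstacles. The first is $\grad F_3^\rho$: since (A4) only provides $\psi\in C^3$, no pointwise second-order Taylor bound on $\psi''$ is available, so I would write $\grad F_3^\rho=\big[\psi''(\varphi_{u+h})-\psi''(\varphi_u)-\psi'''(\varphi_u)(\varphi_{u+h}-\varphi_u)\big]\grad\varphi_u+\big[\psi''(\varphi_{u+h})-\psi''(\varphi_u)\big]\grad(\varphi_{u+h}-\varphi_u)$, estimate the second bracket with \eqref{assumption_on_psi_8} and the uniform $L^\infty(\OmegaT)$-bounds of Corollary~\ref{COR:INF}, and treat the first bracket by factoring out the modulus of continuity $\omega$ of $\psi'''$ on the (uniformly bounded) range of the phase fields, pairing $\varphi_{u+h}-\varphi_u\in L^2(L^6)$ of order $\norm{h}_{L^2(L^2)}$ with $\grad\varphi_u\in L^\infty(L^3)$; since $\omega\big(\norm{\varphi_{u+h}-\varphi_u}_{L^\infty(\OmegaT)}\big)\to0$, this bracket is $o(\norm{h}_{L^2(L^2)})$. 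The second and harder obstacle is the remainder $u\,\big[\h(\varphi_{u+h})-\h(\varphi_u)-\h'(\varphi_u)(\varphi_{u+h}-\varphi_u)\big]$ inside $F_2^\rho$: because the fixed control $u$ lies only in $L^2(L^2)$, it must be paired with this $\h$-remainder in $L^\infty(\Omega)$, and the crude bound $\norm{\varphi_{u+h}-\varphi_u}_{L^\infty(\OmegaT)}^2\le C\norm{h}_{L^2(L^2)}$ yields only $O(\norm{h}_{L^2(L^2)})$, which is insufficient. I would resolve this by a uniform-integrability argument, splitting $\OmegaT$ into $\{|u|>M\}$ and its complement: on the complement $|u|\le M$ gives a clean $\norm{h}_{L^2(L^2)}^{3}$ bound for the square, while on $\{|u|>M\}$ the factor $\int_{\{|u|>M\}}u^2$ is small uniformly in $h$ for $M$ large; letting $M\to\infty$ first and then $\norm{h}_{L^2(L^2)}\to0$ shows this term is $o(\norm{h}_{L^2(L^2)})$. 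Collecting all estimates gives $\norm{S(u+h)-S(u)-S'(u)[h]}_{\V_2}=o(\norm{h}_{L^2(L^2)})$, which is the asserted Fréchet differentiability; the stated representation of the derivative holds in particular for every $u\in\U\subset\UR$.
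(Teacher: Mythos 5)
Your proposal is correct and its architecture coincides with the paper's: define $S'(u)[h]$ as the unique weak solution of (LIN) with $F_2=-h\,\h(\varphi_u)$ and all other sources zero, identify the Taylor remainder of $S$ as the solution of (LIN) driven by the second-order remainders of the nonlinearities, and close via the a priori estimate of Lemma~\ref{LEM:EST} combined with the Lipschitz estimates of Lemma~\ref{LEM:LIP}. Where you diverge is precisely at the two terms you flag as obstacles, and at both points your treatment is more careful than the paper's. For the gradient of the $\psi$-remainder, the paper writes $\CR_6=\tfrac12\psi'''(\xi)(\varphi_{u+h}-\varphi_u)^2$ and differentiates it, implicitly invoking a bound on $\psi''''$ even though (A4) only provides $\psi\in C^3$; your decomposition of $\grad F_3^\rho$ into a $\psi'''$-modulus-of-continuity piece paired with $\grad\varphi_u\in L^\infty(\L^3)$ and a Lipschitz piece controlled by \eqref{assumption_on_psi_8} avoids the extra derivative, at the cost of losing the explicit rate $\norm{h}_{L^2(L^2)}^{3/2}$ on that one contribution --- which is harmless, since only $o(\norm{h}_{L^2(L^2)})$ is needed. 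For the $u$-weighted quadratic remainder, the paper asserts $\norm{\CR_3}_{L^2(L^2)}\le C\norm{h}_{L^2(L^2)}^{3/2}$, but since the fixed control $u$ lies only in $L^2(L^2)$, the term $\tfrac12 u\,\h''(\zeta)(\varphi_{u+h}-\varphi_u)^2$ can only be paired with $\norm{\varphi_{u+h}-\varphi_u}_{L^\infty(\OmegaT)}^2\le L_2^2\norm{h}_{L^2(L^2)}$, which yields merely $O(\norm{h}_{L^2(L^2)})$ from the cited lemmas; your truncation of $u$ at level $M$ together with uniform integrability of $u^2$ recovers the required $o(\norm{h}_{L^2(L^2)})$ (again without a rate) and is, as far as I can see, the correct repair under the stated hypothesis $a,b\in L^2(L^2)$. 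In short: same route, but your version honestly closes two steps that the paper's proof glosses over.
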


\begin{proof}
	Let $C$ denote a generic nonnegative constant that depends only on $R$, $\Omega$, $\Gamma$ and $T$ and may change its value from line to line. To prove Fréchet differentiability we must consider the difference 
	\begin{align*}
		(\varphi,\mu,\sigma,\v,p):=(\varphi_{u+h},\mu_{u+h},\v_{u+h},\sigma_{u+h},p_{u+h})-(\varphi_u,\mu_u,\sigma_u,\v_u,p_u)
	\end{align*}
	for some arbitrary $u\in\UR$ and $h\in L^2(L^2)$ with $u+h\in\UR$. Therefore, we assume that $\norm{h}_{L^2(L^2)}<\delta$ for some sufficiently small $\delta>0$. Now, we Taylor expand the nonlinear terms in (CHB) to pick out the linear contributions. We obtain that
	\begin{align*}
		\h(\varphi_{u+h}) - \h(\varphi_{u}) &= \h'(\varphi_u)\varphi + \CR_1, \\
		\sigma_{u+h}\h(\varphi_{u+h}) - \sigma_{u}\h(\varphi_{u}) &= \sigma\h(\varphi_{u}) + \sigma_{u}\h'(\varphi_u)\varphi + \CR_2, \\
		(u+h)\h(\varphi_{u+h}) - u\h(\varphi_{u}) &= u\h'(\varphi_u)\varphi + h\h(\varphi_u) + \CR_3,\\
		(\mu_{u+h}+\chi\sigma_{u+h})\grad\varphi_{u+h} - (\mu_{u}+\chi\sigma_{u})\grad\varphi_{u} &= (\mu_{u}+\chi\sigma_{u})\grad\varphi + (\mu+\chi\sigma)\grad\varphi_{u} + \CR_4, \\
		\divergence(\varphi_{u+h}\v_{u+h}) - \divergence(\varphi_{u}\v_{u}) &= \divergence(\varphi\v_{u}) + \divergence(\varphi_{u}\v) +\CR_5,\\
		\psi'(\varphi_{u+h}) - \psi'(\varphi_{u}) &= \psi''(\varphi_{u})\varphi + \CR_6,
	\end{align*}
	where the nonlinear remainders are given by
	\begin{align*}
		\CR_1&:= \tfrac 1 2 \h''(\zeta)(\varphi_{u+h} - \varphi_{u})^2,	\\
		\CR_2&:= (\sigma_{u+h}-\sigma_u)(\h(\varphi_{u+h}) - \h(\varphi_{u})) + \tfrac 1 2 \sigma_u\h''(\zeta)(\varphi_{u+h} - \varphi_{u})^2,\\
		\CR_3&:= \tfrac 1 2 u\,\h''(\zeta)(\varphi_{u+h} - \varphi_{u})^2 + h\big(\h(\varphi_{u+h})-\h(\varphi_{u})\big),\\
		\CR_4&:= \big[(\mu_{u+h} - \mu_{u}) +\chi(\sigma_{u+h}-\sigma_{u})\big]
		(\grad\varphi_{u+h} - \grad\varphi_{u}),\\	
		\CR_5&:= \divergence\big[(\varphi_{u+h} - \varphi_{u})(\v_{u+h} - \v_{u})\big],\\
		\CR_6&:= \tfrac 1 2 \psi'''(\xi) (\varphi_{u+h} - \varphi_{u})^2	
	\end{align*}
	with $\zeta=\theta_1\varphi_{u+h}+(1-\theta_1)\varphi_{u}$ and $\xi=\theta_2\varphi_{u+h}+(1-\theta_2)\varphi_{u}$ for some $\theta_1,\theta_2\in[0,1]$. This means that the difference $(\varphi,\mu,\sigma,\v,p)$ is the weak solution of (LIN) with
	\begin{gather*}
		F_1 = P\CR_2 - A\CR_1,\quad \F = \CR_4,\quad F_2 =  P\CR_2 - A\CR_1 - \CR_3-\CR_5 -h\,\h(\varphi_u),\quad F_3 = \CR_6,\quad F_4=-\CR_2.
	\end{gather*}
	By a simple computation, one can show that these functions have the desired regularity. Now, we write $(\varphi_u^h,\mu_u^h,\v_u^h,\sigma_u^h,p_u^h)$ to denote the weak solution of (LIN) with 
	\begin{align*}
		F_1,F_3,F_4=0,\quad \F=\mathbf{0} \tand F_2= -h\,\h(\varphi_u).
	\end{align*}
	and $(\varphi_{\CR}^h,\mu_{\CR}^h,\v_{\CR}^h,\sigma_{\CR}^h,p_{\CR}^h)$ to denote the weak solution of (LIN) with 
	\begin{align}
	\label{EQ:F}
		F_1 = P\CR_2 - A\CR_1,\;\; \F = \CR_4,\;\;\, F_2 =  P\CR_2 - A\CR_1 - \CR_3-\CR_5,\;\;\, F_3 = \CR_6,\;\;\, F_4=-\CR_2.
	\end{align}
	Because of linearity of the system (LIN) and uniqueness of its solution, it follows that
	\begin{align*}
		(\varphi_{u+h},\mu_{u+h},\v_{u+h},\sigma_{u+h},p_{u+h}) - (\varphi_u,\mu_u,\sigma_u,\v_u,p_u) - (\varphi_u^h,\mu_u^h,\v_u^h,\sigma_u^h,p_u^h)
			=(\varphi_{\CR}^h,\mu_{\CR}^h,\v_{\CR}^h,\sigma_{\CR}^h,p_{\CR}^h).
	\end{align*}
	We conclude from Theorem \ref{THM:EXS} that $\zeta$ and $\xi$ are uniformly bounded. This yields
	\begin{align*}
		\norm{\psi'''(\zeta)}_{L^\infty(\OmegaT)}\le C,\quad \norm{\psi''''(\zeta)}_{L^\infty(\OmegaT)}\le C \tand \norm{\h''(\zeta)}_{L^\infty(\OmegaT)}\le C.
	\end{align*} 
	Moreover, since $\h(\cdot)$ is Lipschitz continuous, inequality \eqref{IEQ:L2} implies that
	\begin{align*}
		\norm{\h(\varphi_{u+h})-\h(\varphi_{u})}_{L^\infty(\OmegaT)} \le C\,\norm{\varphi_{u+h}-\varphi_{u}}_{L^\infty(\OmegaT)} \le C\,\norm{h}_{L^2(L^2)}^{1/2}.
	\end{align*} 
	Together with the Lipschitz estimates from Lemma \ref{LEM:LIP} we obtain that
	\begin{align*}
		\norm{\CR_i}_{L^2(L^2)} \le C\, \norm{h}_{L^2(L^2)}^{3/2},\quad i\in\{1,2,3,6\}.
	\end{align*} 
	Since $\norm{\grad\varphi_{u+h}-\grad\varphi_{u}}_{L^\infty(L^6)} \le \norm{\varphi_{u+h}-\varphi_{u}}_{L^\infty(H^2)} \le 2C_1$ we have
	\begin{align}
	\label{EST:R51}
		&\norm{(\grad\varphi_{u+h}-\grad\varphi_{u})(\v_{u+h}-\v_{u})}_{L^2(L^2)}^2 \notag\\
		&\quad \le \norm{\grad\varphi_{u+h}-\grad\varphi_{u}}_{L^\infty(L^6)}\, \norm{\grad\varphi_{u+h}-\grad\varphi_{u}}_{L^\infty(L^2)}\, \norm{\v_{u+h}-\v_{u}}_{L^2(L^6)}^2 \notag\\
		&\quad \le C\, \norm{\varphi_{u+h}-\varphi_{u}}_{L^\infty(H^1)}\, \norm{\v_{u+h}-\v_{u}}_{L^2(H^1)}^2
	\end{align} 
	and thus
	\begin{align*}
	%\label{EST:R52}
		\norm{\CR_5}_{L^2(L^2)} 
		&\le C\,\norm{\varphi_{u+h}-\varphi_{u}}_{L^\infty(\OmegaT)}\, \norm{\v_{u+h}-\v_{u}}_{L^2(H^1)} 
			+ C\,\norm{(\grad\varphi_{u+h}-\grad\varphi_{u})(\v_{u+h}-\v_{u})}_{L^2(L^2)} \notag\\
		&\le C\,\norm{h}_{L^2(L^2)}^{3/2}.
	\end{align*} 
	Since $u\mapsto \mu_u$ and $u\mapsto \sigma_u$ are also Lipschitz continuous with respect to the norm on $L^2(H^1)$, we can proceed similarly to \eqref{EST:R51} to bound the term $\CR_4$. This yields
	\begin{align*}
	\norm{\CR_4}_{L^2(L^2)} \le C\,\norm{h}_{L^2(L^2)}^{3/2}.
	\end{align*} 
	Furthermore, $\grad\CR_6$ can be bounded by
	\begin{align*}
		\norm{\grad\CR_6}_{L^2(L^2)} 
		&\le  C\, \norm{\grad\xi}_{L^\infty(L^6)} \, \norm{\varphi_{u+h}-\varphi_u}_{L^\infty(L^6)}^2 
			+ C\, \norm{\grad\varphi_{u+h}-\grad\varphi_u}_{L^2(L^3)} \, \norm{\varphi_{u+h}-\varphi_u}_{L^\infty(L^6)} \\
		&\le  C\, \norm{\grad\xi}_{L^\infty(\OmegaT)} \, \norm{\varphi_{u+h}-\varphi_u}_{L^\infty(H^1)}^2 
			+ C\, \norm{\varphi_{u+h}-\varphi_u}_{L^2(H^2)} \, \norm{\varphi_{u+h}-\varphi_u}_{L^\infty(H^1)} \\
		&\le C\, \norm{h}_{L^2(L^2)}^{2}.
	\end{align*} 
	This finally implies that
	\begin{gather*}
		\norm{F_i}_{L^2(L^2)} \le C\, \norm{h}_{L^2(L^2)}^{3/2}\;\;\text{for}\; i=1,2,3,4, \quad
		\norm{\F}_{L^2(L^2)} \le C\, \norm{h}_{L^2(\L^2)}^{3/2}, \quad
		\norm{\grad F_3}_{L^2(L^2)} \le C\, \norm{h}_{L^2(L^2)}^{3/2}
	\end{gather*}
	where $F_i$ denote the functions given by \eqref{EQ:F}. Hence, due to estimate \eqref{EST:LIN} we obtain that
	\begin{align*}
		\norm{(\varphi_{\CR}^h,\mu_{\CR}^h,\v_{\CR}^h,\sigma_{\CR}^h,p_{\CR}^h)}_{\V_2} \le C\, \norm{h}_{L^2(L^2)}^{3/2},
	\end{align*}
	which completes the proof.
\end{proof}

\section{The optimal control problem}

In this section we analyze the optimal control problem that was presented in the introduction: We intend to minimize the \textit{\bfseries cost functional}
\begin{align*}
I(\varphi,\mu,\sigma,\v,p,u)&:= 
\frac{\upalpha_0} 2 \norml{2}{\varphi(T)-\varphi_f}^2
+ \frac{\upalpha_1} 2 \norm{\varphi-\varphi_d}_{L^2(\Omega_T)}^2
+ \frac{\upalpha_2} 2\norm{\mu-\mu_d}_{L^2(\Omega_T)}^2 \\
&\qquad + \frac{\upalpha_3} 2\norm{\sigma-\sigma_d}_{L^2(\Omega_T)}^2
+ \frac{\upalpha_4} 2\norm{\v-\v_d}_{L^2(\Omega_T)}^2
+ \frac{\kappa} 2\norm{u}_{L^2(\Omega_T)}^2
\end{align*}
subject to the following conditions
\begin{itemize}
	\item $u$ is an admissible control, i.e., $u\in\U$,
	\item $(\varphi,\mu,\sigma,\v,p)$ is a strong solution of the system (CHB) to the control $u$.
\end{itemize}
Using the control-to-state operator we can formulate this optimal control problem alternatively as
\begin{align}
	\label{OCP}
		\text{Minimize} &\quad J(u) \quad \text{s.t.}\quad u\in\U,
\end{align}
where the \textit{\bfseries reduced cost functional} $J$ is defined by
\begin{align}
\label{DEF:J}
	J(u):=I(S(u),u)=I(\varphi_u,\mu_u,\sigma_u,\v_u,p_u,u), \quad u\in\U.
\end{align}
A globally/locally optimal control of this optimal control problem is defined as follows:

\bigskip

\begin{definition}
	Let $u\in\U$ be any admissible control.
	\begin{itemize}
		\itema $\u$ is called a \textbf{(globally) optimal control} of the problem \eqref{OCP} if $J(\u)\le J(u)$ for all $u\in\U$.
		\itemb $\u$ is called a \textbf{locally optimal control} of the problem \eqref{OCP} if there exists some $\delta>0$ such that  $J(\u)\le J(u)$ for all $u\in\U$ with $\norm{u-\u}_{L^2(\Omega_T)}<\delta$.
	\end{itemize}
	In this case, $(\varphi_\u,\mu_\u,\sigma_\u,\v_\u,p_\u)$ is called the corresponding \textbf{globally/locally optimal state}.
\end{definition}

\smallskip

\subsection{Existence of a globally optimal control}

Of course, the optimal control problem \eqref{OCP} does only make sense if there is at least one globaly optimal solution. This is established by the following Theorem. The proof is rather short as most of the work has already been done in the previous chapter.

\begin{theorem}
	The optimization problem \eqref{OCP} possesses a globally optimal solution.
\end{theorem}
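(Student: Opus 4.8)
The plan is to use the direct method of the calculus of variations. First I would observe that the reduced cost functional $J$ is bounded from below by zero, since it is a sum of squared norms multiplied by nonnegative constants. Hence the infimum $\inf_{u\in\U} J(u) =: m \ge 0$ is finite, and we may select a minimizing sequence $(u_k)_{k\in\N}\subset\U$ with $J(u_k)\to m$ as $k\to\infty$.

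Next I would exploit the structure of the admissible set. Since $\U$ is a bounded subset of the Hilbert space $L^2(L^2)$, the Banach-Alaoglu theorem guarantees that the sequence $(u_k)$ admits a weakly convergent subsequence; moreover, because $\U$ is convex and closed, it is weakly sequentially compact, so the weak limit $\u$ again lies in $\U$. Passing to this subsequence (not relabeled), we have $u_k\wto\u$ in $L^2(L^2)$ with $\u\in\U$.

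The crucial step is then to pass to the limit in the cost functional. Here I would invoke the weak compactness property of the control-to-state operator established in Lemma \ref{LEM:COMP}: after extracting a further subsequence, the associated states $(\varphi_{u_k},\mu_{u_k},\sigma_{u_k},\v_{u_k},p_{u_k})$ converge to $(\varphi_\u,\mu_\u,\sigma_\u,\v_\u,p_\u)=S(\u)$, weakly in the relevant spaces and, importantly, with $\varphi_{u_k}(T)\to\varphi_\u(T)$ strongly in $L^2$ thanks to the convergence $\varphi_{u_k}\to\varphi_\u$ in $C(\overline{\OmegaT})$ (and in particular in $C(L^2)$). The $L^2(\OmegaT)$-norms of $\varphi-\varphi_d$, $\mu-\mu_d$, $\sigma-\sigma_d$, $\v-\v_d$ are all weakly (sequentially) lower semicontinuous, as is the control term $\norm{u}_{L^2(\OmegaT)}^2$ with respect to weak convergence of $u_k$. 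Combining weak lower semicontinuity of each summand with the strong convergence of the terminal term, I would conclude
\begin{align*}
J(\u) = I(S(\u),\u) \le \liminf_{k\to\infty} I(S(u_k),u_k) = \liminf_{k\to\infty} J(u_k) = m.
\end{align*}
Since $\u\in\U$ forces $J(\u)\ge m$ as well, we obtain $J(\u)=m$, so $\u$ is a globally optimal control.

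The main obstacle I anticipate is ensuring that the terminal-time tracking term $\tfrac{\upalpha_0}{2}\norml{2}{\varphi(T)-\varphi_f}^2$ passes correctly to the limit, since pointwise-in-time evaluation at $t=T$ is not continuous under mere weak convergence in $H^1(L^2)\cap L^2(H^4)$. This is precisely why the strong convergence $\varphi_{u_k}\to\varphi_\u$ in $C(\overline{\OmegaT})$ furnished by Lemma \ref{LEM:COMP} is essential; it upgrades the control of the endpoint so that $\varphi_{u_k}(T)\to\varphi_\u(T)$ in $L^2$, making that summand in fact continuous rather than merely lower semicontinuous. All the remaining $L^2(\OmegaT)$ terms require only weak lower semicontinuity of the norm, which is standard, so once the endpoint issue is handled the argument closes.
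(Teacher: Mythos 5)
Your proposal is correct and follows essentially the same route as the paper: direct method, weak sequential compactness of $\U$, Lemma \ref{LEM:COMP} to identify the limit state as $S(\u)$, and weak lower semicontinuity of the cost functional. The only cosmetic difference is that the paper handles the terminal term via the weak convergence $\varphi_{u_k}(T)\wto\varphi_\u(T)$ in $L^2$ (which already suffices for lower semicontinuity of the squared norm), whereas you upgrade it to strong convergence through $C(\overline{\OmegaT})$; both are valid.
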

\begin{proof}
	This result can be proved by the direct method of calculus of variations: Obviously, the functional $J$ is bounded from below by zero. Therefore the infimum $m:=\inf_{u\in\U}J(u)$ exists and we can find a minimizing sequence $(u_k)\subset\U$ with $J(u_k)\to m$ as $k\to\infty$. As the set $\U$ is weakly sequentially compact, there exists $\u\in\U$ such that $u_k\wto\u$ in $L^2(L^2)$ after extraction of a subsequence. Now, according to Lemma \ref{LEM:COMP} we obtain that 
	\begin{alignat*}{6}
		&\varphi_{u_k} &&\wto \varphi_\u\quad &&\text{in}\; H^1(L^2)\cap L^2(H^4),\quad
		&&\varphi_{u_k} &&\to \varphi_\u &&\text{in}\; C(W^{1,r})\cap C(\overline{\OmegaT}),~r\in [1,6),\\
		&\mu_{u_k}&&\wto\mu_\u\quad &&\text{in}\; L^2(H^2),
		&&\v_{u_k}&&\wto\v_\u\quad &&\text{in}\; L^2(H^2),\\
		&\sigma_{u_k}&&\wto\sigma_\u\quad &&\text{in}\; L^2(H^2), \quad  
		&&p_{u_k}&&\wto p_\u\quad &&\text{in}\; L^2(H^1).
	\end{alignat*}
	after another subsequence extraction (in particular, it follows that $\varphi_{u_k}(T)\wto \varphi_\u(T)$ in $L^2$). Furthermore, Lemma~\ref{LEM:COMP} yields that
	\begin{equation*}
	S(\u) = (\varphi_\u,\mu_\u,\sigma_\u,\v_\u,p_\u),
	\end{equation*}
	hence $(\u,S(\u))$ is an admissible control-state pair.
	From the weak lower semicontinuity of the cost functional $J$ we can conclude that
	\begin{align*}
	J(\u) \le \underset{k\to\infty}{\lim\inf} J(u_k) = \underset{k\to\infty}{\lim} J(u_k) = m
	\end{align*}
	and $J(\u)=m$ immediately follows by the definition of the infimum. This means that $\u$ is a globally optimal control with corresponding state $S(\u) = (\varphi_\u,\mu_\u,\sigma_\u,\v_u,p_\u)$.
\end{proof}
%\begin{proof}
%	This result can be proved by the direct method of calculus of variations: Obviously, the functional $J$ is bounded from below by zero. Therefore the infimum $m:=\inf_{u\in\U}J(u)$ exists and we can find a minimizing sequence $(u_k)\subset\U$ with $J(u_k)\to m$ as $k\to\infty$. As the set $\U$ is weakly sequentially compact, there exists $\u\in\U$ such that $u_k\wto\u$ after extraction of a subsequence. Now, according to Lemma \ref{LEM:COMP} we obtain that $\varphi_{u_k}\wto \varphi_u$ in $H^1(L^2)$ and $\mu_{u_k}\wto \mu_u$, $\v_{u_k}\wto \v_u$, $\sigma_{u_k}\wto \sigma_u$ in $L^2(\Omega_T)$ after another subsequence extraction. By the fundamental theorem of calculus this also implies that $\varphi_{u_k}(T)\wto \varphi_u(T)$ in $L^2$. From the weak lower semicontinuity of the cost functional $J$ we can conclude that
%	\begin{align*}
%		J(\u) \le \underset{k\to\infty}{\lim\inf} J(u_k) = \underset{k\to\infty}{\lim} J(u_k) = m
%	\end{align*}
%	and $J(\u)=m$ immediately follows by the definition of infimum. This means that $\u$ is a globally optimal solution.
%\end{proof}

Of course this theorem does not provide uniqueness of the globally optimal solution $\u$. As the control-to-state operator is nonlinear we cannot expect the cost functional to be convex. Therefore, it is possible that the optimization problem has several locally optimal solutions or even several globally optimal solutions. In the following, since numerical methods will (in general) only detect local minimizers, our goal is to characterize locally optimal solutions by a necessary optimality condition.
\\[1ex]
As the control-to-state operator is Fréchet differentiable according to Lemma \ref{LEM:FRE}, Fréchet differentiability of the cost functional easily follows by chain rule. If $\u\in\U$ is a locally optimal solution, it must hold that $J'(\u)[u-\u]\ge 0$ for all $u\in \U$. The problem is, that (up to now) the operator $J'(\u)$ cannot be described explicitly. However, we will show that $J'(\u)$ can be described by means of the so-called adjoint state which will be introduced in the next subsection.

\subsection{The adjoint system}

The following system of partial differential equations is referred to as the \textit{\bfseries adjoint system}:
\begin{subequations}
	\begin{empheq}[left=\textnormal{(ADJ)}\hspace{-3pt}\empheqlbrace]{align}
	\label{EQ:ADJ1}
	\divergence(\w) &= 0 &&\text{in}\;\OmegaT, \\
	\label{EQ:ADJ2}
	-\eta\laplace\w + \nu\w &=  -\grad q +\varphi_u\grad\vartheta + \upalpha_4(\v_u-\v_d)  &&\text{in}\;\OmegaT,\\
	\label{EQ:ADJ3}
	\delt\vartheta+\v_u\cdot\grad\vartheta &= -(P\sigma_u-A-u)\h'(\varphi_u)\vartheta - \h'(\varphi_u)\sigma_u \rho - \psi''(\varphi_u)\tau + \laplace\tau\notag\\
	& + \grad(\mu_u+\chi\sigma_u)\cdot\w+(P\sigma_u-A)\h'(\varphi_u)q - \upalpha_1(\varphi_u\sminus \varphi_d) &&\text{in}\;\OmegaT,\\
	\label{EQ:ADJ4}
	\tau &= \grad\varphi_u\cdot\w + m\laplace\vartheta + \upalpha_2(\mu_u-\mu_d) &&\text{in}\;\OmegaT,\\
	\label{EQ:ADJ5}
	\laplace\rho - \h(\varphi_u)\rho &= - \chi\tau + P\h(\varphi_u)\vartheta +\chi\grad\varphi_u\scdot\w - P\h(\varphi_u)q + \upalpha_3(\sigma_u \sminus \sigma_d)
	&&\text{in}\;\OmegaT,\\[2mm]
	\label{EQ:ADJ6}
	\deln\rho &= -K\rho &&\text{on}\;\GammaT,\\
	\label{EQ:ADJ7}
	\deln\vartheta &=0 &&\text{on}\;\GammaT,\\
	\label{EQ:ADJ8}
	0 &= (2\eta \D\w - q\I + \varphi_u\vartheta\I)\n &&\text{on}\;\GammaT,\\
	\label{EQ:ADJ9}
	\deln\tau &= \vartheta\v_u\cdot\n - (\mu_u+\chi\sigma_u)\w\cdot\n &&\text{on}\;\GammaT,\\[2mm]
	\label{EQ:ADJ10}
	\vartheta(T)&=\upalpha_0(\varphi_u(T)-\varphi_f) &&\text{in}\; \Omega.
	\end{empheq}
\end{subequations}

\bigskip
A weak solution of this system of equations is defined as follows:
\begin{definition}
	Let $u\in\UR$ be any control and let $(\varphi_u,\mu_u,\sigma_u,\v_u,p_u)$ denote its corresponding state. Then the quintuple $(\vartheta,\tau,\rho,\w,q)$ is called a weak solution of the adjoint system if it lies in $\V_2$ and satisfies the equations
	\begin{alignat}{2}
	\label{WF:ADJ0} 
	\vartheta(T) &= \upalpha_0(\varphi_u(T)-\varphi_f) &&\quad\text{a.e. in}\; \Omega,\\
	\label{WF:ADJ1} 
	\divergence(\w) &= 0 &&\quad\text{a.e. in}\; \OmegaT 
	\end{alignat}
	and
	\begin{align}
	\label{WF:ADJ2} 
	&\intO 2\eta\D\w\scolon\grad\tw + \nu\w\scdot\tw - q\divergence(\tw) \dx = \sminus\intO \vartheta\grad\varphi_u\scdot\tw + \vartheta\varphi_u \divergence(\tw) \dx 
	+ \intO \upalpha_4(\v_u \sminus \v_d)\scdot\tw \dx, \\
	\label{WF:ADJ3} 
	&\big\langle \delt\vartheta, \tphi \big\rangle_{H^1} = -\intO  \big[ (P\sigma_u-A-u)\h'(\varphi_u)\vartheta  + \h'(\varphi_u)\sigma_u\rho + \psi''(\varphi_u)\tau \notag\\[-2mm]
	& \hspace{120pt} - (P\sigma_u-A)\h'(\varphi_u) q - \vartheta\divergence(\v_u) + \upalpha_1 (\varphi_u-\varphi_d)\big] \tphi \dx \notag\\
	& \hspace{90pt}+ \intO \big[\vartheta\v_u -(\mu_u+\chi\sigma_u)\w -\grad\tau \big]\cdot\grad\tphi\dx, \\
	\label{WF:ADJ4} 
	&\intO \tau\ttau \dx = \intO \big[ \grad\varphi_u\cdot\w + \upalpha_2(\mu_u-\mu_d) \big]\ttau - m\grad\vartheta\cdot\grad\ttau \dx, \\
	\nonumber
	&-\intO \grad\rho\cdot\grad\trho \dx - K \intG \rho\trho \dS = \intO \big[ -\chi\tau+P\h(\varphi_u)\vartheta + \chi\grad\varphi_u\cdot\w \\[-2mm]
	\label{WF:ADJ5} & \hspace{180pt} - P\h(\varphi_u)q + \upalpha_3(\sigma_u-\sigma_d) + \h(\varphi_u)\rho \big]\trho\dx,
	\end{align}
	for a.e. $t\in (0,T)$ and all $\tphi,\ttau,\trho \in H^1,~\tw\in \H^1$.
\end{definition}

Next, we will show that the adjoint system is uniquely solvable.

\begin{theorem}
	\label{LEM:ADJ}
	Let $u\in\U$ be arbitrary. Then, the adjoint system \textnormal{(ADJ)} has a unique weak solution $(\vartheta,\tau,\rho,\w,q)$. In addition, it holds that
	\begin{equation*}
	\rho\in L^2(H^2),\quad \w \in  L^2(\H^2),\quad q \in L^2(H^1)
	\end{equation*}
	and
	\begin{align}
		\nonumber &\norm{(\vartheta,\tau,\rho,\w,q)}_{\V_2} +  \norm{\rho}_{L^2(H^2)} + \norm{\w}_{L^2(\H^2)} +  \norm{q}_{L^2(H^1)}\\
		\label{LEM:ADJ:EST}&\quad\leq C\norm{(\varphi_u-\varphi_d,\varphi_u(T)-\varphi_f,\mu_u-\mu_d,\sigma_u-\sigma_d,\v_u-\v_d)}_{\V_3}.
		\end{align}
	This unique solution is called the \textbf{adjoint state} or \textbf{costate}.
\end{theorem}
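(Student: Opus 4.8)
The plan is to mirror the Galerkin construction from the proof of Proposition~\ref{PROP:LIN}, exploiting that \textnormal{(ADJ)} is a \emph{linear} system in the unknowns $(\vartheta,\tau,\rho,\w,q)$ whose coefficients are assembled from the fixed state $(\varphi_u,\mu_u,\sigma_u,\v_u,p_u)$. The first preparatory step is to reverse time via $s=T-t$: this turns the terminal condition \eqref{EQ:ADJ10} into an initial condition $\vartheta(0)=\upalpha_0(\varphi_u(T)-\varphi_f)$ and converts \eqref{EQ:ADJ3} into a forward-in-time equation. Substituting $\tau$ from \eqref{EQ:ADJ4} into \eqref{EQ:ADJ3} reveals that $\vartheta$ obeys a fourth-order Cahn--Hilliard-type equation, so the whole structure is completely analogous to \textnormal{(LIN)}, with $\vartheta$ playing the role of $\varphi$, $\tau$ that of $\mu$, $\rho$ that of $\sigma$, and $(\w,q)$ that of $(\v,p)$. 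This analogy is what lets me reuse the machinery wholesale.

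I would set up the approximation exactly as in Step~1 of Proposition~\ref{PROP:LIN}: use the Neumann--Laplace eigenfunctions $\{w_i\}_{i\in\N}$ as Galerkin basis for $\vartheta$ and $\tau$, while at each time slice solving the elliptic problem \eqref{EQ:ADJ5}--\eqref{EQ:ADJ6} for $\rho_k$ (elliptic regularity giving $\rho_k\in H^2$) and the Brinkman problem \eqref{EQ:ADJ1}, \eqref{EQ:ADJ2}, \eqref{EQ:ADJ8} for $(\w_k,q_k)$ in the full spaces $\H^2\times H^1$. The crucial observation is that the introduction of the shifted pressure $\tilde q := q-\varphi_u\vartheta$ rewrites \eqref{EQ:ADJ2} as $-\eta\laplace\w+\nu\w+\grad\tilde q = -\vartheta\grad\varphi_u+\upalpha_4(\v_u-\v_d)$ and \eqref{EQ:ADJ8} as $(2\eta\D\w-\tilde q\I)\n=\mathbf 0$, which together with $\divergence(\w)=0$ is exactly the form covered by Lemma~\ref{Stokes_Neumann result} (with $g=0$ and the $\lambda$-term absent); one then recovers $q=\tilde q+\varphi_u\vartheta\in L^2(H^1)$ using $\varphi_u\in C(\overline{\OmegaT})\cap L^\infty(H^2)$. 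Inserting these solvers back into \eqref{WF:ADJ3}--\eqref{WF:ADJ4} reduces the scheme to a linear ODE system for the coefficients, whose global solvability follows as before.

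The heart of the argument is the $k$-uniform a~priori estimate. Testing the (time-reversed) equation \eqref{WF:ADJ3} with $\vartheta_k-\laplace\vartheta_k$ and \eqref{WF:ADJ4} with $-m\laplace(\vartheta_k-\laplace\vartheta_k)$ --- the exact analogues of the test functions producing \eqref{lin_eq_1} --- yields an energy identity controlling $\ddt\normh{1}{\vartheta_k}^2$ together with $\norml{2}{\laplace\vartheta_k}^2$, $\normL{2}{\grad\laplace\vartheta_k}^2$ and $\normH{1}{\w_k}^2$. The coupling terms, most notably $\grad(\mu_u+\chi\sigma_u)\scdot\w$ and $\grad\varphi_u\scdot\w$, are handled by Hölder's, Young's and the Gagliardo--Nirenberg inequalities together with the uniform state bounds of Theorem~\ref{THM:EXS} and Corollary~\ref{COR:INF} (in particular $\grad\varphi_u\in L^\infty(\L^6)$ and $\grad\mu_u\in L^2(\L^6)$). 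Testing \eqref{WF:ADJ5} with $\rho_k$ bounds $\normh{1}{\rho_k}$, and Lemma~\ref{Stokes_Neumann result} controls $(\w_k,q_k)$. The time-dependent prefactor of $\normh{1}{\vartheta_k}^2$ is integrable in $t$, just like the function $\beta$ in \eqref{lin_eq_12}, so Gronwall's lemma closes the estimate; the initial datum $\upalpha_0(\varphi_u(T)-\varphi_f)$ lies in $H^1$ (since $\varphi_u\in C(H^2)$ and $\varphi_f\in H^1$) and enters the bound, which is why the second slot of the $\V_3$-norm appears. All remaining forcing is built from $\varphi_u-\varphi_d$, $\mu_u-\mu_d$, $\sigma_u-\sigma_d$ and $\v_u-\v_d$, accounting for the other slots of $\V_3$.

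Finally, the uniform bounds permit extracting weakly/weakly-$*$ convergent subsequences; since every term is linear in the unknowns, weak convergence (with the strong convergence of the projected test functions) suffices to pass to the limit in \eqref{WF:ADJ0}--\eqref{WF:ADJ5}, giving a weak solution in $\V_2$ with the additional regularity $\rho\in L^2(H^2)$, $\w\in L^2(\H^2)$, $q\in L^2(H^1)$ coming from the elliptic and Brinkman estimates. The initial (hence terminal) condition is attained through the embedding $H^1((H^1)^*)\cap L^\infty(H^1)\hookrightarrow C(L^2)$, and \eqref{LEM:ADJ:EST} follows by weak lower semicontinuity of the norm. Uniqueness is then immediate: the difference of two solutions solves \textnormal{(ADJ)} with vanishing data, so \eqref{LEM:ADJ:EST} forces it to be zero. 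I expect the main obstacle to be closing the coupled a~priori estimate rather than the Brinkman reformulation; in particular the only-$L^2$-in-time integrability of $\grad\mu_u$ must be absorbed into the Gronwall prefactor instead of being bounded uniformly, and the interplay between the fourth-order $\vartheta$-equation, the elliptic $\rho$-equation and the $\w$-system has to be balanced so that every $\normL{2}{\grad\laplace\vartheta_k}^2$ contribution can be absorbed on the left-hand side.
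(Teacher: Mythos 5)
Your plan follows the paper's proof essentially step for step: the same Galerkin scheme as in Proposition \ref{PROP:LIN}, the same pressure shift $q-\varphi_u\vartheta$ reducing \eqref{EQ:ADJ2} to a divergence-free Brinkman system with $\H^2\times H^1$ regularity, the same $\vartheta-\laplace\vartheta$ / $\laplace(\vartheta-\laplace\vartheta)$-type energy estimate closed by a backward-in-time Gronwall argument, elliptic regularity for $\rho$, and uniqueness from linearity plus the a priori bound. The only cosmetic deviations are that the paper invokes a Stokes estimate of Shibata--Shimizu rather than Lemma \ref{Stokes_Neumann result} with $g=0$, and that it bounds $\rho$ by testing \eqref{WF:ADJ4} with $\chi\rho$ and \eqref{WF:ADJ5} with $-\rho$ so that the $\chi\tau\rho$ terms cancel, whereas testing \eqref{WF:ADJ5} with $\rho$ alone leaves a $\tau$-term that you would then have to absorb via \eqref{adj_eq_6a} and the interpolation inequality $\norml{2}{\laplace\vartheta}^2\le\normL{2}{\grad\vartheta}\,\normL{2}{\grad\laplace\vartheta}$.
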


\begin{proof} We will only show a-priori-estimates for the solutions of the adjoint system. The justification can be carried out rigorously within a Galerkin scheme as in the proof of Proposition \ref{PROP:LIN}. In particular, equations \eqref{WF:ADJ0}-\eqref{WF:ADJ5} are satisfied by the Galerkin solutions with the duality product replaced by the $L^2$-scalar-product and $\varphi_{u}(T)-\varphi_f$ replaced by $\mathbb{P}_k(\varphi_{u}(T)-\varphi_f)$, where $\mathbb{P}_k$ denotes the $L^2$-orthogonal projection onto the k-dimensional subspaces spanned by the eigenfunctions of the Neumann-Laplace operator (see proof of Proposition \ref{PROP:LIN}). In the following, we will suppress the subscript $k$. 
\\[1ex]	
Hölder's and Young's inequalities will be frequently used as well as a generic constant $C$ which does not depend on the approximating solutions deduced within the Galerkin scheme. The approach will be split into several steps. 
\\[1ex]
\textit{Step 1:} We define $\pi\coloneqq q-\varphi_{u}\vartheta$. Then, from \eqref{EQ:ADJ1}-\eqref{EQ:ADJ2}, \eqref{EQ:ADJ8}, we see that $(\w,\pi)$ is for almost every $t\in (0,T)$ a solution of
\begin{alignat*}{3}
-\eta\laplace\w + \nu\w + \grad\pi &= \f&&\quad\text{a.e. in }\Omega,\\
\divergence(\w) &= 0&&\quad\text{a.e. in }\Omega,\\
(2\eta\D\w - \pi\I)\n &= \mathbf{0}&&\quad\text{a.e. on }\del\Omega,
\end{alignat*}
where $\f \coloneqq -\vartheta\nabla\varphi_{u} + \upalpha_4(\v_{u}-\v_d)$. Applying \cite[Theorem 3.2]{ShibataShimizu}, we obtain (for a.e. $t\in (0,T)$)
\begin{equation}
\label{adj_eq_1}\normH{2}{\w} + \normh{1}{\pi}\leq C\norml{2}{\f}.
\end{equation}
In particular, by the definition of $\pi$ and $\f$ and using that 
\begin{equation*}
\normL{2}{\vartheta\nabla\varphi_u}\leq C\normh{1}{\vartheta\varphi_u},
\end{equation*}
we have 
\begin{equation}
\label{adj_eq_2}\normH{2}{\w} + \normh{1}{q}\leq C\left(\normh{1}{\vartheta\varphi_u} + \upalpha_4\normL{2}{\v_{u}-\v_d}\right).
\end{equation}
Hence, we have to estimate the first term on the r.h.s. of this equation. Using the boundedness of $\varphi_u\in L^{\infty}(H^2)\cap L^{\infty}(\OmegaT)$ and the Sobolev embedding $H^1\subset L^3(\Omega)$, we obtain
\begin{align*}
\nonumber \normh{1}{\varphi_u\vartheta} &\leq C\left(\norml{2}{\varphi_u\vartheta} + \normL{2}{\vartheta\grad\varphi_u}+\normL{2}{\varphi_u\grad\vartheta}\right)\\
&\leq C\left(\norml{\infty}{\varphi_u}\norml{2}{\vartheta} + \normL{6}{\grad\varphi_u}\norml{3}{\vartheta} + \norml{\infty}{\varphi_u}\normL{2}{\grad\vartheta}\right)\\
&\leq C\normh{1}{\vartheta}.
\end{align*}
Plugging in this inequality into \eqref{adj_eq_2}, we infer that
\begin{align}
\label{adj_eq_5} & \normH{2}{\w} +  \normh{1}{q} \leq C\left( \normh{1}{\vartheta} + \normL{2}{\v_u-\v_d}\right).
\end{align}
\textit{Step 2:} Choosing $\ttau = \chi\rho$ in \eqref{WF:ADJ4}, $\trho = -\rho$ in \eqref{WF:ADJ5} and summing the resulting identities, we obtain
\begin{align*}
\intO |\grad\rho|^2\dx + K\int_{\del\Omega}|\rho|^2\dS + \intO \h(\varphi_u)|\rho|^2\d x&= -\intO \big(P\h(\varphi_u)\vartheta - P\h(\varphi_u)q + \upalpha_3(\sigma_u-\sigma_d)\big)\rho\dx\\
&\quad + \chi \intO \upalpha_2(\mu_u-\mu_d)\rho - m\grad\vartheta\cdot\grad\rho\dx.
\end{align*}
Using the boundedness of $\h(\varphi_u)\in L^{\infty}(\OmegaT)$ and the non-negativity of $\h(\cdot)$, (\ref{adj_eq_5}) and Poincaré's inequality, this implies that
\begin{equation}
\label{adj_eq_6}\normh{1}{\rho}\leq C\left( \normh{1}{\vartheta} + \norml{2}{\mu_u-\mu_d} + \norml{2}{\sigma_u-\sigma_d} + \normL{2}{\v_u-\v_d}\right).
\end{equation}
Choosing $\ttau = \tau$ in \eqref{WF:ADJ4}, integrating by parts, using the boundedness of $\grad\varphi_u\in L^{\infty}(\L^3)$ and (\ref{adj_eq_5}), we obtain
\begin{align*}
\norml{2}{\tau}^2 &= \left|\intO \big(\grad\varphi_u\cdot\w + m\laplace\vartheta + \upalpha_2(\mu_u-\mu_d)\big)\tau\dx\right|\\
&\leq C\left(\normL{3}{\grad\varphi_u}\normL{6}{\w} + \norml{2}{\laplace\vartheta} + \norml{2}{\mu_u-\mu_d}\right)\norml{2}{\tau}\\
&\leq C\left( \norml{2}{\vartheta} + \normL{2}{\v_u-\v_d}+\norml{2}{\laplace\vartheta}+\norml{2}{\mu_u-\mu_d}\right)\norml{2}{\tau}.
\end{align*}
Consequently, we have
\begin{equation}
\label{adj_eq_6a}\norml{2}{\tau}\leq C\left( \norml{2}{\vartheta} +\norml{2}{\laplace\vartheta}+ \normL{2}{\v_u-\v_d}+\norml{2}{\mu_u-\mu_d}\right).
\end{equation}
\textit{Step 3:} Choosing $\tphi = \laplace\vartheta - \vartheta$ in \eqref{WF:ADJ3}, $\ttau = \laplace^2\vartheta - \laplace\vartheta$ in \eqref{WF:ADJ4}, integrating by parts and summing the resulting identities, we obtain
\begin{align}
\nonumber &-\frac{1}{2}\ddt \left(\norml{2}{\vartheta}^2+\normL{2}{\grad\vartheta}^2\right) + m\left(\norml{2}{\laplace\vartheta}^2 + \normL{2}{\grad\laplace\vartheta}^2\right) \\
\nonumber&= \intO \big((P\sigma_u-A)\h'(\varphi_u)(\vartheta-q) + \h'(\varphi_u)\sigma_u\rho + \psi''(\varphi_u)\tau +\upalpha_1(\varphi_u-\varphi_d)\big)\big(\vartheta-\laplace\vartheta\big)\dx\\
\nonumber &\quad + \intO \divergence(\v_u)\vartheta(\laplace\vartheta-\vartheta) + \big(\vartheta\v_u - (\mu_u+\chi\sigma_u)\w\big)\cdot(\grad\laplace\vartheta-\grad\vartheta) + u\h'(\varphi_u)\vartheta(\laplace\vartheta-\vartheta)\dx\\
\label{adj_eq_7}&\quad -\intO \big(\grad(\grad\varphi_u\cdot\w) +  \upalpha_2\grad(\mu_u-\mu_d)\big)\cdot\grad\laplace\vartheta + \big(\grad\varphi_u\cdot\w + \upalpha_2(\mu_u-\mu_d)\big)\laplace\vartheta\dx.
\end{align}
Using the boundedness of $\h'(\varphi_u),\psi''(\varphi_u)\in L^{\infty}(\OmegaT),\,\sigma_u\in L^{\infty}(L^6)$ and (\ref{adj_eq_5})-(\ref{adj_eq_6}), we calculate
\begin{align}
\nonumber &\left|\intO \big(-A\h'(\varphi_u)(\vartheta-q) + \h'(\varphi_u)\sigma_u\rho + \upalpha_1(\varphi_u-\varphi_d)\big)\big(\vartheta-\laplace\vartheta\big)\dx\right|\\
\nonumber&\leq C\left(\norml{2}{\vartheta}^2 + \norml{2}{\rho}^2 + \norml{2}{q}^2 + \norml{2}{\varphi_u-\varphi_d}^2\right) + \tfrac{m}{16}\norml{2}{\laplace\vartheta}^2\\
\label{adj_eq_8}&\leq C\left(\normh{1}{\vartheta}^2 + \norml{2}{\varphi_u-\varphi_d}^2 + \norml{2}{\sigma_u-\sigma_d}^2 + \norml{2}{\mu_u-\mu_d}^2 + \normL{2}{\v_u-\v_d}^2\right)  + \tfrac{m}{16}\norml{2}{\laplace\vartheta}^2.
\end{align}
%	By the Gagliardo-Nirenberg inequality, we obtain
%	\begin{equation*}
%	\norml{3}{\laplace\vartheta}\leq C\norml{2}{\laplace\vartheta}^{\frac{1}{2}}\normh{1}{\laplace\vartheta}^{\frac{1}{2}}\leq C\left(\norml{2}{\laplace\vartheta}+\norml{2}{\laplace\vartheta}^{\frac{1}{2}}\normL{2}{\nabla\laplace\vartheta}^{\frac{1}{2}}\right)\leq C\left(\norml{2}{\laplace\vartheta}+\normL{2}{\nabla\laplace\vartheta}\right).
%	\end{equation*}
Hence, using the boundedness of $\h'(\varphi_u)\in L^{\infty}(\Omega_T),\,\sigma_{u}\in L^{\infty}(Q)$ and (\ref{adj_eq_5}) yields
\begin{align}
\nonumber\left|\intO P\sigma_u\h'(\varphi_u)(\vartheta-q)(\vartheta-\laplace\vartheta)\dx \right|	&\leq C(\norml{2}{\vartheta}+\norml{2}{q})\left(\norml{2}{\vartheta}+\norml{2}{\laplace\vartheta}\right)\\
\nonumber&\leq C\left(\normh{1}{\vartheta}^2 + \norml{2}{q}^2\right) + \tfrac{m}{16}\norml{2}{\laplace\vartheta}^2\\
\label{adj_eq_8a}&\leq C\left(\normh{1}{\vartheta}^2 + \normL{2}{\v_u - \v_d}^2\right)+ \tfrac{m}{16}\norml{2}{\laplace\vartheta}^2.
\end{align} 
Furthermore, using the boundedness of $\psi''(\varphi_u)\in L^{\infty}(\OmegaT)$, (\ref{adj_eq_6a}) and the inequality
\begin{equation*}
\norml{2}{\laplace\vartheta}^2\leq \normL{2}{\grad\vartheta}\normL{2}{\grad\laplace\vartheta}\quad\forall \vartheta\in H^3,~\deln\vartheta = 0\quad\text{a.e. on }\del\Omega,
\end{equation*}
we obtain
\begin{align}
\nonumber\left|\intO \psi''(\varphi_u)\tau(\vartheta-\laplace\vartheta)\dx\right| &\leq C(\norml{2}{\vartheta}^2 + \normL{2}{\v_u-\v_d}^2 + \norml{2}{\mu_u-\mu_d}^2) + C\norml{2}{\laplace\vartheta}^2\\
\nonumber&\leq C\left(\norml{2}{\vartheta}^2 + \normL{2}{\v_u-\v_d}^2 + \norml{2}{\mu_u-\mu_d}^2\right) + C\normL{2}{\grad\vartheta}\normL{2}{\grad\laplace\vartheta}\\
\label{adj_eq_8b}&\leq  C\left(\normh{1}{\vartheta}^2 + \normL{2}{\v_u-\v_d}^2 + \norml{2}{\mu_u-\mu_d}^2\right) + \tfrac{m}{16}\normL{2}{\grad\laplace\vartheta}^2.
\end{align}
From the Sobolev embeddings $H^1\subset L^3(\Omega),~\H^1\subset \L^6$, we obtain
\begin{align}
\nonumber \left|\intO\divergence(\v_u)\vartheta(\laplace\vartheta-\vartheta)\dx\right|&\leq \norml{6}{\divergence(\v_u)}\norml{3}{\vartheta}(\norml{2}{\vartheta}+\norml{2}{\laplace\vartheta})\\
\label{adj_eq_9}&\leq C\left(1+\normH{2}{\v_u}^2\right)\normh{1}{\vartheta}^2 + \tfrac{m}{16}\norml{2}{\laplace\vartheta}^2.
\end{align}
Using the Sobolev embeddings $H^1\subset L^6,\, \H^1\subset \L^3, \H^2\subset \L^{\infty}$, (\ref{adj_eq_5}) and the boundedness of $\mu_u +\chi\sigma_u\in L^{\infty}(L^2)$, we calculate
\begin{align}
\nonumber &\left|\intO \big(\vartheta\v_u - (\mu_u+\chi\sigma_u)\w\big)\scdot(\grad\laplace\vartheta-\grad\vartheta)\dx\right|
\leq \left(\norml{6}{\vartheta}\normL{3}{\v_u} + \norml{2}{\mu_u+\chi\sigma_u}\normL{\infty}{\w}\right)\left(\normL{2}{\grad\vartheta} + \normL{2}{\grad\laplace\vartheta}\right)\\
&\quad\leq C\left(1+\normH{1}{\v_u}^2 + \normL{2}{\v_u-\v_d}^2 \right)\normh{1}{\vartheta}^2
\label{adj_eq_10}  + C\normL{2}{\v_u-\v_d}^2 + \tfrac{m}{16}\normL{2}{\grad\laplace\vartheta}^2.
\end{align}
With similar arguments and using the boundedness of $\grad\varphi_u\in L^{\infty}(\L^6)$, we obtain
\begin{align}
\nonumber&\left|\intO \upalpha_2\grad(\mu_u-\mu_d)\cdot\grad\laplace\vartheta + \big(\grad\varphi_u\cdot\w + \upalpha_2(\mu_u-\mu_d)\big)\laplace\vartheta\dx\right|\\
\nonumber &\quad\leq \upalpha_2\normL{2}{\grad(\mu_u-\mu_d)}\normL{2}{\grad\laplace\vartheta} + \left(\normL{6}{\grad\varphi_u}\normL{3}{\w} + \upalpha_2\norml{2}{\mu_u-\mu_d}\right)\norml{2}{\laplace\vartheta}\\
\label{adj_eq_11}&\quad\leq C\left(\normh{1}{\vartheta}^2+\normh{1}{\mu_u-\mu_d}^2 + \normL{2}{\v_u-\v_d}^2\right) + \tfrac{m}{16}\left(\norml{2}{\laplace\vartheta}^2 + \normL{2}{\grad\laplace\vartheta}^2\right).
\end{align}
Now, with exactly the same arguments as used for (\ref{lin_eq_7}), we get
\begin{equation}
\label{adj_eq_12}\left|\intO u\h'(\varphi_u)\vartheta(\laplace\vartheta-\vartheta)\dx\right|\leq C(1+\norml{2}{u}^2)\normh{1}{\vartheta}^2 + \tfrac{m}{16}\left(\norml{2}{\laplace\vartheta}^2 + \normL{2}{\grad\laplace\vartheta}^2\right).
\end{equation}
It remains to analyse the term
\begin{equation*}
\intO \grad(\grad\varphi_u\cdot\w)\cdot\grad\laplace\vartheta\dx = \intO (\grad^2\varphi_u\w)\cdot\grad\laplace\vartheta + (\grad\w^T\grad\varphi_u)\cdot\grad\laplace\vartheta.
\end{equation*}
For the first term, we apply the Sobolev embedding $\H^2\subset \L^{\infty}$, the boundedness of $\varphi_u\in L^{\infty}(H^2)$ and (\ref{adj_eq_5}) to obtain
\begin{align}
\nonumber\left|\intO(\grad^2\varphi_u\w)\cdot\grad\laplace\vartheta\dx\right|&\leq \normL{2}{\grad^2\varphi_u}\normL{\infty}{\w}\normL{2}{\grad\laplace\vartheta}\\
\nonumber&\leq C\left(\normh{1}{\vartheta} + \normL{2}{\v_u-\v_d}\right)\normL{2}{\grad\laplace\vartheta}\\
\label{adj_eq_13} &\leq C\left(\normh{1}{\vartheta}^2 + \normL{2}{\v_u-\v_d}^2\right)+ \tfrac{m}{16}\normL{2}{\grad\laplace\vartheta}^2.
\end{align}
With similar arguments and using the Sobolev embeddings $\H^1\subset \L^6, \H^1\subset \L^3$, we infer that
\begin{align}
\nonumber\left|\intO (\grad\w^T\grad\varphi_u)\cdot\grad\laplace\vartheta\dx\right| &\leq \normL{6}{\grad\varphi_u}\normH{2}{\w}\normL{2}{\grad\laplace\vartheta}\\
\label{adj_eq_14}&\leq C\left(\normh{1}{\vartheta}^2 + \normL{2}{\v_u-\v_d}^2\right) + \tfrac{m}{16}\normL{2}{\grad\laplace\vartheta}^2.
\end{align}
Collecting (\ref{adj_eq_8})-(\ref{adj_eq_14}) and plugging in into (\ref{adj_eq_7}), we obtain
\begin{align}
\label{adj_eq_14a} -\frac{1}{2}\ddt \left(\norml{2}{\vartheta}^2+\normL{2}{\grad\vartheta}^2\right) + \frac{m}{2}\left(\norml{2}{\laplace\vartheta}^2 + \normL{2}{\grad\laplace\vartheta}^2\right)\leq  \beta_1(t)\normh{1}{\vartheta(t)}^2 + \beta_2(t)
\end{align}
where 
\begin{align*}
\beta_1(t) &\coloneqq C\left(1+\normH{2}{\v_u(t)}^2 + \normh{1}{(\mu_u+\chi\sigma_u)(t)}^2+\normL{2}{(\v_u-\v_d)(t)}^2 + \norml{2}{u(t)}^2 + \normh{4}{\varphi_u(t)}^2\right), \\
\beta_2(t)&\coloneqq C\left(\norml{2}{\varphi_u-\varphi_d}^2 + \normh{1}{\mu_u-\mu_d}^2 + \norml{2}{\sigma_u-\sigma_d}^2 + \normL{2}{\v_u-\v_d}^2\right).
\end{align*}
Due to Theorem \ref{THM:EXS} and \textnormal{(A5)}, it is easy to check that $\beta_1,~\beta_2\in L^1(0,T)$. Therefore, integrating (\ref{adj_eq_14a}) in time from $s\in (0,T)$ to $T$ and using that $\varphi_u\in C(H^1)$ with bounded norm, a Gronwall argument yields
\begin{align}
\label{adj_eq_15}\norm{\vartheta}_{L^{\infty}(H^1)\cap L^2(H^3)}\leq C\norm{(\varphi_u-\varphi_d,\varphi_u(T)-\varphi_f,\mu_u-\mu_d,\sigma_u-\sigma_d,\v_u-\v_d)}_{\V_3}.
\end{align}
Together with (\ref{adj_eq_5})-(\ref{adj_eq_6a}), this implies
\begin{align}
\nonumber&\norm{\tau}_{L^2(\OmegaT)} + \norm{\rho}_{L^2(H^1)} + \norm{\w}_{L^2(\H^2)}  + \norm{q}_{L^2(H^1)}\\
\label{adj_eq_16}&\quad\leq C\norm{(\varphi_u-\varphi_d,\varphi_u(T)-\varphi_f,\mu_u-\mu_d,\sigma_u-\sigma_d,\v_u-\v_d)}_{\V_3}.
\end{align}
%	Since the terms on the right-hand side of (\ref{adj_eq_4}) belong to $L^{8/3}(\H^1)$, upon integrating in time from $0$ to $T$ and using we obtain
%	\begin{equation}
%	\label{adj_eq_17}\norm{\w}_{L^{8/3}(\H^1)} +  \norm{q}_{L^{8/3}(L^2)}\leq C\norm{(\varphi_u-\varphi_d,\varphi_u(T)-\varphi_f,\mu_u-\mu_d,\sigma_u-\sigma_d,\v_u-\v_d)}_{\V_3}.
%	\end{equation}
We now take $\ttau = -\laplace\tau$ in \eqref{WF:ADJ4} and integrate by parts to get
\begin{equation}
\label{adj_eq_18}\normL{2}{\grad\tau}^2 = \intO \big((\grad^2\varphi_u\w) + (\grad\w^T\grad\varphi_u) + m\grad\laplace\vartheta +\upalpha_2\grad(\mu_u-\mu_d)\big)\cdot\grad\tau\dx.
\end{equation}
For the last two terms on the right-hand side of this identity, we easily obtain
\begin{equation}
\label{adj_eq_19}\left|\intO m\grad\laplace\vartheta +\upalpha_2\grad(\mu_u-\mu_d)\big)\cdot\grad\tau\dx\right|\leq C\left(\normL{2}{\grad\laplace\vartheta}^2 + \normh{1}{\mu_u-\mu_d}^2\right) + \frac{1}{4}\normL{2}{\grad\tau}^2.
\end{equation}
For the other terms, we use the Sobolev embeddings $\H^1\subset \L^6,\H^1\subset \L^3,\,\H^2\subset \L^{\infty},$ and the boundedness of $\varphi_u\in L^{\infty}(H^2)$ to deduce
\begin{equation}
\label{adj_eq_20}\left|\intO \big((\grad^2\varphi_u\w) + (\grad\w^T\grad\varphi_u) \big)\cdot\grad\tau\dx\right|\leq C\normH{2}{\w}^2 + \frac{1}{4}\normL{2}{\grad\tau}^2.
\end{equation}
Plugging in (\ref{adj_eq_19})-(\ref{adj_eq_20}), we obtain
\begin{equation*}
\normL{2}{\grad\tau}^2\leq C\left(\normL{2}{\grad\laplace\vartheta}^2 + \normh{1}{\mu_u-\mu_d}^2 \normH{2}{\w}^2\right).
\end{equation*}
Integrating this inequality in time from $0$ to $T$, using the boundedness of $\varphi_u\in L^2(H^4)$ and (\ref{adj_eq_15}), (\ref{adj_eq_16}), we infer that
\begin{align}
\label{adj_eq_21}\norm{\grad\tau}_{L^{2}(\L^2)} \leq C\norm{(\varphi_u-\varphi_d,\varphi_u(T)-\varphi_f,\mu_u-\mu_d,\sigma_u-\sigma_d,\v_u-\v_d)}_{\V_3}.
\end{align}
Summarizing (\ref{adj_eq_15})-(\ref{adj_eq_16}) and (\ref{adj_eq_21}) and using a comparison argument in (\ref{EQ:ADJ3}), we deduce that
\begin{align}
\nonumber &\norm{(\vartheta,\tau,\rho,\w,q)}_{\V_2} + \norm{\w}_{L^2(\H^2)} +  \norm{q}_{L^2(H^1)}\\
\label{adj_eq_22}&\quad\leq C\norm{(\varphi_u-\varphi_d,\varphi_u(T)-\varphi_f,\mu_u-\mu_d,\sigma_u-\sigma_d,\v_u-\v_d)}_{\V_3}.
\end{align}
\textit{Step 5:} Since (\ref{WF:ADJ5}) is the weak formulation of (\ref{EQ:ADJ5})-(\ref{EQ:ADJ6}), by elliptic regularity theory we obtain
\begin{equation}
\label{adj_eq_31}\normh{2}{\rho}\leq C\norml{2}{\h(\varphi_u)\rho - \chi\tau + P\h(\varphi_u)\vartheta +\chi\grad\varphi_u\scdot\w - P\h(\varphi_u)q + \upalpha_4(\sigma_u \sminus \sigma_d)}.
\end{equation}
Using the boundedness of $\grad\varphi_u\in L^{\infty}(\L^3)$ and the Sobolev embedding $\H^1\subset \L^6$ we calculate
\begin{equation}
\label{adj_eq_32}\norml{2}{\grad\varphi_u\cdot\w}\leq C\normL{3}{\grad\varphi_u}\normL{6}{\w}\leq C\normH{1}{\w}.
\end{equation}
Therefore, using (\ref{adj_eq_22}) and (\ref{adj_eq_32}) in (\ref{adj_eq_31}), the boundedness of $\h(\varphi_u)\in L^{\infty}(\OmegaT)$ yields
\begin{align}
\label{adj_eq_33} \norm{\rho}_{L^2(H^2)}\leq C\norm{(\varphi_u-\varphi_d,\varphi_u(T)-\varphi_f,\mu_u-\mu_d,\sigma_u-\sigma_d,\v_u-\v_d)}_{\V_3}.
\end{align} 
Summarizing (\ref{adj_eq_22})-(\ref{adj_eq_33}), we have shown that
\begin{align}
\nonumber &\norm{(\vartheta,\tau,\rho,\w,q)}_{\V_2} +  \norm{\rho}_{L^2(H^2)} + \norm{\w}_{L^2(\H^2)} +  \norm{q}_{L^2(H^1)}\\
\label{adj_eq_34}&\quad\leq C\norm{(\varphi_u-\varphi_d,\varphi_u(T)-\varphi_f,\mu_u-\mu_d,\sigma_u-\sigma_d,\v_u-\v_d)}_{\V_3}.
\end{align}
\textit{Step 6:}
Because of (\ref{adj_eq_34}), we can pass to the limit in (\ref{WF:ADJ0})-(\ref{WF:ADJ5}) to obtain the existence of weak solutions. In particular, we infer that (\ref{EQ:ADJ2}), (\ref{EQ:ADJ4})-(\ref{EQ:ADJ8}) are fulfilled almost everywhere in the respective sets. We notice that (\ref{WF:ADJ0}) is fulfilled due to the compact embedding $H^1(H^{1\ast})\cap L^{\infty}(H^1)\subset C(L^2)$, see \cite[sect.\,8, Cor.~4]{Simon}. Moreover, the estimate \eqref{LEM:ADJ:EST} results from the weak(-star) lower semicontinuity of norms. Assuming that there exists any further weak solution of (ADJ) one can show (similarly to the above procedure) that it also satisfies inequality \eqref{adj_eq_34}. Hence, uniqueness of the weak solution follows due to linearity of the system.
\end{proof}

\subsection{Necessary conditions for local optimality}

In the following we characterize locally optimal solutions of \eqref{OCP} by necessary conditions which are particularly important for computational optimization. The adjoint variables can be used to express the variational inequality in a very concise form:

\begin{theorem}
	Let $\u\in\U$ be a locally optimal control of the minimization problem \eqref{OCP}. Then $\u$ satisfies the \textbf{variational inequality} that is
	\begin{align}
	\label{VIQ}
		\intOT \big[\kappa\u -\vartheta_\u\, \h(\varphi_\u) \big] (u-\u) \dxt \ge 0\quad\text{for all}\; u\in\U.
	\end{align}
\end{theorem}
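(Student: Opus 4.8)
The plan is to combine the abstract first-order optimality condition with a duality argument that eliminates the linearized state in favour of the adjoint state.

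First I would establish the abstract variational inequality. Since the control-to-state operator $S$ is Fréchet differentiable on $\UR$ by Lemma~\ref{LEM:FRE} and the cost functional $I$ is a finite sum of squared Hilbert-space norms (hence smooth), the reduced cost functional $J=I(S(\cdot),\cdot)$ is Fréchet differentiable on $\UR$ by the chain rule. Because $\U$ is convex, for any $u\in\U$ the segment $\u+t(u-\u)$ lies in $\U$ for $t\in[0,1]$ and, for small $t$, within the $\delta$-neighbourhood from local optimality; thus $t\mapsto J(\u+t(u-\u))$ has a nonnegative right derivative at $t=0$, which gives $J'(\u)[u-\u]\ge 0$ for all $u\in\U$. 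Writing $h:=u-\u$ and letting $(\varphi',\mu',\sigma',\v',p'):=S'(\u)[h]$ be the weak solution of (LIN) with $F_1=F_3=F_4=0$, $\F=\mathbf 0$ and $F_2=-h\,\h(\varphi_\u)$ provided by Lemma~\ref{LEM:FRE}, the chain rule yields
\begin{align*}
J'(\u)[h] &= \upalpha_0\big(\varphi_\u(T)-\varphi_f,\varphi'(T)\big)_{L^2} + \upalpha_1\big(\varphi_\u-\varphi_d,\varphi'\big)_{L^2(\OmegaT)} + \upalpha_2\big(\mu_\u-\mu_d,\mu'\big)_{L^2(\OmegaT)} \\
&\quad + \upalpha_3\big(\sigma_\u-\sigma_d,\sigma'\big)_{L^2(\OmegaT)} + \upalpha_4\big(\v_\u-\v_d,\v'\big)_{L^2(\OmegaT)} + \kappa\big(\u,h\big)_{L^2(\OmegaT)}.
\end{align*}

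The heart of the proof is to express the first five terms through the adjoint state. Let $(\vartheta,\tau,\rho,\w,q)$ be the unique weak solution of (ADJ) associated with $\u$, furnished by Theorem~\ref{LEM:ADJ}. The plan is the classical duality pairing: I would test the weak linearized equations against the adjoint variables and the weak adjoint equations against the linearized variables, namely \eqref{WF:LIN3} with $\tvarphi=\vartheta$, \eqref{WF:LIN4} with $\tmu=\tau$, \eqref{WF:LIN5} with $\tsigma=\rho$, \eqref{WF:LIN2} with $\tv=\w$, against \eqref{WF:ADJ3} with $\tphi=\varphi'$, \eqref{WF:ADJ4} with $\ttau=\mu'$, \eqref{WF:ADJ5} with $\trho=\sigma'$, \eqref{WF:ADJ2} with $\tw=\v'$, then integrate in time and sum. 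The two time-derivative contributions combine, via integration by parts in time, into $\int_0^T\ddt(\varphi',\vartheta)_{L^2}\dt=(\varphi'(T),\vartheta(T))_{L^2}-(\varphi'(0),\vartheta(0))_{L^2}$; using $\varphi'(0)=0$ from \eqref{WF:LIN0} and the terminal condition $\vartheta(T)=\upalpha_0(\varphi_\u(T)-\varphi_f)$ from \eqref{WF:ADJ0}, this reproduces exactly the end-time term $\upalpha_0(\varphi_\u(T)-\varphi_f,\varphi'(T))_{L^2}$.

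The decisive structural fact is that (ADJ) was designed so that, after this pairing, all bilinear terms in the linearized and adjoint variables appear on both sides and cancel. The Brinkman operator is symmetric, so the viscous and pressure contributions from \eqref{WF:LIN2} and \eqref{WF:ADJ2} match; the inter-equation couplings (the terms $(\mu'+\chi\sigma')\grad\varphi_\u$, $\grad(\mu_\u+\chi\sigma_\u)\cdot\w$, the $P\h(\varphi_\u)\vartheta$ and $\chi\grad\varphi_\u\cdot\w$ terms, etc.) are paired precisely by the corresponding source terms built into \eqref{WF:ADJ3}--\eqref{WF:ADJ5}; and every boundary integral vanishes owing to the Robin conditions for $\sigma'/\rho$ (\eqref{WF:LIN5},\,\eqref{EQ:ADJ6}), the no-friction conditions for $\v'/\w$, and the tailored Neumann datum $\deln\tau=\vartheta\v_\u\scdot\n-(\mu_\u+\chi\sigma_\u)\w\scdot\n$ in \eqref{EQ:ADJ9}, which is exactly what neutralizes the convective and velocity boundary terms. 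What survives on one side are precisely the tracking integrals with $\upalpha_1,\dots,\upalpha_4$, and on the other side only the source contribution $\int_{\OmegaT}F_2\,\vartheta\dxt=-\int_{\OmegaT}h\,\h(\varphi_\u)\,\vartheta\dxt$. Substituting into the expression for $J'(\u)[h]$ and recalling $h=u-\u$ gives
\begin{equation*}
J'(\u)[u-\u] = \intOT \big[\kappa\u-\vartheta\,\h(\varphi_\u)\big](u-\u)\dxt,
\end{equation*}
so that the abstract inequality $J'(\u)[u-\u]\ge 0$ is precisely \eqref{VIQ}.

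I expect the bookkeeping of these cancellations to be the main obstacle: one must verify term by term that every product of a linearized and an adjoint quantity cancels and that each boundary integral (the nutrient Robin terms, the stress/no-friction velocity terms, and the artificial Neumann datum for $\tau$) disappears, since a single sign error or a stray boundary term would spoil the identity. Secondarily, one must justify the integration by parts in time rigorously given the available regularity ($\varphi'\in H^1((H^1)^*)\cap L^\infty(H^1)$ and $\vartheta$ in the same class) and carry the whole computation out at the Galerkin level before passing to the limit, exactly as in the proofs of Proposition~\ref{PROP:LIN} and Theorem~\ref{LEM:ADJ}.
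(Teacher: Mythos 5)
Your proposal is correct and follows essentially the same route as the paper: the abstract condition $J'(\u)[u-\u]\ge 0$ from convexity and Fréchet differentiability, followed by the duality pairing of the weak forms of (LIN) (with $F_2=-h\,\h(\varphi_\u)$) against the adjoint variables and of (ADJ) against the linearized variables, with the time integration by parts matching the initial condition $\varphi'(0)=0$ to the terminal condition $\vartheta(T)=\upalpha_0(\varphi_\u(T)-\varphi_f)$ and all remaining bilinear terms cancelling to leave exactly the tracking integrals and $-\intOT h\,\h(\varphi_\u)\vartheta\dxt$. You also correctly identify the two points requiring care, namely the term-by-term cancellation bookkeeping (including the boundary terms neutralized by \eqref{EQ:ADJ9} and the Robin conditions) and the justification of the integration by parts in time given $\varphi',\vartheta\in H^1((H^1)^\ast)\cap L^2(H^1)$.
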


\begin{proof}
	In Lemma \ref{LEM:FRE} we have showed that the control-to-state operator is Fréchet differentiable with respect to the norm on $\V_2$. Fréchet differentiability of the reduced cost functional $J$ immediately follows. Its derivative can be computed by chain rule. Hence, since $\u$ is a locally optimal control, it holds that
	\begin{align*}
		J\big(\u + t(u-\u)\big) \ge J(\u)
	\end{align*}
	for all $u\in\U$ and $t\in (0,1]$ sufficiently small. As the cost functional $J$ is Fréchet differentiable, we infer that
	\begin{align}
	\label{VIQ2}
		0 &\le \underset{t\searrow 0}{\lim}\; \frac 1 t \big[ J\big(\u + t(u-\u)\big) - J(\u)\big]= J'(\u)[u-\u] \notag\\
		&= \upalpha_0 \intO (\varphi_\u(T)-\varphi_f) \varphi'_\u[u-\u](T) \dx 
			+ \upalpha_1 \intOT (\varphi_\u-\varphi_d) \varphi'_\u[u-\u]\dx \notag\\
		&\quad + \upalpha_2 \intOT (\mu_\u-\mu_d) \mu'_\u[u-\u]\dx
			+ \upalpha_3 \intOT (\sigma_\u-\sigma_d) \sigma'_\u[u-\u]\dx  \notag\\
		&\quad + \upalpha_4 \intOT (\v_\u-\v_d)\cdot \v'_\u[u-\u]\dx
			+ \intOT \kappa \u(u-\u) \dx.
	\end{align}
	Therefore, it remains to show that the sum of the first five addends on the right-hand side of \eqref{VIQ2} is equal to $\intOT \vartheta_\u\h(\varphi_\u)(u-\u) \dx$. For brevity and to reduce the amount of indices we write $h:=u-\u$ and
	\begin{gather*}
		(\varphi,\mu,\sigma,\v,p) := (\varphi_\u,\mu_\u,\sigma_\u,\v_\u,p_\u), \qquad
		(\vartheta,\tau,\rho,\w,q) := (\vartheta_\u,\tau_\u,\rho_\u,\w_\u,q_\u), \\
		(\tvarphi,\tmu,\tsigma,\tv,\tp) := (\varphi_\u'[u-\u],\mu_\u'[u-\u],\sigma_\u'[u-\u],\v_\u'[u-\u],p_\u'[u-\u]).
	\end{gather*}
	In the following, the strategy is to test the weak formulations of the linearized system (which produces the Fréchet derivative) with the adjoint variables. Testing \eqref{EQ:LIN3} where $F_2 = -h\,\h(\varphi)$ with $\vartheta$ yields
	\begin{align}
		\label{EQ:VIQ1}
		0 &= \int_0^T \langle \delt\tvarphi, \vartheta\rangle_{H^1} \dt  + \intOT m\grad\tmu\cdot\grad\vartheta \dxt \notag \\
		&\quad + \intOT \big[\divergence(\varphi\tv) + \divergence(\tvarphi\v) - (P\sigma-A-u)\h'(\varphi)\tvarphi + h\,\h(\varphi)- P\h(\varphi)\tsigma\big]\,\vartheta\dxt.	
	\end{align}
	Since both $\tvarphi$ and $\vartheta$ lie in $H^1\big((H^1)^\ast\big)\cap L^2(H^1)$ integration by parts with respect to $t$ is permitted. We obtain
	\begin{align*}
		\int_0^T \langle \delt\tvarphi, \vartheta\rangle_{H^1} \dt = \upalpha_0 \intO \tvarphi(T)\big(\varphi(T)-\varphi_f\big)\dx 
			- \int_0^T \langle \delt\vartheta{,} \tvarphi\rangle_{H^1} \dt
	\end{align*}
	because of the initial value condition $\tvarphi(0) = 0$ and the final value condition $\vartheta(T)=\varphi(T)-\varphi_f$ which are satisfied almost everywhere in $\Omega$. The term $\delt\vartheta$ can be replaced using the weak formulation \eqref{WF:ADJ3} tested with $\tvarphi$. We obtain that
	\begin{align}
	\label{EQ:VIQ1.1}
	0 &= \upalpha_0 \intO \tvarphi(T)\big(\varphi(T)-\varphi_f\big)\dx + \intOT \grad\tau\cdot\grad\tvarphi + (\mu+\chi\sigma)\grad\tvarphi\cdot\w \dxt \notag\\
	&\quad +\intOT \h'(\varphi)\sigma\rho\tvarphi + \psi''(\varphi)\tau\tvarphi - (P\sigma-A)\h'(\varphi)q\tvarphi +\upalpha_1(\varphi-\varphi_d)\tvarphi \dxt\notag \\
	&\quad +\intOT \vartheta\,\divergence(\varphi\tv) - P\h(\varphi)\tsigma\vartheta + h\h(\varphi)\vartheta + m\grad\tmu\cdot\grad\vartheta  \dxt .
	\end{align}
	Since $\divergence(\w)=0$ almost everywhere in $\OmegaT$, we have $\T(\tv,\tp):\grad \w = 2\eta\D\tv:\grad \w = 2\eta\D\w:\grad \tv$. This equality, the weak formulation \eqref{WF:LIN2} tested with $\w$ and the weak formulation \eqref{WF:ADJ2} tested with $\tv$ can be used to deduce that
	\begin{align}
	\label{EQ:VIQ2}
	0 &= \intOT \T(\tv,\tp):\grad \w + \nu \tv\w - (\mu+\chi\sigma)\grad\tvarphi\cdot\w - (\tmu+\chi\tsigma)\grad\varphi\cdot\w \dxt \notag\\
	& \quad = \intOT \big[q\,\divergence(\tv) - \vartheta\grad\varphi\cdot\tv - \varphi\vartheta\,\divergence(\tv) + \upalpha_4(\v-\v_d)\cdot\tv \notag\\[-2mm]
	&\qquad\qquad\qquad - (\mu+\chi\sigma)\grad\tvarphi\cdot\w - (\tmu+\chi\tsigma)\grad\varphi\cdot\w \big] \dxt.
	\end{align}
	Proceeding similarly with the remaining linearized equations and adjoint variables gives
	\begin{align}
	\label{EQ:VIQ3}
	0 &= \intOT \tmu\tau - \psi''(\varphi)\tvarphi\tau + \chi\tsigma\tau - \grad\tvarphi\cdot\grad\tau \dxt \notag\\
	&=\intOT \grad\varphi\cdot\w \tmu -m\grad\vartheta\cdot\grad\tmu - \psi''(\varphi)\tvarphi\tau + \chi\tsigma\tau - \grad\tvarphi\cdot\grad\tau + \upalpha_2 (\mu-\mu_d)\tmu \dxt,\\
	\label{EQ:VIQ4}
	0 &= \intOT -q\,\divergence(\tv) + P\tsigma\h(\varphi)q + (P\sigma-A)\h'(\varphi)\tvarphi q \dxt, \\
	\label{EQ:VIQ5}
	0 &= -\intOT \grad\tsigma\cdot\grad\rho + \h'(\varphi)\tvarphi\sigma\rho + \h(\varphi)\tsigma\rho \dxt - \intGT K\tsigma\rho \dS\dt \notag\\
	& = \intOT P\h(\varphi)\vartheta\tsigma + \chi\grad\varphi\cdot\w\tsigma - P\h(\varphi)q\tsigma + \upalpha_3(\sigma-\sigma_d)\tsigma-\chi\tau\tsigma - \h'(\varphi)\tvarphi\sigma\rho \dxt.
	\end{align}
	Adding up the equations \eqref{EQ:VIQ1.1}-\eqref{EQ:VIQ5} we ascertain that a large number of terms cancels out. We obtain
	\begin{align}
	\label{EQ:VIQ6}
	0 &= \upalpha_0 \intO \big(\varphi(T)-\varphi_f\big)\tilde\varphi(T) \dx + \upalpha_1 \intOT (\varphi-\varphi_d)\varphi \dx + \upalpha_2 \intOT (\mu-\mu_d)\tmu \dxt  \notag\\
	&\quad + \upalpha_3 \intOT (\sigma-\sigma_d)\tsigma\dxt + \upalpha_4 \intOT (\v-\v_d)\tv \dxt + \intOT \h(\varphi)\vartheta h \dxt.
	\end{align}
	Together with \eqref{VIQ2} this completes the proof.
\end{proof}

As our set of admissible controls is a box-restricted subset of $L^2(L^2)$, a locally optimal control $\u$ can be characterized by a projection of $\frac 1 \kappa\, \vartheta_\u\, \h(\varphi_\u)$ onto the set $\U$. The following corollary is a standard result of optimal control theory:

\begin{corollary}
	Let $\u\in\U$ be a locally optimal control of the minimization problem \eqref{OCP}. Then $u$ is given implicitly by the \textbf{projection formula}
	\begin{align}
		\label{PRF}
		\u(x,t) = \mathbb P_{[a(x,t),b(x,t)]}\left(\frac 1 \kappa \,\vartheta_\u(x,t)\, \h\big(\varphi_\u(x,t)\big)\right)
	\end{align}
	for almost all $(x,t)\in\Omega_T$ where the projection $\mathbb P$ is defined by
	\begin{align*}
		\mathbb P_{[a,b]}(s) = \max\big\{a,\min\{b,s\}\big\}
	\end{align*}
		for any $a,b,s\in\R$ with $a\le b$. This constitutes another necessary condition for local optimality that is equivalent to condition \eqref{VIQ}.
\end{corollary}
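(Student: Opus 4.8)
Since $\u$ is locally optimal, the preceding theorem guarantees that it satisfies the variational inequality \eqref{VIQ}. The plan is to reduce \eqref{VIQ} to a pointwise condition and then identify that condition with the defining property of the metric projection onto a bounded interval. To this end, abbreviate
\begin{align*}
	\Phi := \kappa\,\u - \vartheta_\u\,\h(\varphi_\u)\in L^2(\OmegaT),
\end{align*}
so that \eqref{VIQ} reads $\intOT \Phi\,(u-\u)\dxt\ge 0$ for all $u\in\U$. As \eqref{PRF} involves $\tfrac1\kappa$, I assume $\kappa>0$ throughout.

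First I would localize \eqref{VIQ}. Consider the competitor
\begin{align*}
	u^\ast := a\,\mathds{1}_{\{\Phi>0\}} + b\,\mathds{1}_{\{\Phi<0\}} + \u\,\mathds{1}_{\{\Phi=0\}},
\end{align*}
which is measurable (because $a,b,\u,\Phi$ are) and lies in $\U$ (its values lie in $[a,b]$ pointwise). By construction $\Phi\,(u^\ast-\u)\le 0$ almost everywhere: on $\{\Phi>0\}$ we have $a-\u\le 0$, on $\{\Phi<0\}$ we have $b-\u\ge 0$, and on $\{\Phi=0\}$ the integrand vanishes. Hence $\intOT\Phi(u^\ast-\u)\dxt\le 0$, while \eqref{VIQ} with $u=u^\ast$ yields the reverse inequality. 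Therefore the integral vanishes, and since the integrand has a fixed sign it must vanish almost everywhere. Reading this off on the three sets gives $\u=a$ a.e.\ on $\{\Phi>0\}$, $\u=b$ a.e.\ on $\{\Phi<0\}$, and (trivially) $a\le\u\le b$ on $\{\Phi=0\}$.

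These three relations are equivalent to the pointwise variational inequality
\begin{align}
\label{PW}
	\Phi(x,t)\,\big(v-\u(x,t)\big)\ge 0\qquad\text{for all }v\in[a(x,t),b(x,t)],
\end{align}
holding for almost every $(x,t)\in\OmegaT$. Dividing \eqref{PW} by $\kappa>0$ gives $\big(\u-\tfrac1\kappa\vartheta_\u\h(\varphi_\u)\big)(v-\u)\ge 0$ for all $v\in[a,b]$, which is exactly the characterization of $\u(x,t)$ as the metric projection of $\tfrac1\kappa\vartheta_\u(x,t)\h(\varphi_\u(x,t))$ onto the interval $[a(x,t),b(x,t)]$ (recall that in a Hilbert space $z=\mathbb P_C(y)$ iff $z\in C$ and $(z-y)(v-z)\ge 0$ for all $v\in C$). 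Since the one-dimensional projection onto $[a,b]$ is $\mathbb P_{[a,b]}(s)=\max\{a,\min\{b,s\}\}$, the formula \eqref{PRF} follows for almost every $(x,t)$. Conversely, any $\u$ satisfying \eqref{PRF} satisfies \eqref{PW} pointwise, and integrating \eqref{PW} against $u-\u$ for arbitrary $u\in\U$ recovers \eqref{VIQ}; this establishes that \eqref{PRF} and \eqref{VIQ} are equivalent.

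Once the reduction to \eqref{PW} is in place everything is elementary, so the only genuinely delicate step is the localization, i.e.\ forcing the integrated inequality \eqref{VIQ} to yield pointwise sign relations. I would organize the argument around the explicit competitor $u^\ast$ precisely because it settles this step cleanly and avoids invoking a general measurable selection theorem.
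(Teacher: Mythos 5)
Your proof is correct. The paper itself offers no argument for this corollary --- it simply declares it ``a standard result of optimal control theory'' --- so there is no authorial proof to compare against; what you have written is precisely the canonical derivation that such a remark points to. The localization via the explicit competitor $u^\ast$ is sound: $u^\ast$ is measurable, takes values in $[a,b]$, and is dominated by $|a|+|b|+|\u|\in L^2(\OmegaT)$, so it is admissible; the sign argument then forces $\u=a$ a.e.\ on $\{\Phi>0\}$ and $\u=b$ a.e.\ on $\{\Phi<0\}$, and the identification with the one-dimensional metric projection is the standard Hilbert-space characterization. The equivalence with \eqref{VIQ} in the converse direction is also handled correctly. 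The one point worth making explicit is your standing assumption $\kappa>0$: the paper's assumptions only require $\kappa\ge 0$, and the projection formula is meaningless for $\kappa=0$, so the corollary tacitly requires strict positivity of the Tikhonov parameter --- you were right to flag this rather than sweep it under the rug.
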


\section*{Acknowledgements}

Matthias Ebenbeck was supported by the RTG 2339 ``Interfaces, Complex Structures, and Singular Limits" of the
German Science Foundation (DFG). The support is gratefully acknowledged.

%%%%%%%%%%%%%%%%%%%%%%%%%%%%%%%%%%%%%%%%%%%%%%%%%%%%%%%%%%%%%%%%%%%%%%%%%%%%%%%%%%%%%%%%%%%%%%%%%%%%%%%%%%%%%%%%%%%%
%*******************************************************************************************************************
% END OF DOCUMENT
%*******************************************************************************************************************
%%%%%%%%%%%%%%%%%%%%%%%%%%%%%%%%%%%%%%%%%%%%%%%%%%%%%%%%%%%%%%%%%%%%%%%%%%%%%%%%%%%%%%%%%%%%%%%%%%%%%%%%%%%%%%%%%%%%

\bigskip

%%%%%%%%%%%%%%%%%%%%%%%%%%%%%%%%%%%%%%%%%%%%%%%%%%%%%%%%%%%%%%%%%%%%%%%%%%%%%%%%%%%%%%%%%%%%%%%%%%%%%%%%%%%%%%%%%%%%
%*******************************************************************************************************************
% REFERENCES
%*******************************************************************************************************************
%%%%%%%%%%%%%%%%%%%%%%%%%%%%%%%%%%%%%%%%%%%%%%%%%%%%%%%%%%%%%%%%%%%%%%%%%%%%%%%%%%%%%%%%%%%%%%%%%%%%%%%%%%%%%%%%%%%%

\footnotesize
%\nocite{*} % Show all bib entries - both cited and uncited; comment this line to view only cited bib entries;
\bibliographystyle{plain}
\bibliography{oc-chb}

\begin{thebibliography}{10}

\bibitem{AbelsTerasawa}
H.~Abels and Y.~Terasawa.
\newblock On {S}tokes operators with variable viscosity in bounded and
  unbounded domains.
\newblock {\em Math. Ann.}, 344(2):381--429, 2009.

\bibitem{AgostiEtAl}
A.~Agosti, C.~Cattaneo, C.~Giverso, D.~Ambrosi, and P.~Ciarletta.
\newblock A computational framework for the personalized clinical treatment of
  glioblastoma multiforme.
\newblock {\em ZAMM - Journal of Applied Mathematics and Mechanics /
  Zeitschrift für Angewandte Mathematik und Mechanik}, 2018.

\bibitem{amann}
H.~Amann.
\newblock {\em {Linear and Quasilinear Parabolic Problems, Volume I: Abstract
  Linear Theory}}.
\newblock Birkhäuser Basel, 1995.

\bibitem{AstaninPreziosi}
S.~Astanin and L.~Preziosi.
\newblock Multiphase models of tumour growth.
\newblock In {\em Selected topics in cancer modeling}, Model. Simul. Sci. Eng.
  Technol., pages 223--253. Birkh\"{a}user Boston, Boston, MA, 2008.

\bibitem{BearerEtAl}
E.L. Bearer, J.S. Lowengrub, H.B. Frieboes, Y.L. Chuang, F.~Jin, S.M. Wise,
  M.~Ferrari, and V.~Agus, D.B.and~Cristini.
\newblock Multiparameter computational modeling of tumor invasion.
\newblock {\em Cancer Research}, 69(10):4493--4501, 2009.

\bibitem{Benosman}
C.~Benosman, B.~A{\"i}nseba, and A.~Ducrot.
\newblock {Optimization of Cytostatic Leukemia Therapy in an
  Advection--Reaction--Diffusion Model}.
\newblock {\em Journal of Optimization Theory and Applications},
  167(1):296--325, 2015.

\bibitem{Biswas}
T.~Biswas, S.~Dharmatti, and M.T. Mohan.
\newblock {Pontryagin's maximum principle for optimal control of the nonlocal
  Cahn-Hilliard-Navier-Stokes systems in two dimensions}.
\newblock {\em ArXiv e-prints:
  \href{https://arxiv.org/abs/1802.08413}{arXiv:1802.08413}}, 2018.

\bibitem{Cavaterra}
C.~Cavaterra, E.~Rocca, and H.~Wu.
\newblock {Long-Time Dynamics and Optimal Control of a Diffuse Interface Model
  for Tumor Growth}.
\newblock {\em Appl. Math. Optim.,
  \url{https://doi.org/10.1007/s00245-019-09562-5}}, 2019.

\bibitem{CoddingtonLevinson}
E.~A. Coddington and N.~Levinson.
\newblock {\em Theory of ordinary differential equations}.
\newblock McGraw-Hill Book Company, Inc., New York-Toronto-London, 1955.

\bibitem{ColliFarshbaf}
P.~Colli, M.H. Farshbaf-Shaker, G.~Gilardi, and J.~Sprekels.
\newblock Optimal boundary control of a viscous {C}ahn-{H}illiard system with
  dynamic boundary condition and double obstacle potentials.
\newblock {\em SIAM J. Control Optim.}, 53(4):2696--2721, 2015.

\bibitem{ColliGilardiHilhorst}
P.~Colli, G.~Gilardi, and D.~Hilhorst.
\newblock On a {C}ahn-{H}illiard type phase field system related to tumor
  growth.
\newblock {\em Discrete Contin. Dyn. Syst.}, 35(6):2423--2442, 2015.

\bibitem{ColliGilardiRoccaSprekels}
P.~Colli, G.~Gilardi, E.~Rocca, and J.~Sprekels.
\newblock Optimal distributed control of a diffuse interface model of tumor
  growth.
\newblock {\em Nonlinearity}, 30(6):2518--2546, 2017.

\bibitem{ColliGilardiSprekels}
P.~Colli, G.~Gilardi, and J.~Sprekels.
\newblock Optimal velocity control of a viscous {C}ahn-{H}illiard system with
  convection and dynamic boundary conditions.
\newblock {\em SIAM J. Control Optim.}, 56(3):1665--1691, 2018.

\bibitem{ChristiniLiLowengrubWise}
V.~Cristini, X.~Li, J.~S. Lowengrub, and S.~M. Wise.
\newblock Nonlinear simulations of solid tumor growth using a mixture model:
  invasion and branching.
\newblock {\em J. Math. Biol.}, 58(4-5):723--763, 2009.

\bibitem{EbenbeckGarcke2}
M.~Ebenbeck and H.~Garcke.
\newblock {On a Cahn-Hilliard-Brinkman model for tumour growth and its singular
  limits}.
\newblock {\em SIAM J. Math. Anal.}, 51(3):1868--1912.

\bibitem{EbenbeckGarcke}
M.~Ebenbeck and H.~Garcke.
\newblock {Analysis of a Cahn–Hilliard–Brinkman model for tumour growth
  with chemotaxis}.
\newblock {\em J.~Differential~Equations}, 266(9):5998--6036, 2019.

\bibitem{FrieboesEtAl}
H.B. Frieboes, J.~Lowengrub, S.~Wise, X.~Zheng, P.~Macklin, E.~Bearer, and
  V.~Cristini.
\newblock Computer simulation of glioma growth and morphology.
\newblock {\em NeuroImage}, 37 Suppl 1:59--70, 02 2007.

\bibitem{FrigeriGrasselliRocca}
S.~Frigeri, M.~Grasselli, and E.~Rocca.
\newblock On a diffuse interface model of tumour growth.
\newblock {\em European J. Appl. Math.}, 26(2):215--243, 2015.

\bibitem{Galdi}
G.~P. Galdi.
\newblock {\em An introduction to the mathematical theory of the
  {N}avier-{S}tokes equations}.
\newblock Springer Monographs in Mathematics. Springer, New York, second
  edition, 2011.
\newblock Steady-state problems.

\bibitem{GarckeLam1}
H.~Garcke and K.~F. Lam.
\newblock Global weak solutions and asymptotic limits of a
  {C}ahn-{H}illiard-{D}arcy system modelling tumour growth.
\newblock {\em AIMS Mathematics}, 1(Math-01-00318):318--360, 2016.

\bibitem{GarckeLam3}
H.~Garcke and K.~F. Lam.
\newblock {Well-posedness of a Cahn-Hilliard system modelling tumour growth
  with chemotaxis and active transport}.
\newblock {\em European J. Appl. Math.}, 28(2):284--316, 2017.

\bibitem{GarckeLam4}
H.~Garcke and K.~F. Lam.
\newblock On a {C}ahn-{H}illiard-{D}arcy system for tumour growth with solution
  dependent source terms.
\newblock In {\em Trends in applications of mathematics to mechanics},
  volume~27 of {\em Springer INdAM Ser.}, pages 243--264. Springer, Cham, 2018.

\bibitem{GarckeLamRocca}
H.~Garcke, K.~F. Lam, and E.~Rocca.
\newblock Optimal control of treatment time in a diffuse interface model of
  tumor growth.
\newblock {\em Appl. Math. Optim.}, 78(3):495--544, 2018.

\bibitem{GarckeLam2}
H.~Garcke and K.F. Lam.
\newblock Analysis of a {C}ahn-{H}illiard system with non-zero {D}irichlet
  conditions modeling tumor growth with chemotaxis.
\newblock {\em Discrete Contin. Dyn. Syst.}, 37(8):42--77, 2017.

\bibitem{GarckeLamNuernbergSitka}
H.~Garcke, K.F. Lam, R.~N\"urnberg, and E.~Sitka.
\newblock {A multiphase {C}ahn-{H}illiard-{D}arcy model for tumour growth with
  necrosis}.
\newblock {\em Math. Models Methods Appl. Sci.}, 28(3):525--577, 2018.

\bibitem{GarckeLamSitkaStyles}
H.~Garcke, K.F. Lam, E.~Sitka, and V.~Styles.
\newblock A {C}ahn-{H}illiard-{D}arcy model for tumour growth with chemotaxis
  and active transport.
\newblock {\em Math. Models Methods Appl. Sci.}, 26(6):1095--1148, 2016.

\bibitem{GilardiSprekels}
G.~Gilardi and J.~Sprekels.
\newblock {Asymptotic limits and optimal control for the Cahn–Hilliard system
  with convection and dynamic boundary conditions}.
\newblock {\em Nonlinear Analysis}, 178:1--31, 2019.

\bibitem{HawkinsZeeKristofferOdenTinsley}
A.~Hawkins-Daarud, K.~G. van~der Zee, and J.~T. Oden.
\newblock Numerical simulation of a thermodynamically consistent four-species
  tumor growth model.
\newblock {\em Int. J. Numer. Methods Biomed. Eng.}, 28(1):3--24, 2012.

\bibitem{HilhorstKampmannNguyenZee}
D.~Hilhorst, J.~Kampmann, T.~N. Nguyen, and K.~G. Van Der~Zee.
\newblock Formal asymptotic limit of a diffuse-interface tumor-growth model.
\newblock {\em Math. Models Methods Appl. Sci.}, 25(6):1011--1043, 2015.

\bibitem{HintermuellerWegner}
M.~Hinterm\"uller and D.~Wegner.
\newblock Distributed optimal control of the {C}ahn-{H}illiard system including
  the case of a double-obstacle homogeneous free energy density.
\newblock {\em SIAM J. Control Optim.}, 50(1):388--418, 2012.

\bibitem{JiangWuZheng}
J.~Jiang, H.~Wu, and S.~Zheng.
\newblock Well-posedness and long-time behavior of a non-autonomous
  {C}ahn-{H}illiard-{D}arcy system with mass source modeling tumor growth.
\newblock {\em J. Differential Equations}, 259(7):3032--3077, 2015.

\bibitem{KahleLam}
C.~Kahle and K.F. Lam.
\newblock {Parameter Identification via Optimal Control for a
  Cahn--Hilliard-Chemotaxis System with a Variable Mobility}.
\newblock {\em Appl. Math. Optim.,
  \url{https://doi.org/10.1007/s00245-018-9491-z}}, Mar 2018.

\bibitem{SchaettlerLedzewicz2}
U.~Ledzewicz and H.~Sch\"attler.
\newblock {Multi-input optimal control problems for combined tumor
  anti-angiogenic and radiotherapy treatments}.
\newblock {\em J. Optim. Theory Appl.}, 153(1):195--224, 2012.

\bibitem{OdenTinsleyHawkins}
J.~T. Oden, A.~Hawkins, and S.~Prudhomme.
\newblock General diffuse-interface theories and an approach to predictive
  tumor growth modeling.
\newblock {\em Math. Models Methods Appl. Sci.}, 20(3):477--517, 2010.

\bibitem{Oke}
S.I. Oke, M.B. Matadi, and S.S. Xulu.
\newblock {Optimal Control Analysis of a Mathematical Model for Breast Cancer}.
\newblock {\em Mathematical and Computational Applications}, 23(2), 2018.

\bibitem{PreziosiTosin}
L.~Preziosi and A.~Tosin.
\newblock Multiphase modelling of tumour growth and extracellular matrix
  interaction: mathematical tools and applications.
\newblock {\em Journal of Mathematical Biology}, 58(4):625, Oct 2008.

\bibitem{SchaettlerLedzewicz}
H.~Sch\"attler and U.~Ledzewicz.
\newblock {\em {Optimal control for mathematical models of cancer therapies}},
  volume~42 of {\em Interdisciplinary Applied Mathematics}.
\newblock Springer, New York, 2015.
\newblock An application of geometric methods.

\bibitem{ShibataShimizu}
Y.~Shibata and S.~Shimizu.
\newblock On the {S}tokes equation with {N}eumann boundary condition.
\newblock In {\em Regularity and other aspects of the {N}avier-{S}tokes
  equations}, volume~70 of {\em Banach Center Publ.}, pages 239--250. Polish
  Acad. Sci. Inst. Math., Warsaw, 2005.

\bibitem{Signori}
A.~Signori.
\newblock {Optimal Distributed Control of an Extended Model of Tumor Growth
  with Logarithmic Potential}.
\newblock {\em Appl. Math. Optim.,
  \url{https://doi.org/10.1007/s00245-018-9538-1}}, 2018.

\bibitem{Signori2}
A.~Signori.
\newblock {Vanishing parameter for an optimal control problem modeling tumor
  growth}.
\newblock {\em ArXiv e-prints:
  \href{https://arxiv.org/abs/1903.04930}{arXiv:1903.04930}}, 2019.

\bibitem{Simon}
J.~Simon.
\newblock {Compact sets in the space $L^p(0,T,B)$}.
\newblock {\em Annali di Matematica Pura ed Applicata}, 146(1):65--96, 1986.

\bibitem{Sprekels-Wu}
J.~Sprekels and H.~Wu.
\newblock {Optimal Distributed Control of a Cahn--Hilliard--Darcy System with
  Mass Sources}.
\newblock {\em Appl. Math. Optim.,
  \url{https://doi.org/10.1007/s00245-019-09555-4}}, 2019.

\bibitem{Swan}
G.W. Swan.
\newblock {Role of optimal control theory in cancer chemotherapy}.
\newblock {\em Mathematical Biosciences}, 101(2):237 -- 284, 1990.

\bibitem{triebel}
H.~Triebel.
\newblock {\em {Interpolation Theory, Function Spaces, Differential
  Operators}}.
\newblock North-Holland Publishing Company, 1978.

\bibitem{troeltzsch}
F.~Tröltzsch.
\newblock {\em {Optimal Control of Partial Differential Equations: Theory,
  Methods and Applications, Graduate Studies in Mathematics (Vol. 112)}}.
\newblock Amer. Math. Soc., 2010.

\bibitem{ZhaoLiu1}
X.~Zhao and C.~Liu.
\newblock Optimal control of the convective {C}ahn-{H}illiard equation.
\newblock {\em Appl. Anal.}, 92(5):1028--1045, 2013.

\bibitem{ZhaoLiu2}
X.~Zhao and C.~Liu.
\newblock Optimal control for the convective {C}ahn-{H}illiard equation in 2{D}
  case.
\newblock {\em Appl. Math. Optim.}, 70(1):61--82, 2014.

\end{thebibliography}

%\clearpage

\end{document}